\renewcommand{\bar}[1]{\overline{#1}}
\newcommand{\double}[1]{\widehat{#1}} %Double of M.
\newcommand\Ad{\mathrm{Ad}}
\newcommand{\eps}{\varepsilon}
\newcommand{\rest}[1]{\big\rvert_{#1}} % restriction e.g. to boundary
\newcommand{\df}[1]{\mathfrak{#1}}
\newcommand{\lrpar}[1]{\left( #1 \right)}
\renewcommand{\bar}[1]{\overline{#1}}
\newcommand\boxb[1]{\square_b}
\newcommand\wt{\widetilde}
\newcommand\ff{\operatorname{ff}}
\newcommand\paperintro%
\newcommand\paperbody%
\newtheorem{theorem}{Theorem}
\newtheorem{corollary}[theorem]{Corollary}
\newtheorem{lemma}[theorem]{Lemma}
\newtheorem{proposition}[theorem]{Proposition}
\numberwithin{equation}{section}
\numberwithin{theorem}{section}
\newtheorem{non-theorem}{Non-Theorem}
\theoremstyle{remark}
\newtheorem{definition}[theorem]{Definition}
\newtheorem{remark}[theorem]{Remark}
\newcommand\bV{\mathcal{V}_{\operatorname{b}}}
\newcommand\cFTs{{}^{\Phi}\overline{T}\kern-1pt{}^*}
\newcommand\cB{\mathcal{B}}
\newcommand\cF{\mathcal F}
\newcommand\cG{\mathcal G}
\newcommand\cH{\mathcal{H}}
\newcommand\cI{\mathcal{I}}
\newcommand\cJ{\mathcal{J}}
\newcommand\cL{\mathcal{L}}
\newcommand\cM{\mathcal{M}}
\newcommand\cS{\mathcal S}
\newcommand\cU{\mathcal{U}}
\newcommand\bbR{\mathbb R}
\newcommand\bbS{\mathbb S}
\newcommand\bbZ{\mathbb Z}
\newcommand\CI{{\mathcal{C}}^{\infty}}
\newcommand\CO{{\mathcal{C}}^{0}}
\newcommand\cFNs{{}^{\Phi}\overline N\kern-1pt{}^*}
\newcommand\Id{\operatorname{Id}}
\newcommand\ci{${\mathcal{C}}^\infty$}
\newcommand\clos{\operatorname{cl}}
\newcommand\codim{\operatorname{codim}}
\newcommand\supp{\operatorname{supp}}
\newcommand\Mand{\text{ and }}
\newcommand\Mforall{\text{ for all }}
\newcommand\Mforevery{\text{ for every }}
\newcommand\Mif{\text{ if }}
\newcommand\Min{\text{ in }}
\newcommand\Mnear{\text{ near }}
\newcommand\Mon{\text{ on }}
\newcommand\Mor{\text{ or }}
\newcommand\Mst{\text{ s.t. }}
\newcommand\tb{\operatorname{tb}}
\begin{document}
\title{Resolution of smooth group actions}
%lasteqno Eqcores@236
\author{Pierre Albin and Richard Melrose}
\address{Department of Mathematics, Massachusetts Institute of Technology\newline
current address: Institut de Math\'ematiques de Jussieu}
\email{albin@math.jussieu.fr}
\address{Department of Mathematics, Massachusetts Institute of Technology}
\email{rbm@math.mit.edu}
%\dedicatory{Version \datver}
\thanks{The first author was partially supported by an NSF postdoctoral
  fellowship and NSF grant DMS-0635607002 and the second author received 
  partial support under NSF grant DMS-1005944.}

\begin{abstract} A refined form of the `Folk Theorem' that a smooth action
  by a compact Lie group can be (canonically) resolved, by iterated blow
  up, to have unique isotropy type is proved in the context of manifolds
  with corners. This procedure is shown to capture the simultaneous
  resolution of all isotropy types in a `resolution structure' consisting
  of equivariant iterated fibrations of the boundary faces. This structure
  projects to give a similar resolution structure for the quotient. In
  particular these results apply to give a canonical resolution of the
  radial compactification, to a ball, of any finite dimensional
  representation of a compact Lie group; such resolutions of the normal
  action of the isotropy groups appear in the boundary fibers in the
  general case.
\end{abstract}

\maketitle

\tableofcontents

%%%%%%%%%%%%%%%%%%%%%%%%%%%%%%%%%%%%%%%%%%%
\paperintro
\section*{Introduction} \label{Intro}
%%%%%%%%%%%%%%%%%%%%%%%%%%%%%%%%%%%%%%%%%%%

Borel showed that if the isotropy groups of a smooth action by a compact
Lie group, $G,$ on a compact manifold, $M,$ are all conjugate then the
orbit space, $G\backslash M,$ is smooth. Equivariant objects on $M,$ for
such an action, can then be understood directly as objects on the
quotient. In the case of a free action, which is to say a principal
$G$-bundle, Borel showed that the equivariant cohomology of $M$ is then
naturally isomorphic to the cohomology of $G \backslash M.$ In a companion
paper, \cite{Albin-Melrose:eqcores}, this is extended to the unique
isotropy case to show that the equivariant cohomology of $M$ reduces to the
cohomology of $G \backslash M$ with coefficients in a flat bundle (the
Borel bundle). In this paper we show how, by resolution, a general smooth
compact group action on a compact manifold is related to an action with
unique isotropy type on a resolution, canonically associated to the given
action, of the manifold to a compact manifold with corners.

The resolution of a smooth Lie group action is discussed by Duistermaat and
Kolk \cite{Duistermaat-Kolk} (which we follow quite closely), by Kawakubo
\cite{Kawakubo} and by Wasserman \cite{Wasserman} but goes back at least as
far as J\"anich \cite{Janich}, Hsiang \cite{Hsiang}, and Davis
\cite{Davis}. See also the discussion by Br\"uning, Kamber and Richardson
\cite{Richardson} which appeared after the present work was complete. In
these approaches there are either residual finite group actions,
particularly reflections, as a consequence of the use of real projective
blow up or else the manifold is repeatedly doubled. Using radial blow up,
and hence working in the category of manifolds with corners, such problems
do not arise.

For a general group action, $M$ splits into various isotropy types
\begin{equation*}
M^{[K]} = \{ \zeta \in M: G_{\zeta} \text{ is conjugate to } K \},\
	G_\zeta = \{ g \in G: g \zeta = \zeta \},\ \zeta \in M.
\end{equation*}
These are smooth manifolds but not necessarily closed and the orbit space
is then in general singular. We show below that each $M^{[K]}$ has a
natural compactification to a manifold with corners, $Y_{[K]},$ the
boundary hypersurfaces of which carry equivariant fibrations with bases the
compactifications of the isotropy types contained in the closure of
$M^{[K]}$ and so corresponding to larger isotropy groups. Each
fiber of these fibrations is the canonical resolution of the normal action
of the larger isotropy group. These fibrations collectively give what we
term a \emph{resolution structure}, $\{(Y_I,\phi_I);I\in\cI\},$ the index
set being the collection of conjugacy classes of isotropy groups, i.e. of
isotropy types, of the action. If $M$ is connected there is always a
minimal `open' isotropy type $\mu\in\cI,$ for which the corresponding
manifold, $Y_{\mu}=Y(M),$ (possibly not connected) gives a resolution of
the action on $M.$ That is, there is a smooth $G$-action on $Y(M)$ with
unique isotropy type and a smooth $G$-equivariant map
\begin{equation}
\beta :Y(M)\longrightarrow M
\label{Eqcores.220}\end{equation}
which is a diffeomorphism of the interior of $Y(M)$ to the minimal isotropy
type. Here, $\beta$ is the iterated blow-down map for the
resolution. There is a $G$-invariant partition of the boundary
hypersurfaces of $Y(M)$ into non-self-intersecting collections $H_I,$
labelled by the non-minimal isotropy types $I\in\cI\setminus\{\mu\},$ and
carrying $G$-equivariant fibrations
\begin{equation}
\phi_I:H_I\longrightarrow Y_I.
\label{Eqcores.222}\end{equation}
Here $Y_I$ resolves the space $M_I,$ the closure of the corresponding
isotropy type $M^I,$
\begin{equation}
\beta _I:Y_I\longrightarrow M_I,\ \beta \big|_{H_I}=\beta_I\circ\phi_I.
\label{Eqcores.221}\end{equation}

Thus the inclusion relation between the $M_I$ corresponding to the
stratification of $M$ by isotropy types, is `resolved' into the
intersection relation between the $H_I.$ The resolution structure for $M,$ thought
of as the partition of the boundary hypersurfaces with each collection
carrying a fibration, naturally induces a resolution structure for
each $Y_I.$ Since the fibrations are equivariant the quotients $Z_I$ of the
$Y_I$ by the group action induce a similar resolution structure on the
quotient $Z(M)$ of $Y(M)$ which resolves the quotient, the orbit space,
$G\backslash M.$

As noted above, in a companion paper \cite{Albin-Melrose:eqcores}, various
cohomological consequences of this construction are derived. The `lifts' of
both the equivariant cohomology and equivariant K-theory of a manifold with
a group action to its resolution structure are described. These lifted
descriptions then project to corresponding realizations of these theories
on the resolution structure for the quotient. As a consequence of the forms
of these resolved and projected theories a `delocalized' equivariant
cohomology is defined, and shown to reduce to the cohomology of Baum,
Brylinski and MacPherson in the Abelian case in \cite{MR791098}. The
equivariant Chern character is then obtained from the usual Chern
character by twisting with flat coefficients and establishes an isomorphism 
between equivariant K-theory with complex coefficients and delocalized 
equivariant cohomology. Applications to equivariant index theory will be 
described in \cite{Albin-Melrose:eqindres}.

For the convenience of the reader a limited amount of background
information on manifolds with corners and blow up is included in the first
two sections. The abstract notion of a resolution structure on a manifold
with corners is discussed in \S\ref{Ifs} and the basic properties of
G-actions on manifolds with corners are described in \S\ref{Gacts}. The
standard results on tubes and collars are extended to this case in
\S\ref{Collar}. In \S\ref{sec:BounRes} it is shown that for a general
action the induced action on the set of boundary hypersurfaces can be
appropriately resolved. The canonical resolution itself is then presented in
\S\ref{sec:Resolution}, including some simple examples, and the induced
resolution of the orbit space is considered in \S\ref{Quot}. Finally 
\S\ref{sec:Maps} describes the resolution of an equivariant embedding and the
`relative' resolution of the total space of an equivariant fibration.

The authors are grateful to Eckhard Meinrenken for very helpful comments on
the structure of group actions, and to an anonymous referee for remarks improving
the exposition.

%%%%%%%%%%%%%%%%%%%%%%%%%%%%%%%%%%%%%%%%%%%
\paperbody
\section{Manifolds with corners}\label{mwc}
%%%%%%%%%%%%%%%%%%%%%%%%%%%%%%%%%%%%%%%%%%%

By a \emph{manifold with corners,} $M,$ we shall mean a topological manifold
with boundary with a covering by coordinate charts 
\begin{equation}
M=\bigcup_jU_j,\ F_j:U_j\longrightarrow U_j'\subset\bbR^{m,\ell} =
[0,\infty)^\ell \times \bbR^{m-\ell}, 
\label{Eqcores.1}\end{equation}
where the $U_j$ and $U'_j$ are (relatively) open, the $F_j$ are
homeomorphisms and the transition maps 
\begin{equation}
F_{ij}:F_i(U_i\cap U_j)\longrightarrow F_j(U_i\cap U_j),\
U_i\cap U_j\not=\emptyset
\label{Eqcores.2}\end{equation}
are required to be smooth in the sense that all derivatives are bounded on
compact subsets; an additional condition is imposed below. The ring of
smooth functions $\CI(M)\subset\CO(M)$ is fixed by requiring
$(F_j^{-1})^*(u\big|_{U_j})$ to be smooth on $U_j',$ in the sense that it
is the restriction to $U_j'$ of a smooth function on an open subset of
$\bbR^m.$ 

The part of the boundary of smooth codimension one, which is the union of
the inverse images under the $F_i$ of the corresponding parts of the
boundary of the $\bbR^{m,\ell},$ is dense in the boundary and the closure
of each of its components is a \emph{boundary hypersurface} of $M.$ More
generally we shall call a finite union of non-intersecting boundary
hypersurfaces a \emph{collective boundary hypersurface}.  We shall insist, as part
of the definition of a manifold with corners, that these boundary
hypersurfaces each be \emph{embedded,} meaning near each point of each of
these closed sets, the set itself is given by the vanishing of a local
smooth defining function $x$ which is otherwise positive and has
non-vanishing differential at the point. In the absence of this condition
$M$ is a \emph{tied manifold.} It follows that each collective boundary
hypersurface, $H,$ of a manifold with corners is globally the zero set of a
smooth, otherwise positive, \emph{boundary defining function}
$\rho_H\in\CI(M)$ with differential non-zero on $H;$ conversely $H$
determines $\rho _H$ up to a positive smooth multiple. The set of connected
boundary hypersurfaces is denoted $\cM_1(M)$ and the \emph{boundary faces}
of $M$ are the \emph{components} of the intersections of elements of
$\cM_1(M).$ We denote by $\cM_k(M)$ the set of boundary faces of
codimension $k.$ Thus if $F\in\cM_k(M)$ and $F'\in\cM_{k'}(M)$ then $F\cap
F'$ can be identified with the union over the elements of a subset
(possibly empty of course) which we may denote $F\cap
F'\subset\cM_{k+k'}(M).$ Once again it is convenient to call a subset of
$\cM_k(M)$ with non-intersecting elements a collective boundary face, and then 
the collection of intersections of the elements of two collective boundary faces
\emph{is} a collective boundary face.

\begin{figure}[htpb]
	\centering
\includegraphics{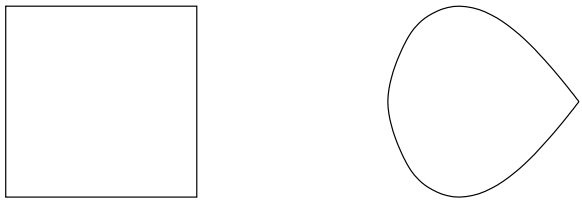}
\caption{The square is a manifold with corners. The teardrop is only a tied
  manifold since its boundary hypersurface intersects itself.}
\end{figure}

By a manifold from now on we shall mean a manifold with corners, so the
qualifier will be omitted except where emphasis seems appropriate. The
traditional object will be called a boundaryless manifold.

As a consequence of the assumption that the boundary hypersurfaces are
embedded, each boundary face of $M$ is itself a manifold with corners (for
a tied manifold the boundary hypersurfaces are more general objects, namely
\emph{articulated manifolds} which have boundary faces identified). At each
point of a manifold with corners there are, by definition, \emph{local
  product coordinates} $x_i\ge0,$ $y_j$ where $1\le i\le k$ and $1\le j\le
m-k$ (and either $k$ or $m-k$ can be zero) and the $x_i$ define the
boundary hypersurfaces through the point. Unless otherwise stated, by local
coordinates we mean local product coordinates in this sense. The local
product structure near the boundary can be globalized:-

\begin{definition}\label{Eqcores.123A} On a compact manifold with corners,
  $M,$ a \emph{boundary product structure} consists of a choice $\rho
  _H\in\CI(M)$ for each $H\in\cM_1(M),$ of a defining function for the
  each of the boundary hypersurfaces, an open neighborhood $U_H\subset M$
  of each $H\in\cM_1(M)$ and a smooth vector field $V_H$ defined in each $U_H$
  such that
\begin{equation}
\begin{gathered}
V_H\rho_K=\begin{cases}1&\Min U_H\Mif K=H\\0&\Min U_H\cap U_K\Mif K\not=H,
\end{cases}\\
[V_H,V_K]=0\Min U_H\cap U_K\ \forall\ H,K\in\cM_1(M).
\end{gathered}
\label{Eqcores.103}\end{equation}
\end{definition}
\noindent Integration of each $V_H$ from $H$ gives a product decomposition
of a neighborhood of $H$ as $[0,\epsilon _H]\times H,$ $\epsilon _H>0$ in
which $V_H$ is differentiation in the parameter space on which $\rho_H$
induces the coordinate. Shrinking $U_H$ allows it to be identified with
such a neighborhood without changing the other properties
\eqref{Eqcores.103}. Scaling $\rho _H$ and $V_H$ allows the parameter range
to be taken to be $[0,1]$ for each $H.$

\begin{proposition}\label{Eqcores.102} Every compact manifold has a
  boundary product structure.
\end{proposition}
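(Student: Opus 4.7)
The plan is to prove the proposition by induction on $N = |\cM_1(M)|$, building the triples $(\rho_{H_k}, U_{H_k}, V_{H_k})$ one at a time so that the partial collection always satisfies the compatibility conditions \eqref{Eqcores.103} pairwise among the data constructed so far. For $N = 0$ there is nothing to prove; when $k = 1$ the classical collar neighborhood theorem, applied to $H_1$ viewed as a submanifold with corners of $M$, produces a diffeomorphism $c_1 : [0, \epsilon_1) \times H_1 \hookrightarrow M$ onto a neighborhood $U_{H_1}$, and I would take $\rho_{H_1} := t \circ c_1^{-1}$ (extended off $U_{H_1}$ as a smooth positive function) and $V_{H_1} := (c_1)_* \partial_t$; the cross conditions in \eqref{Eqcores.103} are vacuous.

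For the inductive step with $(\rho_{H_j}, U_{H_j}, V_{H_j})_{j < k}$ already constructed, first produce a candidate $(\rho_{H_k}^{(0)}, U_{H_k}^{(0)}, V_{H_k}^{(0)})$ via a collar for $H_k$ as above, then modify it near each intersection $H_j \cap H_k$, $j < k$. The flow of $V_{H_j}$ trivializes $U_{H_j}$ as $[0, \epsilon_j) \times H_j$ with $V_{H_j} = \partial_{t_j}$; within this product decomposition I replace $\rho_{H_k}^{(0)}$ by the pullback of its restriction to $H_j$ along the projection $\pi_j:[0, \epsilon_j) \times H_j \to H_j$, and similarly replace $V_{H_k}^{(0)}$ by the horizontal lift (constant in $t_j$) of its tangential projection onto $H_j$, the latter obtained after subtracting off the normal component $(V_{H_k}^{(0)}\rho_{H_j})\, V_{H_j}$. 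The updated $\rho_{H_k}$ remains a defining function for $H_k$ and satisfies $V_{H_j}\rho_{H_k} = 0$, while the updated $V_{H_k}$ is tangent to $H_j$, commutes with $V_{H_j}$, and still satisfies $V_{H_k}\rho_{H_k} = 1$ by a direct computation.

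The main obstacle is the simultaneous compatibility of these modifications across different $j < k$: the adjustment relative to $H_{j_1}$ must not destroy the analogous condition relative to $H_{j_2}$. This is resolved by the inductive hypothesis $[V_{H_{j_1}}, V_{H_{j_2}}] = 0$, which implies that the commuting flows jointly trivialize $U_{H_{j_1}} \cap U_{H_{j_2}}$ as $[0, \epsilon_{j_1}) \times [0, \epsilon_{j_2}) \times (H_{j_1} \cap H_{j_2})$ and the pullback operations $\pi_{j_1}^{*}$ and $\pi_{j_2}^{*}$ commute; the two local modifications then agree on the overlap because both reduce to pulling back from the common corner $H_{j_1} \cap H_{j_2} \cap H_k$. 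Iterating this principle over subsets of $\{j < k : H_j \cap H_k \ne \emptyset\}$ and gluing by a partition of unity whose bump functions are chosen invariant under the flows of the $V_{H_j}$ (so that convex combinations retain the commutation relations) produces a global $(\rho_{H_k}, U_{H_k}, V_{H_k})$, completing the induction. The subsequent rescaling to parameter range $[0,1]$ discussed after the proposition is then immediate.
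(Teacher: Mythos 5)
Your strategy is the same as the paper's: induct over the boundary hypersurfaces, correct the new normal data near each corner $H_j\cap H_k$ by propagating along the flows of the previously constructed $V_{H_j}$ (your ``horizontal lift constant in $t_j$'' is exactly the paper's requirement $[V_{H_j},V_{H_k}]=0$), use the commutativity of the earlier fields to reconcile the corrections coming from different $j$, and patch with a partition of unity. The only structural difference is bookkeeping: the paper fixes all the defining functions at the outset and builds each $V_H$ outward from the corners by an inner induction, whereas you start from a global collar candidate and then project and average; that variant is legitimate.

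There are, however, two concrete gaps in the inductive step as written. First, for $\pi_j^*\bigl(\rho_{H_k}^{(0)}\big\rvert_{H_j}\bigr)$ to be a defining function for $H_k$ you need the flow of $V_{H_j}$ to preserve $H_k$, i.e. $V_{H_j}$ must be tangent to $H_k$; this is not implied by your inductive hypothesis, which only records the conditions \eqref{Eqcores.103} among the data already constructed ($j<k$) and says nothing about the not-yet-processed hypersurfaces. You must carry along the extra hypothesis that each $V_{H_j}$ is tangent to every boundary hypersurface other than $H_j$ (easily maintained, and implicit in the paper's formulation since all the $\rho_H$ are fixed in advance). Second, the ``direct computation'' of $V_{H_k}\rho_{H_k}$ after the modification does not give $1$: writing $W=V_{H_k}^{(0)}-(V_{H_k}^{(0)}\rho_{H_j})V_{H_j}$, one finds $W\rho_{H_k}^{(0)}=1-(V_{H_k}^{(0)}\rho_{H_j})(V_{H_j}\rho_{H_k}^{(0)})$ on $H_j$, and the cross term vanishes only on $H_j\cap H_k$, not on a neighborhood. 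This is repairable either by choosing the initial collar adapted to the corner (so $V_{H_k}^{(0)}$ is tangent to the other faces, killing the first factor) or by a final normalization $V_{H_k}\mapsto V_{H_k}/(V_{H_k}\rho_{H_k})$, whose factor is $V_{H_j}$-invariant and hence preserves the commutation relations; but as stated the claim is false. Relatedly, the two single-step pullbacks $\pi_{j_1}^*$ and $\pi_{j_2}^*$ of $\rho_{H_k}^{(0)}$ do not literally agree on overlaps; the correct assertion, which the commutativity of the flows does deliver, is that the modifications can be performed successively and each preserves the conditions arranged by the others.
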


\begin{proof} The construction of the neighborhoods $U_H$ and normal vector
  fields $V_H$ will be carried out inductively. For the inductive step it
  is convenient to consider a strengthened hypothesis. Note first that the
  data in \eqref{Eqcores.103} induces corresponding data on each boundary
  face $F$ of $M$ -- where the hypersurfaces containing $F$ are dropped,
  and for the remaining hypersurfaces the neighborhoods are intersected
  with $F$ and the vector fields are restricted to $F$ -- to which they are
  necessarily tangent. It may be necessary to subdivide the neighborhoods
  if the intersection $F\cap H$ has more than one component. In particular
  this gives data as in \eqref{Eqcores.103} but with $M$ replaced by $F.$
  So such data, with $M$ replaced by one of its hypersurfaces, induces data
  on all boundary faces of that hypersurface. Data as in
  \eqref{Eqcores.103} on a collection of boundary hypersurfaces of a
  manifold $M,$ with the defining functions $\rho_H$ fixed, is said to be
  consistent if all restrictions to a given boundary face of $M$ are the
  same.

Now, let $\cB\subset\cM_1(M)$ be a collection of boundary hypersurfaces
of a manifold $M,$ on which boundary defining functions $\rho _H$ have been
chosen for each $H\in\cM_1(M),$ and suppose that neighborhoods $U_K$ and
vector fields $V_K$ have been found satisfying \eqref{Eqcores.103} for all
$K\in\cB.$ If $H\in\cM_1(M)\setminus\cB$ then we claim that there is a
choice of $V_H$ and $U_H$ such that \eqref{Eqcores.103} holds for all
boundary hypersurfaces in $\cB\cup\{H\},$ with the neighborhoods possibly
shrunk. To see this we again proceed inductively, by seeking $V_H$ only on the
elements of a subset $\cB'\subset\cB$ but consistent on all common boundary
faces. The subset $\cB'$ can always be increased, since the addition of
another element of $\cB\setminus\cB'$ to $\cB'$ requires the same inductive
step but in lower overall dimension, which we can assume already
proved. Thus we may assume that $V_H$ has been constructed consistently on
all elements of $\cB.$ Using the vector fields $V_K,$ each of which is
defined in the neighborhood $U_K$ of $K,$ $V_H$ can be extended, locally
uniquely, from the neighborhood of $K\cap H$ in $K$ on which it is defined
to a neighborhood of $K\cap H$ in $M$ by demanding 
\begin{equation}
\cL_{V_K}V_H=[V_K,V_H]=0.
\label{Eqcores.104}\end{equation}
The commutation condition and other identities follow from this and the
fact that they hold on $K.$ Moreover, the fact that the $V_K$ commute in
the intersections of the $U_K$ means that these extensions of $V_H$ are
consistent for different $K$ on their common domains. In this way $V_H$
satisfying all conditions in \eqref{Eqcores.103} has been constructed in a
neighborhood of the part of the boundary of $H$ in $M$ corresponding to
$\cB.$ In the complement of this part of the boundary one can certainly
choose $V_H$ to satisfy $V_H\rho_H=1$ and combining these two choices using
a partition of unity (with two elements) gives the desired additional
vector field $V_H$ once the various neighborhoods $U_K$ are shrunk.

Thus, after a finite number of steps the commuting normal vector fields
$V_K$ are constructed near each boundary hypersurface.
\end{proof}

Note that this result is equally true if in the definition the set of
boundary hypersurfaces is replaced with any partition into collective
boundary hypersurfaces, however it is crucial that the different
hypersurfaces in each collection do not intersect.

The existence of such normal neighborhoods of the boundary hypersurfaces
ensures the existence of `product-type' metrics. That is, one can choose a
metric $g$ globally on $M$ which near each boundary hypersurface $H$ is of
the form $d\rho _H^2+\phi_H^*h_H$ where $\phi_H:U_H\longrightarrow H$ is
the projection along the integral curves of $V_H$ and $h_H$ is a metric,
inductively of the same product-type, on $H.$ Thus near a boundary face
$F\in\cM_k(M),$ which is defined by $\rho_{H_i},$ $i=1,\dots,k,$ the metric
takes the form
\begin{equation}
g=\sum\limits_{i=1}^kd\rho_{H_i}^2+\phi_F^*h_F
\label{Eqcores.106}\end{equation}
where $\phi_F$ is the local projection onto $F$ with leaves the integral
surfaces of the $k$ commuting vector fields $V_{H_i}.$ In particular

\begin{corollary}\label{Eqcores.105} On any manifold with corners there
  exists a metric $g,$ smooth and non-degenerate up to all boundary faces,
  for which the boundary faces are each totally geodesic.
\end{corollary}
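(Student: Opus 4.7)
The plan is to build the metric inductively using the boundary product structure supplied by Proposition~\ref{Eqcores.102}, and then to verify directly from the local normal form that every boundary face is totally geodesic.

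First I would fix a boundary product structure $\{(\rho _H,U_H,V_H)\}_{H\in\cM_1(M)}$ as in Definition~\ref{Eqcores.123A}, so that the vector fields $V_H$ all commute, satisfy $V_H\rho _K=\delta _{HK}$ on their common domains, and generate commuting flows identifying $U_H$ with a collar $[0,\epsilon _H)_{\rho _H}\times H.$ The argument then proceeds by induction on the depth of the manifold, i.e.\ on $d(M)=\max\{k:\cM_k(M)\not=\emptyset\}.$ The base case is a boundaryless manifold, where any smooth Riemannian metric satisfies the conclusion vacuously. For the inductive step, each $H\in\cM_1(M)$ is itself a manifold with corners of strictly smaller depth, and as pointed out in the proof of Proposition~\ref{Eqcores.102}, the boundary product structure on $M$ restricts to one on $H.$ I would apply the inductive hypothesis to obtain a product-type metric $h_H$ on each $H$ whose boundary faces are totally geodesic, doing this simultaneously for all $H$ in such a way that on any common boundary face $H\cap K$ the metrics $h_H$ and $h_K$ induce the same metric -- this compatibility is possible because the induction can be carried out on the partially ordered set of all boundary faces at once, just as the vector fields in Proposition~\ref{Eqcores.102} were constructed consistently.

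Next I would define, in each collar $U_H,$ the candidate metric
\begin{equation*}
g_H=d\rho _H^2+\phi _H^*h_H,
\end{equation*}
where $\phi _H:U_H\longrightarrow H$ is the projection along the flow of $V_H.$ The crucial compatibility is that on an overlap $U_H\cap U_K$ the commutativity $[V_H,V_K]=0,$ together with the matching of the induced metrics on $H\cap K,$ forces $g_H$ and $g_K$ to have the common local form $d\rho _H^2+d\rho _K^2+\phi _{H\cap K}^*h_{H\cap K}$ in a neighborhood of the corner, and in particular agree there. I would then glue using a partition of unity $\{\chi _H\}\cup\{\chi _{\mathrm{int}}\}$ subordinate to $\{U_H\}\cup\{M\setminus\pa M\},$ taking $g=\chi _{\mathrm{int}}g_{\mathrm{int}}+\sum _H\chi _Hg_H$ for an arbitrary interior metric $g_{\mathrm{int}}.$ The key point is that inside a sufficiently small neighborhood of a boundary face $F\in\cM_k(M)$ defined by $\rho _{H_1},\dots,\rho _{H_k},$ only the $\chi _{H_i}$ with $F\subset H_i$ are nonzero, and all of the $g_{H_i}$ already share the product normal form
\begin{equation*}
\sum _{i=1}^kd\rho _{H_i}^2+\phi _F^*h_F,
\end{equation*}
so the convex combination has the same normal form -- this gives precisely equation~\eqref{Eqcores.106}.

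Finally, total geodesy of each boundary face $F$ is read off from the normal form: in the local coordinates $(\rho _{H_1},\dots,\rho _{H_k},y)$ with $y\in F,$ the Christoffel symbols mixing a $\rho $-index with two $y$-indices vanish because the metric splits orthogonally and no cross terms depend on the $\rho _{H_i},$ so the second fundamental form of $F=\{\rho _{H_1}=\dots=\rho _{H_k}=0\}$ vanishes.

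The main obstacle is the compatibility of the inductively produced metrics $h_H$ on adjacent hypersurfaces. A naive induction which constructs $h_H$ one hypersurface at a time can easily yield metrics whose restrictions to $H\cap K$ disagree, which would destroy the product normal form at corners after gluing. The fix is to strengthen the inductive hypothesis, exactly as in Proposition~\ref{Eqcores.102}, so that product-type metrics are built consistently on the whole boundary at each stage; the commutativity of the $V_H$ is what makes this simultaneous construction possible.
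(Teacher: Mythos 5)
Your argument is essentially the paper's own: the corollary is deduced from the inductive construction of a product-type metric out of the boundary product structure of Proposition~\ref{Eqcores.102}, with the local normal form \eqref{Eqcores.106} near each face giving total geodesy. The paper leaves the gluing and the corner-compatibility of the inductively chosen $h_H$ implicit, and your proposal correctly fills in exactly those details without changing the route.
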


A diffeomorphism of a manifold sends boundary faces to boundary faces --
which is to say there is an induced action on $\cM_1(M).$

\begin{definition}\label{Eqcores.107} A diffeomorphism $F$ of a manifold
  $M$ is said to be \emph{boundary intersection free} if for each
  $H\in\cM_1(M)$ either $F(H)=H$ or $F(H)\cap H=\emptyset.$ More generally
  a collection $\cG$ of diffeomorphisms is said to be boundary intersection
  free if $\cM_1(M)$ can be partitioned into collective boundary
  hypersurfaces $B_i\subset\cM_1(M),$ so the elements of each $B_i$ are
  disjoint, such that the induced action of each $F\in\cG$ preserves the
  partition, i.e. maps each $B_i$ to itself. 
\end{definition}

A manifold with corners, $M,$ can always be realized as an embedded
submanifold of a boundaryless manifold. As shown in \cite{daomwc}, if
$\cF\subset\cM_1(M)$ is any disjoint collection of boundary hypersurfaces
then the `double' of $M$ across $\cF,$ meaning $2_{\cF}M=M\sqcup M/\cup\cF$
can be given (not however naturally) the structure of a smooth manifold
with corners. If $\{ \cF_1, \ldots \cF_\ell \}$ is a partition of the boundary of $M$ into 
disjoint collections, then it induces a partition $\{ \wt \cF_2, \ldots \wt \cF_\ell \}$ of the boundary of $2_{\cF_1}M$ with one less element.
After a finite number of steps, the iteratively doubled manifold is
boundaryless and $M$ may be identified with the image of one of the
summands (see Theorem \ref{DoublingThm}).

\begin{figure}[htpb]
\centering
\includegraphics{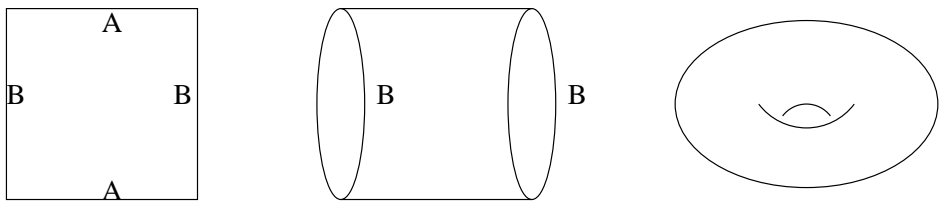}
\caption{After doubling the boundaries marked $A$ and then doubling the boundaries marked $B$ we end up with a torus.}
\end{figure}

%%%%%%%%
\section{Blow up}\label{Blowup}
%%%%%%%%

A subset $X\subset M$ of a manifold (with corners) is said to be a
\emph{p-submanifold} if at each point of $X$ there are local (product)
coordinates for $M$ such that $X\cap U,$ where $U$ is the coordinate
neighborhood, is the common zero set of a subset of the coordinates.  An
\emph{interior p-submanifold} is a p-submanifold no component of which is
contained in the boundary of $M.$ 

\begin{figure}[htpb]
	\centering
	\includegraphics{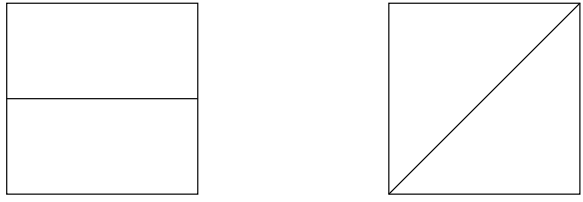}
	\caption{A horizontal line is an interior p-submanifold of the
          square. The diagonal in a product of manifolds with boundary is
          not a p-submanifold.}
\end{figure}

A p-submanifold of a manifold is itself a manifold with corners, and the 
collar neighborhood theorem holds in this context. Thus the normal bundle
to $X$ in $M$ has (for a boundary p-submanifold) a well-defined 
inward-pointing subset, forming a submanifold with corners $N^+X\subset NX$
(defined by the non-negativity of all $d\rho_H$ which vanish on the 
submanifold near the point) and, as in the boundaryless case, the 
exponential map, but here for a product-type metric, gives a diffeomorphism 
of a neighborhood of the zero section with a neighborhood of $X:$
\begin{equation}
T:N^+X\supset U' \longrightarrow U \subset M.
\label{Eqcores.117}\end{equation}
The radial vector field on $N^+X$ induces a vector field $R$ near $X$ which
is tangent to all boundary faces.

\begin{proposition}\label{Eqcores.118} If $X$ is a closed p-submanifold in a
  compact manifold then the boundary product structure in
  Proposition~\ref{Eqcores.102}, for any choice of boundary defining
  functions, can be chosen so that $V_H$ is tangent to $X$ unless $X$ is
  contained in $H.$
\end{proposition}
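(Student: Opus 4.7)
The plan is to redo the inductive construction in the proof of Proposition~\ref{Eqcores.102}, carrying along the additional requirement that $V_H$ be tangent to $X$ whenever $X\not\subset H$. The local point is this: if $p\in X\cap H$ and $X\not\subset H$, then the p-submanifold property gives local product coordinates at $p$ in which $X$ is the common zero set of a subset of the coordinates. One of the $x_i$ defines $H$, and after replacing it by $\rho_H$ (a legitimate coordinate change since $\rho_H/x_i$ is smooth and positive) we may assume $x_1=\rho_H$; since $X\not\subset H$, the coordinate $x_1$ is not among those defining $X$. Hence $\partial/\partial x_1$ is simultaneously tangent to $X$ and satisfies $(\partial/\partial x_1)\rho_H=1$.

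With this local normal form in hand, revisit the induction. Suppose vector fields $V_K$ for $K\in\cB\subset\cM_1(M)$ have been built satisfying \eqref{Eqcores.103}, together with tangency to $X$ for all $K\not\supset X$. For a new hypersurface $H\in\cM_1(M)\setminus\cB$ with $X\not\subset H$, first construct $V_H$ on the faces $K\cap H$ by applying the same recursion to $K$ and its p-submanifold $X\cap K$; this yields $V_H$ along $\bigcup_{K\in\cB}(K\cap H)$ that is tangent to $X\cap K$, and hence (by $T_p(X\cap K)\subset T_pX$) to $X$. Now extend $V_H$ off of $K$ using $[V_K,V_H]=0$, i.e.\ by transporting via the flow of $V_K$. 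If $X\not\subset K$ then $V_K$ is tangent to $X$ by induction, so its flow preserves $X$ and the pushed-forward $V_H$ remains tangent to $X$. If $X\subset K$, then points off $K$ lie off $X$, so tangency need only be checked on $X\cap K\subset K$, where $V_H$ has its original, already tangent, values.

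The last stage of the single inductive step in Proposition~\ref{Eqcores.102} combines the vector field built near $\partial H$ with an extension to the rest of a neighborhood of $H$ via a partition of unity. Choose this extension tangent to $X$ near $X\cap H$ by taking $V_H=\partial/\partial\rho_H$ in adapted charts covering $X\cap H$, and any vector field with $V_H\rho_H=1$ outside a neighborhood of $X$; since tangency to $X$ at a point of $X$ is a linear condition on $T_pM$, convex combinations of $X$-tangent vectors remain $X$-tangent, so the glued $V_H$ is tangent to $X$ on its whole domain. The only delicate point is the bookkeeping of the double induction (on $\cB$ and on dimension of boundary faces), but since the tangency condition is pointwise linear, preserved under flow by $X$-tangent vector fields, and preserved under partition-of-unity gluing, it travels through each step without interfering with the defining and commutation conditions of \eqref{Eqcores.103} already verified in Proposition~\ref{Eqcores.102}.
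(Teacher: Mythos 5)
Your proposal is correct and takes essentially the same approach as the paper, whose entire proof is the one-sentence assertion that the tangency condition can be carried along in the inductive construction of Proposition~\ref{Eqcores.102}, starting from the smallest boundary face meeting $X.$ You have simply supplied the details the paper leaves implicit: the local product normal form showing $\partial/\partial\rho_H$ can be taken tangent to $X$ when $X\not\subset H,$ the propagation of tangency under the flows of the already-constructed $V_K,$ and the stability of the (pointwise linear) tangency condition under partition-of-unity gluing.
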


\begin{proof} The condition that the $V_H$ be tangent to $X$ can be carried
  along in the inductive proof in Proposition~\ref{Eqcores.102}, starting
  from the smallest boundary face which meets $X.$
\end{proof}

If $X\subset M$ is a closed p-submanifold then the radial blow-up of $M$
along $X$ is a well-defined manifold with corners $[M;X]$ obtained from $M$
by replacing $X$ by the inward-pointing part of its spherical normal
bundle. It comes equipped with the blow-down map
 \begin{equation}
[M;X] = S^+X \sqcup (M \setminus X),\ \beta :[M;X]\longrightarrow M.
\label{Blow-up}
\end{equation}
The preimage of $X,$ $S^+X,$ is the `front face' of the blow up, denoted
$\ff([M;X]).$ The natural smooth structure on $[M;X],$ with respect to
which $\beta$ is smooth, is characterized by the additional condition that
a radial vector field $R$ for $X,$ as described above, lifts under $\beta$
(i\@.e\@.~is $\beta$-related) to $\rho_{\ff}X_{\ff}$ for a defining
function $\rho _{\ff}$ and normal vector field $X_{\ff}$ for the new
boundary introduced by the blow up.

\begin{figure}[htpb]
\centering
\includegraphics{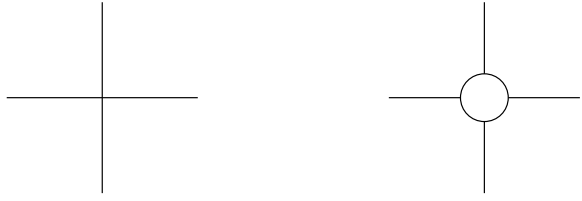}
\caption{Blowing up the origin in $\bbR^2$ results in the manifold with boundary $[\bbR^2; \{0\}] = \bbS^1 \times \bbR^+.$ Polar coordinates around the origin in $\bbR^2$ yield local coordinates near the front face in $[\bbR^2; \{0\}].$ }
\end{figure}

Except in the trivial cases that $X=M$ or $X\in\cM_1(M)$ the front face is
a `new' boundary hypersurface of $[M;X]$ and the preimages of the boundary
hypersurfaces of $M$ are unions of the other boundary hypersurfaces of
$[M;X];$ namely the lift of $H$ is naturally $[H;X\cap H].$ So, in the
non-trivial cases and unless $X$ separates some boundary hypersurface into
two components, there is a natural identification
\begin{equation}
\cM_1([M;X])=\cM_1(M)\sqcup\{\ff([M;X])\}
\label{Eqcores.3}\end{equation}
which corresponds to each boundary hypersurface of $M$ having a unique
`lift' to $[M;X],$ as the boundary hypersurface which is the closure of the
preimage of its complement with respect to $X.$ In local coordinates,
blowing-up $X$ corresponds to introducing polar coordinates around $X$ in
$M.$

\begin{lemma}\label{Eqcores.119} If $X$ is a closed interior
  p-submanifold and $M$ is equipped with a boundary product structure in
  the sense of Proposition~\ref{Eqcores.102} the normal vector fields of
  which are tangent to $X$ then the radial vector field for $X$ induced by
  the exponential map of an 
  associated product-type metric commutes with $V_H$ near any $H\in\cM_1(M)$
  which intersects $X$ and on lifting to $[M;X],$ $R=\rho_{\ff}X_{\ff}$ 
  where $\rho _{\ff}$ and $X _{\ff},$ together with the lifts of the $\rho
  _H$ and $V_H$ give a boundary product structure on $[M;X].$
\end{lemma}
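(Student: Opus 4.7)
The plan is to fix a product-type metric $g$ associated to the given boundary product structure, exploit the fact that each $V_H$ is then Killing in $U_H$ (and tangent to $X$ by hypothesis) to establish $[V_H,R]=0$, and read off the lift of $R$ to $[M;X]$ by passing to polar coordinates around $X$ in adapted local charts.

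First I would fix a product-type metric $g$ of the form \eqref{Eqcores.106} near every corner, compatible with the chosen $\rho_H$ and with $V_H=\partial_{\rho_H}$; such a metric is Killing for each $V_H$ in $U_H$, and $X$ is a closed interior p-submanifold to which all $V_H$ are tangent. Let $r$ denote the function defined near $X$ by pulling back $|v|_g$ under the normal exponential $T:N^+X\supset U'\to U$ of \eqref{Eqcores.117}; this is the distance to $X$ along the normal fibers, and the radial vector field has the form $R=\sum_a z_a\partial_{z_a}$ in normal coordinates $z_a$ on the fibers of $NX$. Since the local flow $\phi_s$ of $V_H$ is an isometry preserving $X$, it intertwines the normal exponential with its own differential, hence preserves both $r$ and $R$. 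Differentiating at $s=0$ yields $V_H r=0$ and $[V_H,R]=0$ in a neighborhood of $X\cap H$.

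For the lift, I would use local coordinates $(\rho_{H_1},\dots,\rho_{H_k},\tilde y,z_1,\dots,z_\ell)$ near a point of $X$, where the $\rho_{H_i}$ are defining functions for the hypersurfaces through the point, $\tilde y$ are tangential coordinates on the relevant boundary face of $X$, and $(z_1,\dots,z_\ell)$ are normal coordinates chosen so that $X=\{z=0\}$. In these coordinates $V_{H_i}=\partial_{\rho_{H_i}}$ and $R=\sum_a z_a\partial_{z_a}$. Introducing polar coordinates $z_a=r\omega_a$ gives a chart on $[M;X]$ near the new boundary face, in which $R=r\partial_r$. Setting $\rho_\ff=r$ and $X_\ff=\partial_r$, the conditions \eqref{Eqcores.103} for the enlarged collection $\{(\rho_H,V_H)\}\cup\{(\rho_\ff,X_\ff)\}$ reduce in this chart to the trivial identities $\partial_r r=1$, $\partial_{\rho_{H_i}}r=0$, $\partial_r\rho_{H_i}=0$ and $[\partial_r,\partial_{\rho_{H_i}}]=0$.

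The main obstacle is showing that these local descriptions patch into a global boundary product structure on $[M;X]$: that $\rho_\ff$ is a globally well-defined boundary defining function for the front face, and that $X_\ff$ commutes with each lifted $V_H$ on a common neighborhood. Both points follow from $V_H r=0$ and $[V_H,R]=0$ established in the first step: the $V_H$-flow preserves $X$ and the normal distance $r$, so lifts to $[M;X]$ as a flow preserving the front face $\{r=0\}$, and in the blown-up coordinates it still equals $\partial_{\rho_H}$ while $X_\ff=\partial_r$ is globally defined, giving $[V_H,X_\ff]=0$. Smoothness of $\rho_\ff$ and $X_\ff$ across the front face is then automatic from the characterization of the blow-up's smooth structure recalled after \eqref{Blow-up}.
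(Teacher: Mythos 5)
Your proof is correct and follows essentially the same route as the paper's: both arguments rest on the commutation $[V_H,R]=0$, which you derive from the Killing property of the $V_H$ for the associated product-type metric together with their tangency to $X$, and both then produce a front-face defining function annihilated by the $V_H$ and satisfying $R\rho_{\ff}=\rho_{\ff}$, so that $X_{\ff}=R/\rho_{\ff}$ completes the product structure. The only real difference is that where the paper constructs such a $\rho_{\ff}$ by ``an inductive argument as above,'' you take it to be the normal distance $r$ to $X$, whose $V_H$-invariance you have already established --- a slightly more direct way to close the argument.
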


\begin{proof} After blow up of $X$ the radial vector field lifts to be of
  the form $a\rho _{\ff}V_{\ff}$ for any normal vector field and defining
  function for the front face, with $a>0.$ The other product data lifts to
product data for all the non-front faces of $[M;X]$ and this lifted data
satisfies $[R,V_H]=0$ near $\ff.$ Thus it is only necessary to show, using
an inductive argument as above, that one can choose $\rho_{\ff}$ to satisfy
$V_H\rho _{\ff}=0$ and $R\rho_{\ff}=\rho_{\ff}$ in appropriate sets to
conclude that $R=\rho _{\ff}V_{\ff}$ as desired.
\end{proof}

%%%%%%%%%%%%%%%%%%%%%%%%%%%%%
\section{Resolution structures}\label{Ifs}
%%%%%%%%%%%%%%%%%%%%%%%%%%%%%

A fibration is a surjective smooth map $\Phi : H\longrightarrow Y$ between
manifolds with the property that for each component of $Y$ there is a
manifold $Z$ such that each point $p$ in that component has a neighborhood
$U$ for which there is a diffeomorphism giving a commutative diagram with
the projection onto $U:$
\begin{equation}
\xymatrix{
\Phi^{-1}(U)\ar[rr]^{F_U}\ar[dr]_{\Phi}&&Z\times U\ar[dl]^{\pi_U}\\
&U.
}
\label{Eqcores.120}\end{equation}
The pair $(U, F_U)$ is a local trivialization of $\Phi.$
Set $\codim(\phi)=\dim Z,$ which will be assumed to be the same for all
components of $Y.$ The image of a boundary face under a fibration must
always be a boundary face (including the possibility of a component of $Y).$ 

\begin{lemma} \label{BlowUpLemmaFib}
Suppose $\Phi: H \longrightarrow Y$ is a fibration with typical fiber $Z.$
\begin{enumerate}
\item[i)] If $S \subseteq H$ is a closed p-submanifold
  transverse to the fibers of $\Phi$, then the composition of $\Phi$ with
  the blow-down map $\beta:[H;S] \longrightarrow H$ is a fibration.
\item[ii)] If  $T \subseteq Y$ is a closed interior
  p-submanifold, then $\Phi$ lifts from $H \setminus \Phi^{-1}(T)$ to a fibration
  $\beta^{\#}\Phi:[H; \Phi^{-1}(T)] \longrightarrow [Y;T].$
\end{enumerate}
\end{lemma}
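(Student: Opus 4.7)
The plan is to treat both parts as local assertions by reducing to a trivial fibration $Z \times U \to U$ and then exploiting the fact that blow-up commutes with a transverse product factor. In each case the main step is to find local trivializations of $\Phi$ adapted to the submanifold being blown up, after which the fibration structure on the blown-up space drops out from standard product identities.

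For part (i), near any $p \in S$ with $u_0 = \Phi(p)$, I would first find a local trivialization $F_U : \Phi^{-1}(U) \to Z \times U$ in which $S$ appears as a true product $S_0 \times U$, where $S_0$ is a closed p-submanifold of $Z$. Existence of such an adapted trivialization is where transversality enters: because $S$ is transverse to the fibers of $\Phi$, the restriction $\Phi|_S$ is a submersion, and the implicit function theorem permits adjusting $F_U$ by a fiber-preserving diffeomorphism so that $S$ becomes $F_U(S \cap \Phi^{-1}(u_0)) \times U$ near $p$. Care is needed so that the straightening respects the corner structure on both factors, but transversality forces any local boundary-defining function of $S$ to involve only fiber directions, which is exactly what is needed to keep the adjustment within product corners. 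Once the adapted form $S = S_0 \times U$ is in place, one applies the product identity $[Z \times U; S_0 \times U] = [Z; S_0] \times U$, immediate from the polar-coordinate description of blow-up, and $\Phi \circ \beta$ becomes the projection $[Z; S_0] \times U \to U$, which is a local trivialization of a fibration with typical fiber $[Z; S_0]$.

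For part (ii) the situation is simpler, since any local trivialization $F_V : \Phi^{-1}(V) \to Z \times V$ is already adapted: the preimage $\Phi^{-1}(T) \cap \Phi^{-1}(V)$ corresponds to $Z \times (T \cap V)$, which is a closed interior p-submanifold of $Z \times V$ whenever $T$ is one in $V$. The corresponding product identity $[Z \times V; Z \times (T \cap V)] = Z \times [V; T \cap V]$ then displays $[H; \Phi^{-1}(T)]$ locally as $Z \times [V; T \cap V]$, with projection to $[V; T \cap V]$ a local trivialization of a fibration of typical fiber $Z$. To see that $\Phi$ actually lifts across the two blow-down maps, one invokes Lemma \ref{Eqcores.119}: a radial vector field for $\Phi^{-1}(T)$ is $\Phi$-related to a radial vector field for $T$, so composing $\Phi$ with the blow-down on the domain factors smoothly through the blow-down on the target, producing $\beta^{\#}\Phi : [H; \Phi^{-1}(T)] \to [Y; T]$ which in the local coordinates above is precisely the projection just described.

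The principal obstacle is the adapted-trivialization step in part (i): arranging that $S$ becomes a genuine product in a local trivialization of $\Phi$ while respecting both corner structures simultaneously. Once this straightening is carried out, the rest of the argument reduces to the standard polar-coordinate computation that blow-up commutes with a transverse product factor, and the fibration statements in both parts are immediate from the local product form.
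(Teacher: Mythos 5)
Your proposal is correct and follows essentially the same route as the paper: localize via a trivialization of $\Phi$ and invoke the product identities $[Z\times U; S_0\times U]=[Z;S_0]\times U$ and $[Z\times U; Z\times T_U]=Z\times[U;T_U]$ to exhibit local trivializations of the blown-up fibrations. The only difference is one of emphasis \mhy\ you spell out the adapted-trivialization step in part (i) that the paper leaves implicit in the assertion that $\Phi\rest{S}$ is a fibration with typical fiber $Z_S$ compatible with $F_U$ \mhy\ which is a reasonable elaboration rather than a different argument.
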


\begin{remark}\label{resolution.1}
In the situation of ii), one may consider instead the pull-back
fibration 
\begin{equation*}
\xymatrix{ \beta_Y^*H \ar[d] \ar[r] & H \ar[d]^{\Phi} \\
[Y;T] \ar[r]^{\beta_Y} & Y }
\end{equation*}
where $\beta_Y^*H=\{(\zeta,\xi)\in H\times[Y;T]:\Phi(\zeta)=\beta_Y(\xi) \}.$
The natural map $[H; \Phi^{-1}(T)] \ni \alpha \mapsto (\beta_H(\alpha),
\wt\Phi(\alpha) ) \in \beta_Y^*H$ is a diffeomorphism, showing that these
fibrations coincide.
\end{remark}

\begin{proof}
i) Transversality ensures that $\Phi(S) = Y$ and so $\Phi\rest{S}$
is itself a fibration, say with typical fiber $Z_S.$ 
If $(U, F_U)$ is a local trivialization of $\Phi$
then since
\begin{equation*}
	[U \times Z; U \times Z_S] =
	U \times [Z; Z_S],
\end{equation*}
the diffeomorphism $F_U$ induces a diagram
\begin{equation*}
	\xymatrix{
	(\beta^*\Phi)^{-1}(U)\ar[rr] \ar[dr]_{\beta^*\Phi} 
	& & U \times [Z; Z_S]  \ar[dl]^{\pi_U}\\
	& U}
\end{equation*}
which shows that $\beta^*\Phi:[H;S] \longrightarrow H \longrightarrow Y$ is
a fibration. 

\noindent ii)
Let $(U,F_U)$ be a local trivialization of $\Phi$ and $T_U = T \cap U.$
The diffeomorphism $F_U$ identifies $\Phi^{-1}(U)$ with $Z \times U$ and $\Phi^{-1}(T_U)$ with $Z \times T_U$
and so lifts to a diffeomorphism $\wt F_U$ of $(\beta^{\#}\Phi)^{-1}([U; T_U])$ with $Z \times [U ; T_U] = [Z\times U; Z\times T_U].$
Thus $([U; T_U], \wt F_U)$ is a local trivialization for $\beta^{\#}\Phi,$
\begin{equation*}
	\xymatrix{
	(\beta^{\#}\Phi)^{-1}([U;T_U])\ar[rr]^-{\wt F_U }\ar[dr]_{\beta^{\#}\Phi} 
	& & Z \times [U; T_U]  \ar[dl]^{\pi_U}\\
	& [U; T_U] }
\end{equation*}
which shows that  $\beta^{\#}\Phi:[H; \Phi^{-1}(T)] \longrightarrow [Y;T]$ is a fibration.
\end{proof}

The restriction of the blow-down map to the boundary hypersurface
introduced by the blow up of a p-submanifold is a fibration, just the
bundle projection for the (inward-pointing part of) the normal sphere
bundle. In general repeated blow up will destroy the fibration property of
this map. However in the resolution of a $G$-action the fibration condition
persists. We put this into a slightly abstract setting as follows.

\begin{definition}\label{Eqcores.121} A resolution structure on a
manifold $M$ is a partition of $\cM_1(M)$ into collective boundary
hypersurfaces, each with a fibration, $\phi_H:H\longrightarrow Y_H$ with 
the consistency properties that if $H_i\in\cM_1(M),$ $i=1,2,$ and 
$H_1\cap H_2\not=\emptyset$ then $\codim(\phi_{H_1})\not=\codim(\phi_{H_2})$ 
and
\begin{equation}
\begin{gathered}
\codim(\phi_{H_1})<\codim(\phi_{H_2})\Longrightarrow\\
\phi_{H_1}(H_1\cap H_2)\in\cM_1(Y_{H_1}),\ \phi_{H_2}(H_1\cap H_2)=Y_{H_2}
\Mand \exists\text{ a fibration}\\
\phi_{H_1H_2}:\phi_{H_1}(H_1\cap H_2)\longrightarrow Y_{H_2}
\text{ giving a commutative diagram:}\\
\xymatrix{
H_1\cap H_2\ar[dr]^{\phi_{H_2}}\ar[rr]^{\phi_{H_1}}&&
\phi_{H_1}(H_1\cap H_2)\ar[dl]_{\phi_{H_1H_2}}\\
&Y_{H_2}.
}
\end{gathered}
\label{Eqcores.122}\end{equation}
\end{definition}

\begin{lemma}\label{Eqcores.301} A resolution structure induces 
resolution structures on each of the manifolds $Y_H.$
\end{lemma}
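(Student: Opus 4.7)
The plan is to declare the induced resolution structure on $Y_H$ to be indexed, collective by collective, by those collective boundary hypersurfaces $H'$ in the given partition of $\cM_1(M)$ (other than the one containing $H$) which meet $H$ and satisfy $\codim(\phi_{H'})>\codim(\phi_H)$. For each such $H'$ the associated collective boundary hypersurface of $Y_H$ will be
\[
\{\phi_H(H\cap \tilde H):\tilde H\in H',\ \tilde H\cap H\neq\emptyset\},
\]
with fibration $\phi_{HH'}\colon \phi_H(H\cap H')\to Y_{H'}$ provided directly by the resolution structure of $M$.

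The key technical ingredient is the preimage identity
\[
\phi_H^{-1}\bigl(\phi_H(H\cap \tilde H)\bigr)=H\cap \tilde H
\]
valid for each individual $\tilde H\in\cM_1(M)$ with $\codim(\phi_{\tilde H})>\codim(\phi_H)$ and $H\cap\tilde H\neq\emptyset$. I would prove it by working in a local trivialization of the fibration $\phi_H\cong(Z\times U\to U)$: the boundary hypersurfaces of $H$ then split into \emph{vertical} ones of the form $(\partial Z)\times U$, which surject onto $U$, and \emph{horizontal} ones of the form $Z\times K$ with $K\in\cM_1(U)$, for which the preimage identity is manifest. The original consistency condition \eqref{Eqcores.122} pins down which boundary intersections $H\cap\tilde H$ are vertical (those with $\codim(\phi_{\tilde H})<\codim(\phi_H)$) and which are horizontal (those with $\codim(\phi_{\tilde H})>\codim(\phi_H)$), so this both exhausts $\cM_1(Y_H)$ as claimed and, because distinct elements $\tilde H_1,\tilde H_2$ of a collective of $M$ are disjoint, gives disjointness within each proposed collective of $Y_H$:
\[
\phi_H(H\cap\tilde H_1)\cap\phi_H(H\cap\tilde H_2)=\phi_H(H\cap\tilde H_1\cap\tilde H_2)=\emptyset.
\]

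With the partition in place, I would verify the consistency condition on $Y_H$. A dimension count gives $\codim(\phi_{HH'})=\codim(\phi_{H'})-\codim(\phi_H)-1$, so the codimension ordering is preserved. If $K_i=\phi_H(H\cap H'_i)$ ($i=1,2$) intersect in $Y_H$, the preimage identity yields $K_1\cap K_2=\phi_H(H\cap H'_1\cap H'_2)$; in particular $H'_1\cap H'_2\neq\emptyset$, so the original consistency condition applies to $H'_1,H'_2$. Assuming $\codim(\phi_{H'_1})<\codim(\phi_{H'_2})$, the identity $\phi_{H'_i}=\phi_{HH'_i}\circ\phi_H$ on $H\cap H'_i$ converts $\phi_{H'_i}(H'_1\cap H'_2)$ into $\phi_{HH'_i}(K_1\cap K_2)$, delivering $\phi_{HH'_1}(K_1\cap K_2)\in\cM_1(Y_{H'_1})$ and $\phi_{HH'_2}(K_1\cap K_2)=Y_{H'_2}$; the fibration $\phi_{H'_1H'_2}$ from the original structure serves as the required induced fibration on $Y_H$, and the new commutative triangle is a pullback of the original.

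The main obstacle is the preimage identity, without which disjointness within a collective of $Y_H$ fails and the proposed structure is ill-defined. It does not follow from the bare fibration axiom but from the compatibility of $\phi_H$ with the boundary structure of $M$ imposed by the original consistency condition; everything else is an essentially formal translation through $\phi_H$.
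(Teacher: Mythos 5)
Your proposal is correct and follows essentially the same route as the paper: identify $\cM_1(Y_H)$ with the images $\phi_H(H\cap K)$ for the $K$ of larger fiber codimension, take the maps $\phi_{HK}$ from \eqref{Eqcores.122} as the induced boundary fibrations, and obtain the compatibility maps from the triple intersections via $\phi_{JK}$. The only difference is that you isolate and prove the preimage identity $\phi_H^{-1}(\phi_H(H\cap\tilde H))=H\cap\tilde H$ (and the resulting disjointness within collectives) explicitly via local trivializations, a point the paper's terser argument leaves implicit.
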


\begin{proof} Each boundary hypersurface $F$ of $Y_H$ is necessarily the
  image under $\phi_H$ of a unique boundary hypersurface of $H,$ therefore
  consisting of a component of some intersection $H\cap K$ for
  $K\in\cM_1(M).$ The condition \eqref{Eqcores.122} ensures that
  $\codim(\phi_H)<\codim(\phi_K)$ and gives the fibration
  $\phi_{HK}:F\longrightarrow Y_K.$ Thus for $Y_H$ the bases of the
  fibrations of its boundary hypersurfaces are all the $Y_K$'s with the
  property that $H\cap K\not=\emptyset$ and $\codim(\phi_H)<\codim(\phi_K)$
  with the fibrations being the appropriate maps $\phi_*$ from
  \eqref{Eqcores.122}.

Similarly the compatibility maps for the boundary fibration of $Y_H$ follow
by the analysis of the intersection of three boundary hypersurfaces $H,$
$K$ and $J$ where $\codim(\phi_H)<\codim(\phi_K)<\codim(\phi_J).$ Any two
intersecting boundary hypersurfaces of $Y_H$ must arise in this way, as
$\phi_H(H\cap K)$ and $\phi_H(H\cap J)$ and the compatibility map for them
is $\phi_{JK}.$
\end{proof}

If $M$ carries a resolution structure then Lemma \ref{BlowUpLemmaFib}
shows that appropriately placed submanifolds can be blown up and the
resolution structure can be lifted. Specifically we say that a manifold 
$T$ is {\em transverse to the resolution structure} if either:

\begin{enumerate}
\item[i)] $T$ is an interior p-submanifold of $M,$ with $\dim T < \dim M,$ that 
is transverse to the fibers of $\phi_H$ for all $H \in \cM_1(M),$ or
\item[ii)] $T$ is an interior p-submanifold of $Y_L,$ for some $L \in \cM_1(M),$
with $\dim T < \dim Y_L,$ that is transverse to the fibers of $\phi_N$ for
all $N \in \cM_1(Y_L).$
\end{enumerate}
Let $\wt T \subseteq M$ be equal to $T$ in the first case and
$\phi_L^{-1}(T)$ in the second, then we have the following result.

\begin{proposition}\label{Eqcores.126} 
If $M$ carries a resolution structure and $T$ is a manifold transverse to 
it, then $[M; \wt T]$ carries a resolution structure.
In case ii) above, where $T \subseteq Y_L,$ the resolution structure on 
$[M; \phi_L^{-1}(T)]$ is obtained by blowing-up the lift of $T$ to every 
$Y_K$ that fibers over $Y_L.$
In both cases, at each boundary face of the new resolution structure
the boundary fibration is either the pull-back of the previous one 
along the blow-down map or the blow-down map itself.
\end{proposition}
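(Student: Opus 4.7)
The plan is to handle cases (i) and (ii) separately, using Lemma~\ref{BlowUpLemmaFib} to produce the lifted fibrations on each boundary hypersurface of the blown-up manifold, and then verifying the consistency condition \eqref{Eqcores.122} at every new corner.

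In case (i), for each $H \in \cM_1(M)$ the submanifold $T \cap H$ is a p-submanifold of $H$ transverse to the fibers of $\phi_H$, so Lemma~\ref{BlowUpLemmaFib}(i) produces the lifted fibration $[H; T \cap H] \to Y_H$ as the pull-back of $\phi_H$ along the blow-down map. The new front face $\ff = S^+T$ carries the sphere bundle projection $\ff \to T$, which is the blow-down map restricted to $\ff$. Consistency between two lifted hypersurfaces is inherited from the original structure; at a mixed corner $\ff \cap [H; T \cap H] = S^+(T \cap H)$ one has $\codim(\phi_\ff) = \codim_M(T) - 1 < \codim(\phi_H)$ by transversality, the front-face fibration sends the corner to $T \cap H \in \cM_1(T)$, the lifted $\phi_H$ sends it onto $Y_H$, and the compatibility fibration is $\phi_H|_{T \cap H}$.

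Case (ii) follows the same outline but requires more care because $\wt T = \phi_L^{-1}(T)$ lies in the boundary hypersurface $L$. One first checks that $\wt T$ is a closed p-submanifold of $M$ by trivializing $\phi_L$. The lift of $L$ is $[L; \wt T]$ and Lemma~\ref{BlowUpLemmaFib}(ii) lifts $\phi_L$ to $[L; \wt T] \to [Y_L; T]$. For $H \neq L$ with $H \cap L \neq \emptyset$, the original consistency forces one of two cases. If $\codim(\phi_L) < \codim(\phi_H)$, the factorization $\phi_H|_{H \cap L} = \phi_{LH} \circ \phi_L|_{H \cap L}$ combined with the transversality of $T$ to the fibers of $\phi_{LH}$ shows that $H \cap \wt T$ is transverse to the fibers of $\phi_H$, so Lemma~\ref{BlowUpLemmaFib}(i) produces $[H; H \cap \wt T] \to Y_H$. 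If $\codim(\phi_H) < \codim(\phi_L)$, the identity $\phi_H^{-1}(\phi_H(H \cap L)) = H \cap L$ yields $H \cap \wt T = \phi_H^{-1}(\phi_{HL}^{-1}(T))$, and a boundary-submanifold variant of Lemma~\ref{BlowUpLemmaFib}(ii) (with essentially the same proof) gives $[H; H \cap \wt T] \to [Y_H; \phi_{HL}^{-1}(T)]$, matching the prescription of blowing up the lift of $T$ in every $Y_K$ that fibers over $Y_L$. The new front face $\ff = S^+\wt T$ fibers over $T$ via the composition $\ff \to \wt T \to T$ of the sphere bundle projection with $\phi_L|_{\wt T}$.

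The main bookkeeping burden, and the main obstacle, is verifying the consistency condition at every mixed corner involving $\ff$. At $\ff \cap [L; \wt T]$ the codimension $\codim(\phi_\ff) = \codim(\phi_L) + \codim_{Y_L}(T)$ exceeds $\codim(\phi_L)$, and the corner maps onto $T$ under $\phi_\ff$ and onto $S^+T \in \cM_1([Y_L; T])$ under the lifted $\phi_L$, with compatibility map the sphere bundle projection $S^+T \to T$. At $\ff \cap [H; H \cap \wt T]$ the codimension comparison between $\phi_\ff$ and $\phi_H$ is the step that genuinely uses the transversality hypothesis: in the case $\codim(\phi_H) < \codim(\phi_L)$ the inequality $\dim \phi_H(H \cap L) \ge \dim Y_L$ forces $\codim(\phi_\ff) > \codim(\phi_H)$ and the lifted $\phi_H$ sends the corner onto the front face $S^+(\phi_{HL}^{-1}(T))$ of the blown-up base, while in the opposite case transversality translates into a surjection $T \cap \phi_L(H \cap L) \twoheadrightarrow Y_H$ and yields the reverse inequality. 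Once the correct codimension order is established, the required compatibility fibration is produced by the lemma, and corners between two lifted hypersurfaces follow directly from the original resolution structure.
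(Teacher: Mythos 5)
Your proposal is correct and follows essentially the same route as the paper's proof: both treat the two cases of transversality separately, invoke Lemma~\ref{BlowUpLemmaFib} to lift each boundary fibration (pull-back on lifted hypersurfaces, blow-down composed with $\phi_L$ on the front face, and a blown-up base $[Y_K;\phi_{KL}^{-1}(T)]$ when $\codim(\phi_K)<\codim(\phi_L)$), and then verify \eqref{Eqcores.122} at each mixed corner by the same codimension comparisons. If anything you are slightly more explicit than the paper in flagging that a boundary-p-submanifold variant of Lemma~\ref{BlowUpLemmaFib}(ii) is needed and in spelling out the codimension counts for $\phi_{\ff}$.
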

\noindent Recall that submanifolds which do not intersect are included in
the notion of transversal intersection.

\begin{proof} Consider the two cases in the definition of transverse
  submanifold separately. (For clarity, we assume throughout the proof
  that the collective boundary hypersurfaces in Definition 
  \ref{Eqcores.121} consist of a single boundary hypersurface.)

\noindent
Case i). Let $\beta_T:[M;T] \longrightarrow M$ be the blow-down map. 
A boundary face of $[M;T]$ is either the lift of a boundary face 
$H \in \cM_1(M),$ in which case $\beta_T^*\phi_H$ is a fibration by Lemma 
\ref{BlowUpLemmaFib} i), or it is the front face of the blow-up, in 
which case it carries the fibration $\beta_T\big|_{\ff}.$ Thus we only 
need to check the compatibility conditions.

The compatibility maps for the fibrations of the hypersurfaces of $M$
clearly lift to give compatibility maps for the lifts. Thus it is only
necessary to check compatibility between the fibrations on these lifted 
boundary hypersurfaces of $[M;T]$ and that of the front face. So, let $H$ 
be a hypersurface of $M$ that intersects $T.$ In terms of the notation
above, the codimension of $\beta_T^*\phi_H$ is the equal to $\dim Z_H$ 
while the codimension of $\phi_{\ff}$ is equal to $\dim Z_H - 
\dim Z_{H \cap T}.$ The diagram \eqref{Eqcores.122} in this case is
\begin{equation*}
\xymatrix{
\ff \cap [H;H\cap T] \ar[dr]^{\wt \phi_{H}}\ar[rr]^{\beta_T}&&
H \cap T \ar[dl]_{\phi_H}\\
&\phi_H(H\cap T) = Y_H.
}
\end{equation*}
and so the requirements of Definition \ref{Eqcores.121} are met.

\noindent Case ii).
First note that the inverse image of a p-submanifold under a fibration is
again a p-submanifold since this is a local property and locally a fibration 
is a projection. We denote by $\beta_T:[M; \phi_L^{-1}(T)] \longrightarrow
M$ the blow-down map and make use of the notation in \eqref{Eqcores.122}.

From the front face the map
\begin{equation*}
	\ff([M; \phi_L^{-1}(T)]) \xrightarrow{\text{ }\beta_T\text{ }} 
	\phi_L^{-1}(T) \xrightarrow{\text{ }\phi_L\text{ }} T
\end{equation*}
is the composition of fibrations and so is itself a fibration.

Consider the lift of a boundary face $H \in \cM_1(M)$ to a boundary face 
of $[M; \wt T].$ If $H \cap \phi_L^{-1}(T)$ is empty then 
$\beta_T^*\phi_H$ fibers over $Y_H$ and the compatibility conditions are 
immediate. If $H \cap \phi_L^{-1}(T)$ is not empty and $\codim(\phi_L) <
\codim (\phi_H)$ then, by Lemma \ref{BlowUpLemmaFib}, $\beta_T^*\phi_H$
fibers over $Y_H$ and the arrows in the commutative diagrams 
\begin{equation*}
	\xymatrix{ [H \cap L; H \cap \phi_L^{-1}(T)] 
	\ar[dr]^{\beta_T^*\phi_H} \ar[rr]^{\beta_T^{\#}\phi_L} 
	&& [\phi_L(H \cap L); \phi_L(H \cap L) \cap T] 
	\ar[dl]_{\beta^*\phi_{LH}} \\
	& Y_H & }
\end{equation*}
and
\begin{equation*}
	\xymatrix{ \ff([H; H \cap \phi_L^{-1}(T)]) 
	\ar[dr]^{\beta_T^*\phi_H} \ar[rr]^{\beta_T^{*}\phi_L} 
	&& \phi_L(H \cap L) \cap T \ar[dl]_{\phi_{LH}} \\
	& Y_H & }
\end{equation*}
are all fibrations. Here, surjectivity of $\phi_{LH}\rest{\phi_L(H \cap L)
  \cap T}$ follows from the transversality of $T$ to the fibers of
$\phi_{LH}.$ Since the  lift of $H$ meets the lift of $L$ in $[H \cap L; H
  \cap \phi_L^{-1}(T)]$ and meets the front face of $[M; \phi_L^{-1}(T)]$
in $\ff([H; \phi_L^{-1}(T) \cap H]),$ these diagrams also establish the
compatibility conditions for the lift of $H.$

Next if $H \cap \phi_L^{-1}(T)$ is not empty and $\codim(\phi_L) > \codim
(\phi_H),$  then Lemma \ref{BlowUpLemmaFib} guarantees that the map
$\beta_T^{\#}\phi_H$ is a fibration from the lift of $H$ to $[Y_H;
  \phi_{HL}^{-1}(T)]$ and that the arrows in the commutative diagrams
\begin{equation*}
	\xymatrix{ [H \cap L; H \cap \phi_L^{-1}(T)] 
	\ar[dr]^{\beta_T^{\#}\phi_H} \ar[rr]^{\beta_T^{\#}\phi_H} 
	&& [\phi_L(H \cap L); \phi_L(H \cap L) \cap T] 
	\ar[dl]_{\beta^{\#}\phi_{HL}} \\
	& [Y_L; T] & }
\end{equation*}
and
\begin{equation*}
	\xymatrix{ \ff([H; H \cap \phi_L^{-1}(T)]) 
	\ar[dr]^{\beta_T^*\phi_H} \ar[rr]^{\beta_T^{\#}\phi_L} 
	&& \ff([\phi_H(H \cap L); \phi_{HL}^{-1}(T)]) 
	\ar[dl]_{\beta^*\phi_{HL}} \\
	& T.}
\end{equation*}
are all fibrations.

Finally consider the lift of $L.$ The map $\beta^{\#}\phi_L:[L; \phi_L^{-1}(T)] 
\longrightarrow [Y_L; T]$ is a fibration by Lemma \ref{BlowUpLemmaFib} and
the discussion above shows that it is compatible with the lift of $H$ for
any $H \in \cM_1(M).$ The final compatibility between the lift
of $L$ and the front face is established by the commutative diagram
\begin{equation*}
\xymatrix{
\ff( [L; \phi_L^{-1}(T)] ) \ar[dr]^{\beta_T^* \phi_{L}}\ar[rr]^{\phi_{L}}&&
\ff( [Y_L;T] )  \ar[dl]_{\beta}\\
&T.
}
\end{equation*}

\end{proof}

\begin{definition}\label{Eqcores.123} If $M$ carries a resolution
  structure as in Definition~\ref{Eqcores.121} then a boundary product
  structure is said to be \emph{compatible} with the resolution structure if
  for each pair of intersecting boundary faces $H_1$ and $H_2$ with 
  $\codim(\phi_{H_1})<\codim(\phi_{H_2})$  
\begin{gather}
\rho_{H_2}\big|_{H_1}\in\phi_{H_1}^*\CI(Y_{H_1})\Mnear H_2,
\label{Eqcores.124}\\
V_{H_2}\big|_{H_1}\text{ is }\phi_{H_1}\text{-related to a vector field on }
Y_{H_1}\Mnear H_2
\Mand\\
V_{H_1}\big|_{H_2}\text{ is tangent to the fibers of }\phi_{H_2}.
\label{Eqcores.125}\end{gather}
\end{definition}

\begin{proposition}\label{PreserveIFS} For any resolution structure
on a compact manifold, $M,$ there is a compatible boundary product
structure.
\end{proposition}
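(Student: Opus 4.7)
The plan is to adapt the inductive scheme of Proposition \ref{Eqcores.102} so that, at each stage, the extra compatibility conditions of Definition \ref{Eqcores.123} are enforced alongside those of Definition \ref{Eqcores.123A}. Specifically, I would order the collective boundary hypersurfaces of $M$ by nondecreasing $\codim(\phi_H)$ and process them in that order, using the commutative diagrams \eqref{Eqcores.122} of the resolution structure to reconcile prescriptions coming from different intersecting hypersurfaces.

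When a new hypersurface $H$ is treated, I would first build the defining function $\rho_H.$ For each previously handled intersecting $H_1$ (necessarily with $\codim(\phi_{H_1}) < \codim(\phi_H),$ since equal-codimension hypersurfaces lie in the same disjoint collective and therefore do not meet), Definition \ref{Eqcores.121} gives that $\phi_{H_1}(H \cap H_1) \in \cM_1(Y_{H_1}).$ I would choose a defining function $r_{H_1}$ for this face in $Y_{H_1}$ and prescribe $\rho_H|_{H_1} = \phi_{H_1}^{*} r_{H_1}$ near $H \cap H_1;$ a partition-of-unity patching then extends these prescriptions to a global $\rho_H$ satisfying \eqref{Eqcores.124}. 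Next I would construct $V_H$ by the inner induction of Proposition \ref{Eqcores.102}, with two refinements. Near a smaller-codimension intersecting $H_1,$ in a local trivialization $\phi_{H_1}^{-1}(V) \simeq Z \times V,$ take $V_H|_{H_1}$ to be the horizontal lift of a vector field $W$ on $V \subseteq Y_{H_1}$ normalized by $W r_{H_1} = 1;$ the identity $V_H \rho_H = 1$ on $H_1$ is then automatic from the form of $\rho_H.$ Near a larger-codimension intersecting $H_2,$ trivialize $\phi_{H_2}$ and force $V_H|_{H_2}$ to point along the fiber factor. Commutation with previously constructed vector fields and extension off the boundary then follow from the Lie-bracket argument of Proposition \ref{Eqcores.102}.

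The hard part will be verifying the mutual consistency of the various prescriptions on higher-order overlaps, most delicately on a triple intersection $H \cap H_1 \cap H_2$ where conditions \eqref{Eqcores.124} and \eqref{Eqcores.125} constrain $V_H$ simultaneously. This is precisely where the compatibility fibration $\phi_{H_1 H_2}$ of \eqref{Eqcores.122} is required: it ensures that the horizontal lift from $Y_{H_1}$ projects further under $\phi_{H_1 H_2}$ to be tangent to the fibers of $\phi_{H_2},$ so the two prescriptions agree on the overlap. Iterating this observation to all intersection depths, together with the induction on $\codim(\phi_H),$ yields a boundary product structure compatible with the resolution.
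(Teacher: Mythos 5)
Your proposal follows essentially the same route as the paper: both adapt the induction of Proposition~\ref{Eqcores.102}, choosing $\rho_H$ as a pull-back of defining functions from the bases, taking $V_H$ to be a connection lift of a normal vector field on $Y_{H_1}$ at smaller-codimension neighbours and tangent to the fibers at larger-codimension ones, and using the compatibility fibrations $\phi_{H_1H_2}$ of \eqref{Eqcores.122} to reconcile these prescriptions (the paper merely runs the vector-field induction in decreasing rather than increasing order of $\codim(\phi_H)$). The one step you should make explicit is the induction on dimension hiding in your triple-intersection argument: for the lift of $W$ from $Y_{H_1}$ to be tangent to the fibers of $\phi_{H_1H_2}$, and for the prescriptions coming from different bases to agree, the local data $(r_{H_1},W)$ must be assembled into a boundary product structure on $Y_{H_1}$ compatible with its induced resolution structure (Lemma~\ref{Eqcores.301}) --- that is, the proposition must first be invoked for the lower-dimensional manifolds $Y_{H_1}$, which is exactly how the paper closes this step.
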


\begin{proof} We follow the proof on Proposition \ref{Eqcores.102}. In
  particular, we will use the notion of consistent boundary data on a
  collection of boundary hypersurfaces.

First, choose boundary defining functions satisfying \eqref{Eqcores.124}.
Let $H \in \cM_1(M)$ and define $\cH\subset\cM_1(M)$ to consist of those
boundary hypersurfaces $K\in\cM_1(M)$ which intersect $H$ and satisfy
$\codim(\phi_K) < \codim (\phi_H).$ If $L \in \cH,$ we may assume
inductively that we have chosen $\rho_H\rest K$ for all boundary
hypersurfaces $K \in \cH$ with $\codim(\phi_K) < \codim (\phi_L),$ and then
choose an extension to $H \cap L$ as a lift of a boundary defining function
for the boundary face $\phi_{L}(H \cap L).$ This allows $\rho_H$ to be
defined on a neighborhood of $H \cap K$ in $K$ for all $K \in \cH;$
extending it to a boundary defining function of $H$ in $M$ fulfills the
requirements.

Next, suppose normal vector fields consistent with the resolution
structure and associated collar neighborhoods have been found for some
subset $\cB \subseteq \cM_1(M)$ with the property that $H \in
\cM_1(M)\setminus \cB$ and $K \in \cB$ implies that $H\cap K=\emptyset$ or
$\codim(\phi_H) < \codim(\phi_K).$ Let $H \in \cM_1(M)\setminus \cB$ be
such that $\phi_H$ has maximal codimension among the boundary hypersurfaces
of $M$ that are not in $\cB.$ We show that there is a choice of $V_H$ and
$U_H$ such that \eqref{Eqcores.103} and the conditions of Definition
\ref{Eqcores.123} hold for all boundary hypersurfaces in $\cB \cup \{H\}.$

As before an inductive argument allows us to find $V_H$ in a neighborhood
of all intersections $H \cap K$ with $K \in \cB$ with the property that
$V_H\rest{K}$ is tangent to the fibers of $\phi_K.$ Then $V_H$ can be
extended into $U_K$ using the vector fields $V_K$ by demanding that
\begin{equation*}
	\cL_{V_K}V_H = [V_K, V_H] = 0
\end{equation*}
thus determining $V_H$ locally uniquely in a neighborhood of $H \cap K$ in
$M$ for all $K \in \cB.$

If $K \in \cM_1(M)\setminus \cB$ intersects $H,$ then $Y_K$ is itself a
manifold with a resolution structure and $\phi_H(H \cap K)$ is one
of its boundary hypersurfaces. We can choose boundary product data on $Y_K$
-- since it has smaller dimension than $M$ we may assume that the
proposition has been proven for it. Under a fibration there is always a
smooth lift of vector fields, a connection, so $V_H$ on $\phi_H(H \cap K)$
may be lifted to a vector field $V_H$ on $H \cap K.$ In this way $V_H$ may
be chosen on the intersection of $H$ with any of its boundary faces. Then
$V_H$ may be extended into a neighborhood $U_H$ of $H$ in $M$ in such a way
that $V_H\rho_H =1.$  By construction the commutation
relations with all the previously constructed vector fields are satisfied
and $V_H$ is compatible with the resolution structure at all
boundary hypersurfaces in $\cB \cup \{ H \}.$ Thus the inductive step is
justified.
\end{proof}

Using Proposition~\ref{Eqcores.118} and Lemma~\ref{Eqcores.119} we see that
resolution structures and boundary product structures are preserved
when blowing up appropriately placed p-submanifolds.

\begin{proposition}\label{Eqcores.124A} 
If $M$ is a manifold with a resolution structure and 
% $X\subset M$ is a closed interior p-submanifold
%   which is transversal to the fibers of the fibration of each boundary
%   hypersurface then $[M;X]$
$X$ is a manifold transverse to the resolution structure then $[M;\wt X]$ 
has a boundary product structure which is
  compatible with the resolution structure given by
  Proposition~\ref{Eqcores.126}, is such that the normal vector fields to
  boundary hypersurfaces other than the front face are $\beta$-related to a
  boundary product structure on $M$ and is such that $\rho _{\ff}V_{\ff}$
  is $\beta$-related to a radial vector field for $\wt X.$
\end{proposition}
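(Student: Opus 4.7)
The plan is to first construct on $M$ itself a boundary product structure that simultaneously is compatible with the given resolution structure in the sense of Definition \ref{Eqcores.123} and has each normal vector field $V_H$ tangent to $\wt X$ (unless $\wt X \subset H$), and then to transfer this to $[M;\wt X]$ via Lemma \ref{Eqcores.119}. Once the structure on $M$ is in place, the $\beta$-relatedness of the non-front-face data and the identity $\rho_{\ff}V_{\ff} = R$ will be automatic from that lemma, and the remaining work will be to check that the lifted structure is compatible with the resolution structure supplied by Proposition \ref{Eqcores.126}.

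To build the structure on $M$, I would combine the inductive arguments of Propositions \ref{Eqcores.118} and \ref{PreserveIFS} into a single induction ordered by increasing $\codim(\phi_H)$. At each stage one adds a hypersurface $H$ to the already-treated subset $\cB$ and must produce $(\rho_H, V_H)$ compatible with the resolution structure and with $V_H$ tangent to $\wt X$ away from hypersurfaces containing $\wt X$. Extending $V_H$ across the collar neighborhoods $U_K$ for $K \in \cB$ by the commutation condition $[V_K, V_H] = 0$ automatically preserves tangency to $\wt X$, since each $V_K$ is itself tangent to $\wt X$ by the inductive hypothesis. The construction of $V_H$ on the boundary of $H$ is reduced, via compatibility with the fibration $\phi_H$, to the analogous construction on $Y_H$, where by induction on dimension the proposition may be assumed; the transversality of $\wt X$ to the fibers of $\phi_H$ guarantees that $\phi_H(H \cap \wt X)$ is a submanifold of $Y_H$ transverse to its inherited resolution structure. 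The final extension into a neighborhood of $H$ with $V_H \rho_H = 1$ is carried out by a partition-of-unity patch, exactly as in Propositions \ref{Eqcores.102} and \ref{Eqcores.118}.

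With this boundary product structure on $M$ in hand, Lemma \ref{Eqcores.119} immediately yields a boundary product structure on $[M;\wt X]$ with the required $\beta$-related lifts for the non-front faces and with $\rho_{\ff}V_{\ff}$ equal to the lift of a radial vector field for $\wt X$. Compatibility with the lifted resolution structure must then be verified face by face: for lifts of $H \in \cM_1(M)$ the conditions of Definition \ref{Eqcores.123} descend by $\beta$-relatedness from those already arranged on $M$, while for the front face the fibration is either $\beta_T$ or $\phi_L \circ \beta_T$, and the two conditions that $V_H|_{\ff}$ be tangent to the fibers and $V_{\ff}|_H$ be $\phi_H$-related to a vector field on $Y_H$ are precisely the tangency of $V_H$ to $\wt X$ on $M$ and the commutation $[R, V_H] = 0$ near $\wt X$, both of which are built into the construction.

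The main obstacle is case ii), where $\wt X = \phi_L^{-1}(T)$ lies inside the boundary hypersurface $L$ rather than being interior, so Lemma \ref{Eqcores.119} applies only after a mild extension using the exponential map along the fibers of the normal sphere bundle of $\wt X$ in $L$. The inductive construction must also respect the fact that $\wt X$ is saturated by fibers of $\phi_L$: the required tangency of $V_H$ to $\wt X$ on common boundary faces with $L$ reduces, via $\phi_L$, to a tangency condition on $Y_L$ relative to $T$, and its consistency with the inductive construction on $Y_L$ rests on the hypothesis that $T$ is transverse to the fibers of the boundary fibrations of $Y_L$.
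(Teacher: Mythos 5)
Your proposal is correct and matches the paper's intent exactly: the paper offers no written proof of Proposition~\ref{Eqcores.124A} beyond the preceding remark that it follows from Proposition~\ref{Eqcores.118} and Lemma~\ref{Eqcores.119}, and your argument---merging the inductions of Propositions~\ref{Eqcores.118} and \ref{PreserveIFS} to get a compatible, $\wt X$-tangent boundary product structure on $M$, then lifting via Lemma~\ref{Eqcores.119} and checking Definition~\ref{Eqcores.123} on the lifted faces---is precisely the argument being left to the reader. Your explicit attention to case ii), where $\wt X=\phi_L^{-1}(T)$ is a boundary rather than interior p-submanifold and Lemma~\ref{Eqcores.119} needs adaptation, is a point the paper silently elides.
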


%%%%%%%%%%%%%%%%%%%%%%%%%%%%%%%%%%%%%%%%%%%
\section{Group actions}\label{Gacts}
%%%%%%%%%%%%%%%%%%%%%%%%%%%%%%%%%%%%%%%%%%%

Let $G$ be a compact Lie group and $M$ a compact manifold (with
corners). An action of $G$ on $M$ is a smooth map $A:G\times
M\longrightarrow M$ such that $A(\Id,\zeta)=\zeta$ for all $\zeta\in M$ and
\begin{equation}
\xymatrix{
&G\times M\ar[dr]^{A}\\
G\times G\times M\ar[ur]^{\cdot\times\Id}\ar[dr]_{\Id\times A}&&M\\
&G\times M\ar[ur]_A}
\label{Eqcores.101}\end{equation}
commutes; here $\cdot$ denotes the product in the group. Equivalently this
is just the requirement that $A$ induces a group homomorphism from $G$ to
the diffeomorphism group of $M.$

We will usually denote $A(g,\zeta)$ as $g\cdot \zeta.$ Since each element
$g\in G$ acts as a diffeomorphism on $M,$ it induces a permutation of the
boundary hypersurfaces of $M.$  If $g$ is in the connected component of the
identity of $G,$ this is the trivial permutation.

Our convention is to assume, as part of the definition, that the action of
$G$ is boundary intersection free in the sense of
Definition~\ref{Eqcores.107}. That is, the set $\cM_1(M)$ of boundary
hypersurfaces can be partitioned into disjoint sets
\begin{equation}
\begin{gathered}
\cM_1(M)=B_1\sqcup B_2\sqcup\dots\sqcup B_l,\ H,H'\in B_i\Longrightarrow
H\cap H'=\emptyset, \\ \text{ and s\@.t\@. } g\cdot H\in B_i\Mif H\in B_i. 
\end{gathered}
\label{Eqcores.108}\end{equation}
The contrary case will be referred to as a $G$-action \emph{with boundary
  intersection} -- it is shown below in Proposition~\ref{BounRes} that by
resolution the boundary intersection can be removed. As justification for
our convention, note that the $G$-actions which arise from the resolution of
a $G$-action on a manifold without boundary are always boundary
intersection free.

\begin{figure}[htpb]
\label{fig:BdyFree}
\centering
\includegraphics{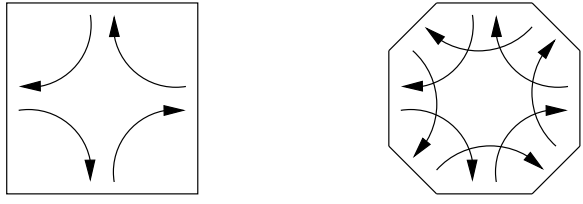}
\caption{The action on the square has boundary intersection, that on the octagon is boundary intersection free.}
\end{figure}

For a given $G$-action, the \emph{isotropy} (or \emph{stabilizer}) subgroup
of $G$ at $\zeta \in M$ is
\begin{equation}
G_{\zeta} = \{ g \in G; g\cdot \zeta = \zeta \}.  
\label{Eqcores.111}\end{equation}
It is a closed, and hence Lie, subgroup of $G.$

The action of $G$ on $M$ induces a pull-back action on $\CI(M).$ The
differential of this action at $\Id\in G$ induces the action of the Lie
algebra $\df g$ on $\CI(M)$ where $V \in\df g$ is represented by a vector
field $\alpha(V) \in \bV(M),$ the Lie algebra of smooth vector fields on $M$
tangent to all boundary faces, given by 
\begin{equation}\label{DefLX}
\alpha(V)f(\zeta) = \frac{d}{d t} f \lrpar{ e^{-tV} \zeta }\rest{t =0},
\Mforall f \in \CI(M).
\end{equation}
Since $[\alpha(V), \alpha(W)] = \alpha([V,W]),$ this is a map of Lie
algebras, $\alpha :\df g \longrightarrow \bV(M).$ The differential at
$\zeta\in M$ will be denoted
\begin{equation}
\alpha _\zeta :\df g\longrightarrow T_\zeta M.
\label{Eqcores.112}\end{equation}
The image always lies in $T_\zeta F$ where $F\in\cM_k(M)$ is the
smallest boundary face containing $\zeta.$
 
\begin{proposition}\label{Eqcores.109} For any compact group action on a compact
  manifold, satisfying \eqref{Eqcores.108}, the collective boundary
  hypersurfaces $B_i$ each have a collective defining function
  $\rho_i\in\CI(M)$ which is $G$-invariant and there is a corresponding
  $G$-invariant product structure near the boundary consisting of smooth
  $G$-invariant vector fields $V_i$ and neighborhoods $U_i$ of
  $\supp(B_i)=\cup\{H\in B_i\}$ for each $i$ such that  
\begin{equation}
V_i\rho_j=\begin{cases}1&\Min U_i\Mif i=j\\
0&\Min U_i\cap U_j\Mif i\not=j.
\end{cases}
\label{Eqcores.110}\end{equation}
Furthermore there is a $G$-invariant product-type metric on $M.$
\end{proposition}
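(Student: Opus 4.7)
The strategy is to combine the inductive construction of Proposition~\ref{Eqcores.102} with averaging over $G$ against its normalized Haar measure. The boundary intersection free hypothesis \eqref{Eqcores.108} tells us that $G$ permutes the hypersurfaces inside each collection $B_i,$ and in particular preserves the closed set $\supp(B_i).$

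First, I would produce the $G$-invariant collective defining functions. Starting from any collective defining function $\rho_i^0$ for $B_i$ (which exists since the hypersurfaces in $B_i$ are embedded and pairwise disjoint), set $\rho_i = \int_G g^*\rho_i^0\,dg.$ As an average of non-negative smooth functions vanishing exactly on $\supp(B_i),$ the function $\rho_i$ is smooth, non-negative and vanishes precisely on $\supp(B_i).$ For $\zeta\in H\in B_i$ and $X\in T_\zeta M$ an inward-pointing normal to $H,$ the image $dg|_\zeta(X)$ is an inward-pointing normal to $g\cdot H\in B_i,$ so $d(g^*\rho_i^0)|_\zeta(X)>0$ for every $g;$ averaging gives $d\rho_i|_\zeta(X)>0,$ so $\rho_i$ is a $G$-invariant collective defining function for $B_i.$

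Next I would inductively produce the vector fields $V_1,\dots,V_\ell,$ following the proof of Proposition~\ref{Eqcores.102} but averaging at each stage. Suppose $G$-invariant, pairwise commuting $V_1,\dots,V_{k-1}$ have been constructed on $G$-invariant open neighborhoods $U_1,\dots,U_{k-1}$ of $\supp(B_1),\dots,\supp(B_{k-1})$ satisfying \eqref{Eqcores.110}. Apply Proposition~\ref{Eqcores.102}, now with the $G$-invariant $\rho_i$ and the already constructed $G$-invariant $V_j,$ $j<k,$ to obtain a vector field $V_k$ on some neighborhood $W$ of $\supp(B_k)$ with $V_k\rho_k=1,$ $V_k\rho_j=0$ on $W\cap U_j,$ and $[V_k,V_j]=0$ on $W\cap U_j$ for $j<k.$ Its $G$-average $\bar V_k=\int_G g_*V_k\,dg$ is defined on the $G$-invariant open neighborhood $U_k=\bigcap_{g\in G}g\cdot W$ of $\supp(B_k)$ (open because $G\cdot W^c$ is the continuous image of the compact set $G\times W^c,$ hence closed), is itself $G$-invariant, and the $G$-invariance of $\rho_j$ preserves the normalization $\bar V_k\rho_j=\delta_{jk}.$ The crucial commutation is also preserved: since each $V_j$ with $j<k$ is already $G$-invariant, $g_*V_j=V_j,$ whence
\begin{equation*}
[g_*V_k,V_j]=[g_*V_k,g_*V_j]=g_*[V_k,V_j]=0 \text{ for every } g\in G,
\end{equation*}
and averaging in $g$ gives $[\bar V_k,V_j]=0$ on $U_k\cap U_j.$ This closes the induction. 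The main (if minor) obstacle, preservation of the commutation relations under averaging, is thus handled by the prior $G$-invariance of the lower-index vector fields.

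For the $G$-invariant product-type metric, build any product-type metric $h$ on $M$ from the $G$-invariant data $(\rho_i,\bar V_i)$ just produced, as in Corollary~\ref{Eqcores.105}. Near a boundary face $F\in\cM_k(M),$ the metric has the form \eqref{Eqcores.106}, and because the projection $\phi_F$ along the integral surfaces of the commuting $G$-invariant vector fields $\bar V_{H_i}$ is $G$-equivariant and the $\rho_{H_i}$ are $G$-invariant, averaging $h$ over $G$ preserves this form, reducing to averaging the tangential metric $h_F$ on $F.$ Proceeding inductively on boundary codimension one obtains a $G$-invariant product-type metric on $M.$
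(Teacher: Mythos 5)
Your proposal is correct and follows essentially the same route as the paper: average a collective defining function over $G$ to get $\rho_i$, then construct and average the normal vector fields successively (the paper notes precisely that the bilinear commutation conditions cannot be arranged by averaging all at once, so the $V_i$ must be "constructed, and averaged, successively," which is exactly your induction exploiting the prior $G$-invariance of the lower-index fields), and finally average a product-type metric built from the invariant data. Your write-up merely supplies more detail at each of these steps than the paper does.
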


\begin{proof} Any collective boundary hypersurface has
  a common defining function, given by any choice of defining function
  near each boundary hypersurface in the set extended to be strictly
  positive elsewhere.  If $\rho_i'$ is such a defining function for
  $\supp(B_i)$ then so is $g^*\rho_i$ for each $g\in G,$ since by
  assumption it permutes the elements of $B_i.$ Averaging over $G$ gives a
  $G$-invariant defining function. Similarly each of the vector fields
  $V_H$ in \eqref{Eqcores.103} is only restricted near $H$ so these can be
  combined to give collective normal vector fields $V_i$ which then have
  the properties in \eqref{Eqcores.110}. Since the commutation conditions
  are bilinear they cannot be directly arranged by averaging, but the
  normal vector fields can be constructed, and averaged, successively.

A product-type metric made up (iteratively) from this invariant data near
the boundary can similarly be averaged to an invariant product-type
metric. In fact the average of any metric for which the boundary faces are
all totally geodesic has the same property.
\end{proof}

One direct consequence of the existence of an invariant product structure
near the boundary is that, as noted above, a smooth group action on a
manifold with corners can be extended to a group action on a closed
manifold. This allows the consideration of the standard properties of group
actions to be extended trivially from the boundaryless case to the case
considered here of manifolds with corners. 

\begin{theorem} \label{DoublingThm} Suppose $M$ is a compact manifold with
  corners with a smooth action by a compact Lie group $G$ -- so assumed to
  satisfy \eqref{Eqcores.108} -- then if $M$ is doubled successively,
  as at the end of \S\ref{mwc}, across the elements of a partition into $l$
  $G$-invariant collective boundary hypersurfaces,
  to a manifold without boundary, $\double{M},$ then there is a smooth
  action of $\bbZ_2^l \times G$ on $\double{M}$ such that $M$ embeds
  $G$-equivariantly into $\double{M}$ as a fundamental domain for the
  $\bbZ_2^l$-action.
\end{theorem}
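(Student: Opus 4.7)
The plan is to prove the theorem by induction on the number $\ell$ of $G$-invariant collective boundary hypersurfaces in the partition, doubling across one collective hypersurface at a time. The key tool is the $G$-invariant boundary product structure provided by Proposition~\ref{Eqcores.109}, which gives invariant defining functions $\rho_i$ and commuting invariant normal vector fields $V_i$ on neighborhoods $U_i$ of $\supp(B_i)$. After shrinking and rescaling, flowing along $V_1$ identifies $U_1$ with $[0,1)\times\supp(B_1)$ in a $G$-equivariant way, with $G$ acting trivially on the $[0,1)$ factor and $\rho_1$ equal to the coordinate on that factor.

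For the base case, doubling $M$ across $B_1$ is defined topologically as $M\sqcup M/{\sim}$, with smooth structure specified by declaring that on a neighborhood of $\supp(B_1)$ the double is identified with $(-1,1)\times\supp(B_1)$, where the normal coordinate is $\pm\rho_1$ on the two copies. Because the collar is $G$-invariant and $G$ acts trivially on the normal factor, the $G$-action on each copy glues to a smooth $G$-action on $2_{B_1}M$; the swap of copies defines a $\bbZ_2$-action sending $\rho_1\mapsto-\rho_1$ near $B_1$, which commutes with $G$ since $G$ preserves $\rho_1$ and the collar projection. The remaining $G$-invariant collective hypersurfaces $B_j$, $j\ge2$, pass to the double: each $H\in B_j$ is glued to its copy along $H\cap \supp(B_1)$ to produce $\widetilde H\subset 2_{B_1}M$, and the commutation $[V_1,V_j]=0$ guarantees that $V_j$ (and the defining function $\rho_j$) extend smoothly across $B_1$, so $2_{B_1}M$ inherits a $G\times\bbZ_2$-invariant boundary product structure with $\ell-1$ invariant collective hypersurfaces $\widetilde B_2,\dots,\widetilde B_\ell$. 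Hence the hypotheses of the theorem apply to $2_{B_1}M$ with group $G\times\bbZ_2$ and with the smaller partition, and the induction proceeds.

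Iterating $\ell$ times produces a boundaryless compact manifold $\double{M}$ carrying a smooth action of $\bbZ_2^\ell\times G$, with $M$ sitting as one of the $2^\ell$ copies in the final double and thus as a fundamental domain for the $\bbZ_2^\ell$-factor. The embedding is $G$-equivariant by construction, since at each step of the induction $M$ maps into the corresponding double as one of the two equivariant halves. The main technical point, and the step requiring the most care, is verifying that after each doubling we retain enough structure to do the next one: one must check that a $G$-invariant boundary product structure on $M$ whose normal vector fields all mutually commute induces such a structure on $2_{B_1}M$ for the larger group $G\times\bbZ_2$. This is exactly where the commutation relations $[V_i,V_j]=0$ in Definition~\ref{Eqcores.123A} do the real work, because they ensure that the identification of the two collars used to form the double matches the vector fields $V_j$ for $j\ge2$ on the nose, so no smoothing or averaging is required at the higher-codimension corners.
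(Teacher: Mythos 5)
Your proposal is correct and follows essentially the same route as the paper: both rest on the $G$-invariant boundary product structure from Proposition~\ref{Eqcores.109}, double across one collective hypersurface at a time with $\rho_1\mapsto-\rho_1$ and $V_1\mapsto-V_1$ on the second copy, observe that the swap gives a $\bbZ_2$-action commuting with $G$, and note that the remaining collective hypersurfaces inherit the partition so the step can be iterated $l$ times. Your explicit remark that the commutation relations $[V_i,V_j]=0$ are what allow the invariant product data to descend to the double for the next step is a slightly more careful statement of what the paper leaves implicit in ``this procedure can be repeated $l$ times,'' but it is the same argument.
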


\begin{proof} (See \cite[Chapter 1]{daomwc} and \cite[\S
    II.1]{BorelJi}) A partition of $\cM_1(M)$ of the stated type does
  exist, as in \eqref{Eqcores.108}. Proposition~\ref{Eqcores.109} shows the
  existence of a $G$-invariant product-type metric, collective boundary
  defining functions and product decompositions near the boundary
  hypersurfaces. First consider the union of two copies of $M,$ denoted
  $M^\pm,$ with all points in $\supp(B_1),$ i\@.e\@.~all points in the
  boundary hypersurfaces in $B_1,$ identified
\begin{equation}
M_1=(M^+\sqcup M^-)\slash\simeq_1,\ p\simeq_1p',\ p=p'\Min H\in B_1.
\label{Eqcores.113}\end{equation}
Now, the local product decompositions near each element of $B_1$ induce a
\ci\ structure on $M_1$ making it again a manifold with corners. Thus
$\rho_1,$ the collective defining function for $B_1$ on $M=M^+$ can be
extended to the smooth function 
\begin{equation}
\rho'_1=\begin{cases}\rho_1&\Mon M^+\\
-\rho_1&\Mon M^-.
\end{cases}
\label{Eqcores.114}\end{equation}
Similarly the corresponding normal vector field $V_1$ extends to be smooth
when defined as $-V_1$ on $M^-.$ The action of $G$ on $M$ gives actions on
$M^\pm$ which are consistent on $\supp(B_1)$ and the product decomposition
of the group action shows that the combined action on $M_1$ is smooth. The
boundary hypersurfaces of $M_1$ fall into two classes. Those arising from
boundary hypersurfaces of $M$ which meet one of the elements of $B_1,$
these appear as the doubles of the corresponding hypersurfaces from $M.$
The boundary hypersurfaces of $M$ which do not meet an element of $B_1$
contribute two disjoint boundary hypersurfaces to $M_1.$ It follows that
the decomposition of $\cM_1(M)$ in \eqref{Eqcores.108} induces a similar
decomposition of $\cM_1(M_1)$ in which each $B_i,$ $i=2,\dots,l$ contains
the preimages of the boundary hypersurfaces of $M,$ other than the elements
of $B_1,$ under the natural projection $M_1\longrightarrow M.$ The $\bbZ_2$
action on $M_1$ given by exchanging signs is smooth, by construction, and
commutes with the $G$-action.

Thus this procedure can be repeated $l$ times finally giving a manifold
without boundary with smooth $G$-action as desired.
\end{proof}

%%%%%%%%%%%%%%%%%%%%%%%%%%%%%%%%%%%%%%%%%%%
\section{Invariant tubes and collars}\label{Collar}
%%%%%%%%%%%%%%%%%%%%%%%%%%%%%%%%%%%%%%%%%%%

As note above the doubling construction allows the standard
properties of group actions on boundaryless manifolds to be transferred to 
the context of manifolds with corners. In fact the standard proofs may also
be extended directly.

If $\zeta \in M$ then the stabilizer $G_\zeta$ acts on $T_\zeta M$ and on
the metric balls, of an invariant product-type metric, in $T_\zeta M.$ If
$\zeta$ is contained in a corner of codimension $k \geq 0,$ then the
exponential map for the metric identifies the set of the inward-pointing
vectors in a small ball in $T_\zeta M$ with a $G_\zeta$-invariant
neighborhood of $\zeta$ in $M$ hence establishes the basic linearization
result.

\begin{proposition}[Bochner] \label{Bochner} If $\zeta \in M$ is contained
  in a corner of codimension $k \geq 0$ then there is a $G_\zeta$-invariant
  neighborhood $\cU_\zeta $ of $\zeta$ in $M,$ a linear action $\alpha
  _\zeta$ of $G_\zeta$ on $\bbR^{m,k},$ and a $G_\zeta$-equivariant
  diffeomorphism $\chi_\zeta : \cU_\zeta \longrightarrow B^+ $ to (the
  inward-pointing part of) a ball $B^+\subset\bbR^{m,k}.$
\end{proposition}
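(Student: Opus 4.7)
The plan is to adapt the classical Bochner linearization argument to the corner setting, using the $G$-invariant product-type metric supplied by Proposition~\ref{Eqcores.109}. First I would fix such a metric $g$ on $M$; since $g$ is $G$-invariant and $G_\zeta$ fixes $\zeta$, differentiation at $\zeta$ produces the desired linear action
\[
\alpha_\zeta : G_\zeta \longrightarrow O(T_\zeta M, g_\zeta),
\]
orthogonal with respect to the inner product $g_\zeta$ on $T_\zeta M$.

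Next I would produce a $G_\zeta$-equivariant identification of the inward-pointing cone $T_\zeta^+ M$ with $\bbR^{m,k}$. Let $H_1,\ldots,H_k$ be the boundary hypersurfaces through $\zeta$, with $G$-invariant defining functions $\rho_i$ and normal vector fields $V_i$ from Proposition~\ref{Eqcores.109}, and let $F$ be the codimension-$k$ boundary face containing $\zeta$. Near $\zeta$ the metric has the product form \eqref{Eqcores.106}, giving the orthogonal splitting
\[
T_\zeta M = \Span\{V_1(\zeta),\ldots,V_k(\zeta)\} \oplus T_\zeta F,
\]
with the $\rho_i \ge 0$ giving the half-space coordinates and $T_\zeta F \cong \bbR^{m-k}$ unrestricted; thus $T_\zeta^+ M \cong [0,\infty)^k \times \bbR^{m-k} = \bbR^{m,k}$. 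Any $h \in G_\zeta$ permutes $H_1,\ldots,H_k$ while fixing $\zeta$, so its differential preserves the splitting up to permutation of the first factor, and in particular carries the inward cone to itself. This makes the above identification $G_\zeta$-equivariant after absorbing the (orthogonal) permutation into $\alpha_\zeta$.

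Then I would use the exponential map of $g$. Because the product-type metric makes each $H_i$ and each $F$ totally geodesic, a geodesic issuing from $\zeta$ with initial velocity $\sum c_i V_i(\zeta) + w$, $w \in T_\zeta F$, $c_i \ge 0$, is given in the local product decomposition simply by $(t c_1,\ldots,t c_k,\gamma_F(t))$, where $\gamma_F$ is the geodesic in $F$ with initial velocity $w$. Such a geodesic stays in $M$ for small $t$, so
\[
\exp_\zeta : T_\zeta^+ M \supset B^+_\epsilon \longrightarrow M
\]
is well defined. Its differential at $0$ is the identity, so for $\epsilon$ sufficiently small it is a diffeomorphism onto a neighborhood $\cU_\zeta$ of $\zeta$. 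The $G_\zeta$-invariance of $g$ yields the standard identity $\exp_\zeta(d h_\zeta\, v) = h \cdot \exp_\zeta(v)$ for $h \in G_\zeta$, so $\cU_\zeta$ is $G_\zeta$-invariant and $\chi_\zeta := \exp_\zeta^{-1}$ is the required equivariant diffeomorphism onto $B^+ \subset \bbR^{m,k}$.

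The only point demanding real attention is behavior of the exponential map at the corner: namely that geodesics tangent to a boundary face remain in that face and that inward-pointing geodesics remain in $M$. This is exactly what the totally-geodesic property of the product-type metric buys us, reducing the exponential at a codimension-$k$ corner to the product of $k$ one-dimensional half-line exponentials with the ordinary exponential on $F$; the case $k=0$ recovers the usual Bochner linearization at an interior point.
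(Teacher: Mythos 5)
Your argument is exactly the one the paper gives (in the sentence preceding the proposition): take a $G$-invariant product-type metric from Proposition~\ref{Eqcores.109}, let $G_\zeta$ act linearly on $T_\zeta M$ via its differential, and use the exponential map to identify the inward-pointing part of a small ball in $T_\zeta M\cong\bbR^{m,k}$ with a $G_\zeta$-invariant neighborhood of $\zeta.$ Your write-up just supplies the details the paper leaves implicit (the totally geodesic boundary faces keeping geodesics in $M,$ and the permutation of the local boundary hypersurfaces by $G_\zeta$), so it is correct and follows the same route.
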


\begin{corollary}\label{FixedPtSet} If $G$ is a compact Lie group acting
  smoothly on a manifold $M,$ then $M^G = \{ \zeta \in M;g\cdot \zeta =
  \zeta \Mforall g \in G \}$ is an interior $p$-submanifold of $M.$
\end{corollary}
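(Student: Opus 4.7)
The plan is to reduce to the linear model furnished by Bochner's theorem (Proposition~\ref{Bochner}) and check that the fixed set of a linear action on the model half-space $\bbR^{m,k}$ is precisely a coordinate subspace. At any $\zeta\in M^G$ we have $G_\zeta=G,$ so the proposition produces a $G$-equivariant diffeomorphism $\chi_\zeta:\cU_\zeta\longrightarrow B^+\subset\bbR^{m,k}$ intertwining the $G$-action with a linear action on $\bbR^{m,k}$ that preserves the half-space $[0,\infty)^k\times\bbR^{m-k}.$ Since the p-submanifold and interior-p-submanifold properties are local, it will suffice to exhibit $M^G\cap\cU_\zeta$ as the common zero set of a subset of the coordinates in $\bbR^{m,k}$ and to observe that this local model meets $\{x_1>0,\ldots,x_k>0\}.$

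The key point, and the main thing to verify carefully, is that the assumption that the action is boundary intersection free forces $G$ to act trivially on the first $k$ coordinates. The $k$ boundary hypersurfaces through $\zeta$ correspond under $\chi_\zeta$ to the coordinate hyperplanes $\{x_i=0\},$ and a linear action fixing the origin permutes them. However, the partition $\cM_1(M)=B_1\sqcup\cdots\sqcup B_l$ from \eqref{Eqcores.108} is $G$-invariant and the hypersurfaces inside any single $B_j$ are pairwise disjoint, hence at most one of them passes through $\zeta.$ It follows that each of the coordinate hyperplanes $\{x_i=0\}$ is individually $G$-invariant, so $G$ acts by a character on each coordinate line $\bbR\cdot e_i.$ Preserving $[0,\infty)\cdot e_i$ and the compactness of $G$ then force each such character to be trivial.

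Consequently the linear $G$-action splits as the trivial action on the $\bbR^k$ factor and some linear action on the $\bbR^{m-k}$ factor, whose fixed set is a linear subspace $V\subset\bbR^{m-k}.$ Thus $\chi_\zeta(M^G\cap\cU_\zeta)=[0,\infty)^k\times V,$ which after a linear change of coordinates in the $\bbR^{m-k}$ factor is the vanishing locus of the last $m-k-\dim V$ coordinates; this is the defining property of a p-submanifold. Finally, since $V\ni 0,$ the local description contains points $(x_1,\ldots,x_k,0)$ with all $x_i>0,$ so the component of $M^G$ through $\zeta$ meets the interior of $M$ and is not contained in $\pa M.$ As $\zeta\in M^G$ was arbitrary, $M^G$ is an interior p-submanifold. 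I expect the main subtlety to be nothing beyond the boundary-intersection-free step, which is the sole place where the standing hypothesis \eqref{Eqcores.108} is essential; without it, $G$ could permute coordinates $x_i$ and $M^G$ would acquire diagonal pieces like $\{x_1=x_2\}$ which are not p-submanifolds.
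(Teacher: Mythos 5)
Your argument is correct and is exactly the intended one: the paper states this as an immediate corollary of Bochner's linearization (Proposition~\ref{Bochner}) without writing out the local computation, and you have supplied it, including the one genuinely necessary observation that the boundary-intersection-free hypothesis \eqref{Eqcores.108} forces each local boundary hyperplane $\{x_i=0\}$ to be individually $G$-invariant, whereupon positivity and compactness kill the resulting characters. The only cosmetic gloss is that splitting off a trivial $\bbR^k$ factor requires a $G$-invariant complement to $\{x_1=\cdots=x_k=0\}$ (automatic here, since the linearization comes from the exponential map of an invariant metric and is therefore orthogonal); with that, the fixed set is a coordinate subspace in product coordinates, as you say.
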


A \emph{slice at $\zeta \in M$} for the smooth action of $G$ is a p-submanifold,
$S,$ of $M$ through $\zeta$ such that
\begin{quote}
i) $T_\zeta M = \alpha_{\zeta}(\df g) \oplus T_\zeta S,$ \\
ii) $T_{\zeta'} M = \alpha_{\zeta'}(\df g) + T_{\zeta'} S$
for all $\zeta' \in S,$ \\
iii) $S$ is $G_\zeta$-invariant, \\
iv) If $g \in G$ and $\zeta' \in S$ are such that $g\cdot \zeta' \in S$
then $g \in G_{\zeta}.$
\end{quote}

For $\eps \in (0,1),$ set
\begin{equation*}
S_\eps = \chi^{-1}_\zeta ( \alpha_\zeta(\df g)^{\perp} \cap B^+(\eps))
\end{equation*}
where $B^+(\epsilon)\subset T_{\zeta}M$ is the set of inward-pointing
vectors of length less than $\eps.$ Since the vector fields in the image of
$\alpha$ are tangent to all of the boundary faces, $S_\eps$ is necessarily a
p-submanifold of $M$ through $\zeta.$ Elements $k \in G_\zeta$ satisfy
$T_\zeta A(k)(\alpha_\zeta(X)) = \alpha_\zeta(\Ad k(X)),$ so the tangent
action of $G_\zeta$ preserves $\alpha_\zeta(\df g)$ and hence $S_\eps$ is
$G_\zeta$-invariant. The Slice Theorem for boundaryless manifolds
\cite[Theorem 2.3.3]{Duistermaat-Kolk}, applied to $\double M,$ shows that
$S_\eps$ is a slice for the $G$-action at $\zeta$ if $\eps$ is small
enough.

Similarly, the following result is \cite[Theorem 2.4.1]{Duistermaat-Kolk}
applied to $\double M.$

\begin{proposition}[Tube Theorem] \label{TubeThm} If $G$ acts smoothly on a
  manifold $M$ and $\zeta \in M,$ then there is a representation space $V$
  of $G_\zeta$ with $G_\zeta$-invariant subset $V^+$ of the form
  $\bbR^{\ell,k},$ a $G$-invariant neighborhood $U$ of $\zeta \in M,$ a
  $G_\zeta$-invariant neighborhood, $V,$ of the origin in $V^+$ and a
  $G$-equivariant diffeomorphism
\begin{equation*}
\phi: G \times_{G_\zeta} V^+\longrightarrow U\ \Mst \phi(0) = \zeta.
\end{equation*}
\end{proposition}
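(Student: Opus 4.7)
The plan is to reduce to the classical Tube Theorem on boundaryless manifolds by invoking the doubling construction of Theorem~\ref{DoublingThm}. Using the $G$-invariant partition \eqref{Eqcores.108} of $\cM_1(M)$, iteratively double $M$ to obtain a compact boundaryless manifold $\double M$ carrying a smooth action of $\bbZ_2^l \times G$, in which $M$ embeds $G$-equivariantly as a fundamental domain for the $\bbZ_2^l$-factor. The $G$-stabilizer of $\zeta$ is unchanged by this embedding, while its stabilizer in the full group $\bbZ_2^l\times G$ also contains the reflections in the boundary hypersurfaces through $\zeta$.

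Next, apply \cite[Theorem 2.4.1]{Duistermaat-Kolk} to the $G$-action on $\double M$ at $\zeta$ to obtain a $G_\zeta$-representation $W$, a $G_\zeta$-invariant open neighborhood $\wt V\subset W$ of the origin, and a $G$-equivariant diffeomorphism $\wt\phi : G\times_{G_\zeta}\wt V \longrightarrow \wt U$ onto a $G$-invariant neighborhood of $\zeta$ in $\double M$, sending $[\id,0]$ to $\zeta$. Since the $\bbZ_2^l$-action commutes with the $G$-action and fixes the orbit $G\cdot\zeta$ setwise, one may average $\wt\phi$ over the subgroup $\bbZ_2^k\subset\bbZ_2^l$ that fixes $\zeta$, where $k$ is the number of collective boundary hypersurfaces through $\zeta$. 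After this averaging $\wt\phi$ intertwines the $\bbZ_2^k$-action on $\double M$ near $\zeta$ with its linearization on $W$.

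To conclude, choose the invariant product-type metric on $\double M$ furnished by Proposition~\ref{Eqcores.109} (extended $\bbZ_2^l$-symmetrically via the doubling) so that the $\bbZ_2^k$-action at $\zeta$ decomposes $W$ orthogonally into one-dimensional reflection representations transverse to the boundary hypersurfaces through $\zeta$ and into $\bbZ_2^k$-trivial directions tangent to the minimal boundary face. The inward-pointing fundamental domain $V^+\subset W$ for this $\bbZ_2^k$-action is then $G_\zeta$-invariant and has the form $\bbR^{\ell,k}$; setting $V = \wt V\cap V^+$ and $U = \wt U\cap M$ yields the required $G$-equivariant diffeomorphism $\phi : G\times_{G_\zeta} V^+ \longrightarrow U$. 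The main obstacle is the bookkeeping needed to show that the $\bbZ_2^k$-linearization really splits $W$ into reflection and trivial summands so that $V^+$ is a smooth half-space $\bbR^{\ell,k}$; this is precisely where the compatibility of the doubling with the $G$-invariant product structure of Proposition~\ref{Eqcores.109} is indispensable.
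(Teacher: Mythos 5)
Your approach is essentially the paper's: the entire proof given there is the single remark that the result is \cite[Theorem 2.4.1]{Duistermaat-Kolk} applied to $\double M,$ i.e.\ exactly the doubling reduction you carry out, with the restriction back to the fundamental domain $M$ left implicit. The extra bookkeeping you supply for that restriction is in the right spirit, though the step ``average $\wt\phi$ over $\bbZ_2^k$'' is not a well-defined operation on diffeomorphisms as stated (it needs the Bochner-style center-of-mass argument in a chart) and is better avoided altogether by building the tube from the exponential map of the $\bbZ_2^l\times G$-invariant product-type metric of Proposition~\ref{Eqcores.109}, which makes the tube invariant under the boundary reflections from the outset and exhibits $V^+$ directly as the inward-pointing set $\bbR^{\ell,k}$ in $T_\zeta M.$
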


It is straightforward to check (see \cite[Lemma 4.16]{Kawakubo}) that the
$G$-isotropy group of $[(g, v)] \in G \times_{G_\zeta} V$ is conjugate (in
$G$) to the $G_\zeta$ isotropy group of $v$ in $V.$ Thus, if $U$ is a
neighborhood of $\zeta$ as in Proposition \ref{TubeThm} and $\zeta' \in U,$
then
\begin{equation}\label{IsotropyShrink}
G_{\zeta'} \text{ is conjugate to a subgroup of } G_{\zeta}.
\end{equation}

Exponentiation using a product-type $G$-invariant metric allows a
neighborhood of a $G$-invariant p-submanifold $X\subseteq M$ to be
identified with a neighborhood of the zero section of its normal bundle.

\begin{proposition}[Collar Theorem] \label{ColThm} If $G$ acts smoothly on
  a manifold $M$ and $X \subseteq M$ is a $G$-invariant interior
  p-submanifold, then there exists a $G$-invariant neighborhood $U$ of $X$
  in $M$ and a $G$-invariant diffeomorphism from the normal bundle $NX$ of
  $X$ to $U$ that identifies the zero section of $NX$ with $X$ and for all
  sufficiently small $\eps>0$ the submanifolds
\begin{equation*}
	\cS_\eps(X) = \{ \zeta \in M; d(\zeta, X) = \eps \}
\end{equation*}
are $G$-invariant and the $G$-actions on $S_\eps(X)$ and $S_{\eps'}(X)$
are intertwined by translation along geodesics normal to $X.$
\end{proposition}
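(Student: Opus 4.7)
The plan is to obtain the collar by exponentiating from a $G$-invariant normal bundle along geodesics of a $G$-invariant product-type metric. First I would use Proposition~\ref{Eqcores.109} to fix a $G$-invariant boundary product structure with associated $G$-invariant product-type metric $g$. Since $X$ is an \emph{interior} p-submanifold, Proposition~\ref{Eqcores.118} allows me to arrange that each normal vector field $V_H$ is tangent to $X$, and this tangency is preserved by $G$-averaging because the averaging ranges over diffeomorphisms that preserve both $X$ and the partition $\{B_i\}$ of $\cM_1(M)$. In particular $V_H|_X \subset TX$, so the orthogonal complement $NX = (TX)^\perp \subset TM|_X$ lies inside $TF$ whenever a face $F \in \cM_k(M)$ contains the basepoint.

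Next, I would define the tube map $\tau : NX \to M$ by $\tau(\zeta, v) = \exp_\zeta^g(v)$. By Corollary~\ref{Eqcores.105} every boundary face is totally geodesic for $g$, so a geodesic launched at $\zeta \in X \cap F$ with initial velocity in $N_\zeta X \subseteq T_\zeta F$ stays in $F$; equivalently, $\tau$ respects the corner stratification. The differential of $\tau$ along the zero section is the identity, so $\tau$ is a local diffeomorphism near each point of $X$ (applying the inverse function theorem on the interior of each stratum and gluing using the product structure). A compactness argument on the $G$-orbit structure of $X$ then produces $\eps > 0$ such that $\tau$ restricts to a diffeomorphism from the open disk bundle $\{v \in NX : |v|_g < \eps\}$ onto an open neighborhood $U$ of $X$.

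For $G$-equivariance, since $g$ and $X$ are both $G$-invariant, the induced linear $G$-action on the fibers of $NX$ preserves $g$-norms and the geodesic flow is $G$-equivariant; hence $\tau$ is $G$-equivariant and $U$ is $G$-invariant. The distance spheres $\Sc_\eps(X)$ correspond under $\tau$ to the sphere bundles $\{|v|_g = \eps\} \subset NX$, and the radial dilation $(\zeta, v) \mapsto (\zeta, (\eps'/\eps)v)$ on $NX$ commutes fiberwise with the linear $G$-action and corresponds under $\tau$ to translation along the normal geodesics; this intertwines the $G$-actions on $\Sc_\eps(X)$ and $\Sc_{\eps'}(X)$.

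The main obstacle is verifying that $\tau$ is actually a diffeomorphism in the category of manifolds with corners rather than merely on interiors. This reduces entirely to the tangency arrangement above: with the $V_H$ tangent to $X$ the normal directions at a boundary point lie inside the smallest face through that point, and total geodesy then confines $\tau$ to act face by face, reducing the argument to the boundaryless inverse function theorem on each stratum. If one wished to sidestep this analysis entirely, Theorem~\ref{DoublingThm} provides an alternative: double $M$ to a closed manifold $\double{M}$ with smooth $\mathbb{Z}_2^l \times G$-action, invoke the classical $G$-invariant tubular neighborhood theorem there, and restrict; the product-structure invariance under the $\mathbb{Z}_2^l$ reflections identifies the restriction with the $NX$ and $\tau$ constructed above.
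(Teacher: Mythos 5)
Your proposal is correct and follows essentially the same route as the paper: exponentiate the normal bundle using the $G$-invariant product-type metric of Proposition~\ref{Eqcores.109} and deduce equivariance of the tube, the distance spheres, and the radial translation from the fact that $G$ acts by isometries and short geodesics are unique minimizers. The corner-category details you supply (tangency of the $V_H$ to $X$ and totally geodesic faces) are exactly what the paper delegates to its earlier statement that the collar neighborhood theorem holds for p-submanifolds, and your doubling alternative via Theorem~\ref{DoublingThm} is the same device the paper uses for the Slice and Tube Theorems.
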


\begin{proof} As a p-submanifold, $X$ has a tubular neighborhood in $M,$
  which by exponentiating we can identify with
\begin{equation}\label{Ueps}
	U_\eps = \{\zeta \in M; d(\zeta, X) \leq \eps \}.
\end{equation}

For $\eps$ small enough, each $\zeta \in U_\eps$ is connected to $X$ by a
unique geodesic of length less than $\eps,$ $\gamma_\zeta.$ Since the
$G$-action is distance preserving and short geodesics are the unique
length-minimizing curves between their end-points,
\begin{equation}
g\cdot \gamma_{\zeta} = \gamma_{g\cdot \zeta}, \Mforevery g \in G,\
\zeta \in U_{\eps}.
\label{Resolution.July13.1}\end{equation}
It follows that $G$ preserves $S_{\eps'}(X)$ for all $\eps' <\eps$ and
that translation along geodesics normal to $X$ intertwines the
corresponding $G$-actions, as claimed.
\end{proof}

If $\Phi: M \longrightarrow Y$ is an equivariant fibration and $G$ acts
trivially on $Y,$ we can find a $G$-invariant submersion metric.
Exponentiating from a fiber of $\Phi,$ $\Phi^{-1}(q),$ gives an equivariant
identification with a neighborhood of the form $\Phi^{-1}(q) \times U,$ which
establishes the following result.

\begin{proposition}
Suppose $G$ acts on the manifolds $M$ and $Y,$ $\Phi:M \longrightarrow Y$ 
is an equivariant fibration, and the action of $G$ on $Y$ is trivial, 
then the fibers of $\Phi$ are $G$-equivariantly diffeomorphic.
\label{Eqcores.F.1}\end{proposition}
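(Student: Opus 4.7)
The plan is to produce local $G$-equivariant trivializations of $\Phi$ via horizontal lifting in a $G$-invariant submersion metric, and then conclude equivariant diffeomorphism of fibers within each component of $Y$ by a standard connectedness argument.

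First I would build a $G$-invariant submersion metric on $M$, adapted both to the boundary and to the fibration $\Phi$. Starting from the local trivializations in \eqref{Eqcores.120}, one can glue together product-type metrics that make $\Phi$ a Riemannian submersion; averaging over $G$ preserves the submersion property because $G$ acts trivially on $Y$, so the vertical subbundle $\ker d\Phi \subseteq TM$ is $G$-invariant and its orthogonal complement --- the horizontal distribution $\mathcal{H}$ --- is $G$-invariant too. I would also arrange, as in Proposition \ref{Eqcores.109}, that the metric is of product type near the boundary faces so that $\mathcal{H}$ is tangent to every boundary face of $M$; this uses that $\Phi$ sends boundary faces to boundary faces.

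Next, fix $q$ in a component of $Y$ and choose a geodesically convex neighborhood $U$ of $q$. For $\zeta \in \Phi^{-1}(q)$ and $y \in U$, let $\widetilde{\gamma}_{\zeta,y}$ denote the horizontal lift, starting at $\zeta$, of the radial geodesic in $Y$ from $q$ to $y$. Tangency of $\mathcal{H}$ to each boundary face guarantees that this lift stays inside $M$ for $t \in [0,1]$, and setting
\begin{equation*}
\Psi : \Phi^{-1}(q) \times U \longrightarrow \Phi^{-1}(U), \qquad (\zeta,y) \longmapsto \widetilde{\gamma}_{\zeta,y}(1),
\end{equation*}
yields a diffeomorphism whose inverse is reverse horizontal flow. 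Since $\mathcal{H}$ is $G$-invariant and $G$ acts trivially on $Y$, the horizontal lift is $G$-equivariant in $\zeta$, so $\Psi$ intertwines the diagonal $G$-action on $\Phi^{-1}(q) \times U$ (trivial on the second factor) with the restricted $G$-action on $\Phi^{-1}(U)$. In particular, for each $y \in U$ the restriction $\Psi(\cdot,y) : \Phi^{-1}(q) \to \Phi^{-1}(y)$ is a $G$-equivariant diffeomorphism.

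To finish, the set of $y$ in the connected component of $q$ for which $\Phi^{-1}(y)$ is $G$-equivariantly diffeomorphic to $\Phi^{-1}(q)$ is both open, by the construction above, and closed, by the same argument applied at any limit point; hence it exhausts the component. The main technical point --- and the only place where the corner structure enters nontrivially --- is the initial adaptation of the submersion metric so that horizontal lifts remain in $M$ up to all the corners; once this is in hand, the remainder is a direct averaging-plus-horizontal-transport argument.
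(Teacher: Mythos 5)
Your proof is correct and follows essentially the same route as the paper, which also constructs a $G$-invariant submersion metric and exponentiates off a fiber to obtain the equivariant local trivialization $\Phi^{-1}(q)\times U$, concluding by connectedness. Your write-up is simply more explicit about the two points the paper leaves implicit, namely the tangency of the horizontal distribution to the boundary faces (so that horizontal lifts stay in $M$) and the open-and-closed argument on each component of $Y$.
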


%%%%%%%%%%%%%%%%%%%%%%%%%%%%%%%%%%%%%%%%%%%
\section{Boundary resolution}\label{sec:BounRes}
%%%%%%%%%%%%%%%%%%%%%%%%%%%%%%%%%%%%%%%%%%%

In this section the first steps towards resolution of a group action by
radial blow-up are taken. Namely it is shown that on the blow-up of a
$G$-invariant closed p-submanifold, $X,$ the group action extends smoothly,
and hence uniquely, from $M\setminus X$ to $[M;X];$ the blow-down map is
then equivariant. Using this it is then shown that any smooth action, not
requiring \eqref{Eqcores.108}, on a manifold with corners lifts to a
boundary intersection free action, i\@.e\@.~one which does satisfy
\eqref{Eqcores.108}, after blowing-up appropriate boundary faces.

Let $\cJ(M)$ be the set of isotropy groups for a smooth action of $G$ on $M.$ 

\begin{proposition} \label{EquivBlowup} If $X \subseteq M$ is a
  $G$-invariant closed p-submanifold for a smooth action by a compact Lie
  group, $G,$ on $M$ then $[M;X]$ has a unique smooth $G$-action such that
  the blow-down map $\beta:[M;X]\longrightarrow M$ is equivariant and
\begin{equation}
\cJ([M;X])=\cJ(M \setminus X).
\label{Eqcores.223}\end{equation}
\end{proposition}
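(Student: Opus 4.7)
The plan is to construct the lift by combining three ingredients: the density of $M\setminus X$ in $[M;X],$ an equivariant collar identification of a neighborhood of $X$ with its normal bundle, and the elementary observation that a linear isometric action on a vector bundle lifts smoothly to the radial blow-up of its zero section. Uniqueness is immediate, since any two smooth extensions of the action from the open dense set $M\setminus X=[M;X]\setminus\ff$ must agree.

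For existence, begin with a $G$-invariant product-type metric on $M$ (Proposition~\ref{Eqcores.109}) and use the equivariant exponential map to obtain, as in the proof of Proposition~\ref{ColThm}, a $G$-equivariant diffeomorphism from a neighborhood of the zero section of the inward-pointing normal bundle $N^+X$ to a $G$-invariant neighborhood $U$ of $X$ in $M.$ Since $G$ acts by isometries, its action on $N^+X$ covers the action on $X$ and is linear and isometric on each fiber. Now the blow-up $[M;X]$ is locally modeled, near $\ff,$ on $[N^+X;0_X]$ where $0_X$ is the zero section. In fiberwise polar coordinates $(x,r,\omega)$ with $r=|v|\ge 0$ and $\omega=v/|v|\in S^+_xX,$ an element $g\in G$ acts by $(x,r,\omega)\mapsto(g\cdot x,r,g_*\omega),$ which is manifestly smooth down to $r=0.$ This extends the action of $G$ on $M\setminus X$ smoothly across $\ff,$ producing the desired smooth $G$-action on $[M;X].$ Equivariance of $\beta$ is built into the construction, since on the polar model $\beta(x,r,\omega)=\exp_x(r\omega)$ intertwines the two actions.

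For \eqref{Eqcores.223}, the inclusion $\cJ(M\setminus X)\subseteq\cJ([M;X])$ is immediate since $\beta$ restricts to a $G$-equivariant diffeomorphism $[M;X]\setminus\ff\to M\setminus X.$ For the reverse inclusion, let $p\in\ff$ correspond to $(x,[v])$ with $x\in X$ and $v\in N^+_xX\setminus 0.$ From the polar description the isotropy is
\begin{equation*}
G_p=\{g\in G_x : g_*[v]=[v]\},
\end{equation*}
and since $g_*$ acts isometrically on $N_xX,$ the condition $g_*[v]=[v]$ is equivalent to $g_*v=v.$ Applying the Tube Theorem (Proposition~\ref{TubeThm}) at $x,$ the point $q=\exp_x(tv)\in M\setminus X$ corresponds for small $t>0$ to $[(e,tv)]\in G\times_{G_x}V^+,$ whose isotropy is exactly $\{g\in G_x: g_*v=v\}=G_p.$ Hence $G_p\in\cJ(M\setminus X),$ establishing the reverse inclusion.

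\textbf{Expected obstacle.} The only genuinely delicate step is verifying smoothness of the lifted action across the front face; this is why an \emph{invariant} product-type metric (equivalently, an invariant collar of $X$) is essential. Once one is in the linear model on $N^+X,$ passage to polar coordinates makes the smoothness evident, but without the invariant collar the action in arbitrary coordinates near $\ff$ need not be smooth in $r,$ and the equivariant Tube/Collar theorems of \S\ref{Collar} are precisely what is needed to avoid this.
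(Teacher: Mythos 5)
Your proof is correct and follows essentially the same route as the paper: both arguments rest on the $G$-invariant product-type metric and the equivariant collar of $X,$ with the paper compressing your polar-coordinate computation into the statement that $[M;X]$ is $G$-equivariantly diffeomorphic to $M\setminus U_\eps,$ and both identify the isotropy groups on the front face with those occurring on a small distance sphere $\cS_\eps(X)\subset M\setminus X.$ Your explicit verification that $g_*[v]=[v]$ forces $g_*v=v$ (via isometry) and the appeal to the Tube Theorem make precise what the paper leaves implicit, but no new idea is involved.
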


\begin{proof}
The blown-up manifold is
\begin{equation*}
	[M;X] = N^+X \sqcup (M \setminus X)
\end{equation*}
with smooth structure consistent with the blow up of the normal bundle to
$X$ along its zero section. Thus $[M;X]$ is diffeomorphic to $M \setminus U_\eps$ with
$U_{\eps}$ as in \eqref{Ueps}. This diffeomorphism induces a smooth
$G$-action on $[M;X]$ with respect to which the blow-down map is equivariant.
The result for isotropy groups, \eqref{Eqcores.223}, follows
from \eqref{IsotropyShrink}, namely the isotropy groups away from the front
face of $[M;X]$ are certainly identified with those in $M\setminus X$ and
the isotropy groups on $\ff([M;X])$ are identified with those in $S_\eps$
for small $\eps>0.$
\end{proof}

A general smooth group action will lift to be boundary intersection free
on the {\em total boundary blow-up} of $M.$ This manifold $M_{\tb},$
discussed in \cite[\S2.6]{Hassell-Mazzeo-Melrose}, is obtained from $M$ by
blowing-up all of its boundary faces, in order of increasing
dimension. Blowing up all of the faces of dimension less than $k$ separates
all of the faces of dimension $k$ so these can be blown-up in any order
without changing the final space which is therefore well-defined up to
canonical diffeomorphism.

In Figure \ref{fig:BdyFree} the octagon is obtained from the square by 
blowing-up the corners and the $\bbZ/4$-action lifts from the square to the 
boundary intersection free action on the octagon.

\begin{proposition}\label{BounRes} If $G$ acts smoothly on a manifold $M,$
  without necessarily satisfying \eqref{Eqcores.108}, the induced action of
  $G$ on $M_{\tb}$ is boundary intersection free, i\@.e\@. does satisfy
  \eqref{Eqcores.108}.
\end{proposition}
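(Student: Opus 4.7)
The plan is to build $M_{\tb}$ inductively as a sequence of equivariant radial blow-ups of $M,$ using Proposition~\ref{EquivBlowup} to lift the $G$-action at each stage, and then exhibit a $G$-invariant partition of $\cM_1(M_{\tb})$ whose blocks consist of pairwise disjoint hypersurfaces.

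First, I would realize $M_{\tb}$ as the result of successively blowing up all boundary faces of $M$ of codimensions $k = m, m-1, \ldots, 2,$ in decreasing order of codimension. Let $M^{(k)}$ denote the intermediate manifold obtained after all faces of codimension strictly greater than $k$ (and their proper transforms) have been blown up, with $M^{(m)} = M.$ The set $\cM_k(M)$ is $G$-invariant, since $G$ acts by diffeomorphisms and so preserves codimension; after the previous blow-ups the (proper transforms of the) elements of $\cM_k(M)$ are pairwise disjoint p-submanifolds of $M^{(k)},$ because any two of them could intersect only in components of $\cM_{k'}(M)$ with $k' > k,$ each of which has already been blown up. Their disjoint union is therefore a $G$-invariant closed p-submanifold of $M^{(k)},$ and Proposition~\ref{EquivBlowup} produces a smooth $G$-action on $M^{(k-1)} = [M^{(k)}; \cM_k(M^{(k)})]$ for which the blow-down map is equivariant. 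Iterating down to $k = 2$ gives the $G$-action on $M_{\tb}.$

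Applying the identification \eqref{Eqcores.3} at each stage, $\cM_1(M_{\tb})$ is canonically indexed by the boundary faces of $M$ of all codimensions $\geq 1$: each $F \in \cM_1(M)$ contributes its iterated lift $\wt F,$ and each $F \in \cM_k(M)$ with $k \geq 2$ contributes the front face $\wt F$ created when the lift of $F$ is blown up. Set $B_k = \{\wt F : F \in \cM_k(M)\}$ for $k = 1, \ldots, m;$ this partitions $\cM_1(M_{\tb}).$ Since the $G$-action on $M$ permutes each $\cM_k(M),$ the induced action on $\cM_1(M_{\tb})$ sends each $B_k$ to itself, so only the within-block disjointness remains to be verified.

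The main technical step is this disjointness, which I expect to be the only real obstacle and would check by a local coordinate calculation. For distinct $F_1, F_2 \in \cM_k(M),$ each component of $F_1 \cap F_2$ lies in some $\cM_{k'}(M)$ with $k' > k$ and has therefore been blown up at an earlier stage. In local product coordinates $(x_1, \ldots, x_\ell, y)$ near such a component $F',$ the face $F_i$ is the vanishing locus of a specific subset $S_i$ of the boundary coordinates; since $\dim F_1 = \dim F_2 < \dim F',$ we have $S_1 \neq S_2$ while $F' = \{x_j = 0 : j \in S_1 \cup S_2\}.$ Introducing polar coordinates around $F'$ (when it is blown up) replaces these coordinates by an angular variable in the simplex $\{\theta_j \geq 0, \sum \theta_j = 1\},$ and the proper transforms of $F_1$ and $F_2$ correspond to the disjoint faces $\{\theta_j = 0 : j \in S_1\}$ and $\{\theta_j = 0 : j \in S_2\};$ since $S_1 \neq S_2$ these cannot share a point. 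Iterating this observation over the successive blow-ups shows $\wt F_1 \cap \wt F_2 = \emptyset$ in $M_{\tb},$ so each $B_k$ is a collective boundary hypersurface and the $G$-action on $M_{\tb}$ satisfies \eqref{Eqcores.108}, i.e.\ is boundary intersection free.
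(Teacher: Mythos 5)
Your proof is correct and follows essentially the same route as the paper's: partition $\cM_1(M_{\tb})$ according to the codimension of the boundary face of $M$ lying under each hypersurface, note that $G$ acts by diffeomorphisms and so preserves that codimension, and invoke the fact that the total boundary blow-up separates the lifts of distinct faces of equal codimension. The only difference is one of detail \mhy\ the paper simply cites this separation property from the construction of $M_{\tb}$ in \cite{Hassell-Mazzeo-Melrose}, whereas you verify it explicitly via the inductive construction and the simplex coordinate computation.
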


\begin{proof} Let $\beta: M_{\tb}\longrightarrow M$ be the blow-down
  map. Any boundary hypersurface $Y$ of $M_{\tb}$ is the lift of a boundary
  face $F= \beta(Y)$ of $M.$ Since each element $G$ acts on $M$ by a
  diffeomorphism it sends $\beta(Y)$ to a boundary face of $M$ of the same
  dimension as $F,$ say $F'= \beta(Y').$ The induced action on $M_{\tb}$
  will send the boundary face $Y$ to $Y'$ and, from the definition of
  $M_{\tb},$ $Y'$ is either equal to $Y$ or disjoint from $Y.$ Hence the
  action of $G$ on $M_{\tb}$ is boundary intersection free.  \end{proof}

In fact it is generally possible to resolve an action to be boundary
intersection free by blowing up a smaller collection of boundary
faces. Namely, consider all the boundary faces which have the property that
they are a component of an intersection $H_1\cap\dots\cap H_N$ where the
$H_i\in\cM_1(M)$ are intertwined by $G,$ meaning that for each $1\le i<j\le
N$ there is an element $g_{ij}\in G$ such that $g_{ij}(H_j)=H_i.$ This
collection of boundary faces satisfies the chain condition that if $F$ is
an element and $F'\supset F$ then $F'$ is also an element. In fact this
collection of boundary faces is divided into transversal subcollections
which are closed under intersection and as a result the manifold obtained
by blowing them up in order of increasing dimension is well-defined. It is
straightforward to check that the lift of the $G$-action to this partially
boundary-resolved manifold is boundary intersection free.

%%%%%%%
\section{Resolution of $G$-actions}\label{sec:Resolution}
%%%%%%%

The set, $\cJ(M),$ of isotropy groups which occur in a smooth $G$-action is
necessarily closed under conjugation, since if $G_\zeta\in\cJ$ then
$G_{g\zeta}=gG_\zeta g^{-1}.$ Let $\cI=\cJ/G$ be the set of conjugacy
classes of isotropy groups for the action of $G$ on $M$ and for each
$I\in\cI$ let
\begin{equation}
M^I=\{\zeta\in M;G_{\zeta}\in I\},
\label{Eqcores.224}\end{equation}
be the corresponding \emph{isotropy type.} Proposition~\ref{TubeThm} shows
these to be smooth p-submanifolds and they stratify $M,$ with a natural
partial order
\begin{equation*}
I' \preccurlyeq I\Mor M^I\preccurlyeq M^{I'}\Mif 
K\in I\text{ is conjugate to a subgroup of an element of }I'.
\end{equation*}
Thus minimal elements with respect to $\preccurlyeq$ are the ones with the
largest isotropy groups. We also set 
\begin{equation}
M_I=\clos(M^I)\subset\bigcup_{I'\preccurlyeq I} M^{I'}
\label{Eqcores.225}\end{equation}

\begin{proposition}\label{Eqcores.229} The isotropy types $M^I\subset M$
  for a smooth action by a compact group $G$ form a finite collection of
  p-submanifolds each with finitely many components.
\end{proposition}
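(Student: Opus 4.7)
The plan is to combine the local linearization from the Tube Theorem (Proposition \ref{TubeThm}) with the classical finiteness of orbit types for a linear action of a compact Lie group. By compactness of $M$ and the Tube Theorem, I can cover $M$ with finitely many $G$-invariant tubes of the form $G \times_{G_\zeta} V^+$. By \eqref{IsotropyShrink}, in each such tube every isotropy group is $G$-conjugate to an isotropy subgroup of the linear $G_\zeta$-action on the representation space $V$, so both finiteness of $\cI$ and the local structure of $M^I$ reduce to the corresponding questions for a linear action.

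The key step is to show that a linear action of a compact Lie group on a finite-dimensional real vector space has only finitely many conjugacy classes of isotropy subgroups. I would argue by induction on $\dim V$: letting $N$ be the kernel of the action, the generic (principal) isotropy is $N$, while any isotropy $H \supsetneq N$ fixes a proper linear subspace $V^H \subsetneq V$, on which a strictly lower-dimensional linear action occurs and the inductive hypothesis applies. Finitely many such $V^H$ suffice since the $G_\zeta$-orbit of $V^H$ depends only on the conjugacy class of $H$, and the codimension of $V^H$ is a nontrivial discrete invariant that forces the induction to terminate. Combined with the finite tube cover, this yields finiteness of $\cI$.

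For the p-submanifold claim, in each tube $M^I$ corresponds to the image in $G \times_{G_\zeta} V^+$ of the set of $v \in V^+$ whose $G_\zeta$-stabilizer lies in one of finitely many $G_\zeta$-conjugacy classes $\{H_0\}$. For each such $H_0$, the locus of points with stabilizer \emph{exactly} $H_0$ is an open subset of the linear fixed-point set $V^{H_0} \cap V^+$, which is a p-submanifold of $V^+$ (the $G_\zeta$-action being linear and, by the corners version of the Tube Theorem, preserving the partition of coordinates into boundary-defining and interior ones), and inducing up via $G \times_{G_\zeta} (\cdot)$ preserves this property. Patching over the finite cover gives the global statement, and the finiteness of components of each $M^I$ follows from compactness of $M$ together with the local description in each tube.

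The main obstacle I anticipate is the inductive finiteness argument for linear actions: one must ensure that the stabilizers appearing in $V^H$ (for varying $H$) inject, up to $G_\zeta$-conjugation, into a finite set, which requires tracking how stabilizers of a smaller linear action embed back into $G_\zeta$. A secondary subtlety is verifying in the corners setting that $V^{H_0} \cap V^+$ is genuinely a p-submanifold of $\bbR^{\ell,k}$; this holds because the linear $G_\zeta$-action must preserve the set of boundary-defining coordinates as a whole, so $V^{H_0}$ is cut out by a subset of linear equations compatible with the corner structure.
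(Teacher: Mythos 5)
Your overall strategy --- cover $M$ by finitely many tubes via Proposition \ref{TubeThm}, reduce to linear actions of the isotropy groups using \eqref{IsotropyShrink}, and establish finiteness for linear actions by induction --- is essentially the classical proof of finiteness of orbit types. It is not, however, the route the paper takes: the paper's proof is a two-line reduction, doubling $M$ to the boundaryless manifold $\double{M}$ via Theorem \ref{DoublingThm}, citing the boundaryless case from Duistermaat--Kolk, and obtaining the p-submanifold property from Corollary \ref{FixedPtSet}. Your version amounts to re-proving the cited result directly in the corners category, which is legitimate but means you must carry the full weight of the classical induction yourself.

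That is where there is a genuine gap: your inductive step for linear actions is circular. You propose to induct on $\dim V$ by restricting to the fixed subspaces $V^H$ for isotropy groups $H$ strictly containing the kernel, and you justify passing to ``finitely many such $V^H$'' by noting that the orbit of $V^H$ depends only on the conjugacy class of $H$. But finiteness of the family $\{V^H\}$ up to the group action is essentially equivalent to the finiteness of conjugacy classes of isotropy groups that you are trying to prove; the codimension of $V^H$ being a discrete invariant does not bound how many subspaces of a fixed codimension occur. (For the same reason, your claim that the locus with stabilizer exactly $H_0$ is open in $V^{H_0}$ already presupposes that its complement is a \emph{finite} union of proper subspaces.) The standard repair is to change the induction variable: take the $G_\zeta$-action on $V$ orthogonal, observe that isotropy is constant along rays so the isotropy types of $V\setminus\{0\}$ are those of the compact unit sphere $S(V)$, whose dimension is strictly less than $\dim M$, and induct on the dimension of the manifold, applying the tube theorem again on $S(V)$. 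With that fix the remainder of your argument --- the local description of $M^I$ in each tube as an open subset of $G\times_{G_\zeta}(V^{H_0}\cap V^+)$, the compatibility of $V^{H_0}$ with the corner structure, and the finiteness of components --- goes through; alternatively you could sidestep the linear analysis entirely by doubling, as the paper does.
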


\begin{proof} In \cite[Proposition 2.7.1]{Duistermaat-Kolk}, this result is
  shown for boundaryless manifolds. By passing from $M$ to $\double{M}$ as
  in Theorem \ref{DoublingThm}, the same is true for manifolds with
  corners with the local product condition implying that $M^I$ is a
  p-submanifold following from Proposition~\ref{FixedPtSet}.
\end{proof}

\begin{definition}\label{Eqcores.226} A \emph{resolution} of a smooth
  $G$-action on a compact manifold $M$ (with corners) is a manifold, $Y,$
  obtained by the successive blow up of closed $G$-invariant p-submanifolds
  of $M$ to which the $G$-action lifts to have a unique isotropy type.
\end{definition}
\noindent Proposition~\ref{EquivBlowup} shows that there is a unique lifted
$G$-action such that the iterated blow-down map is $G$-equivariant.

Such a resolution is certainly not unique -- as in the preceding section,
in the case of manifolds with corners, it is always possible to blow up a
boundary face in this way, but this is never required for the resolution
of an action satisfying \eqref{Eqcores.108}. We show below that there is a
canonical resolution obtained by successively blowing up minimal isotropy
types. To do this we note that the blow-ups carry additional structure.

\begin{definition}\label{Eqcores.227} An \emph{equivariant resolution
    structure} for a $G$ action on a manifold $Y$ is a resolution
  structure, in the sense of Definition~\ref{Eqcores.121}, with
  $G$-equivariant fibrations to bases each with unique isotropy type and
  such that in addition none of the isotropy types in any base is present
  in the total space. A \emph{full resolution} for a $G$-action on a
  manifold, $M,$ is a resolution in the sense of
  Definition~\ref{Eqcores.226} where $Y$ carries such an equivariant
  resolution structure.
\end{definition}

\begin{proposition}\label{Eqcores.228} Let $M$ be a smooth manifold with a
  smooth boundary intersection free action by a compact Lie group $G$ and
  an equivariant resolution structure, then any minimal isotropy type $X=M^I$ is a
  closed interior p-submanifold and if it is transversal to the fibers of
  all the boundary fibrations then $[M;X]$ has an induced equivariant
  resolution structure.
\end{proposition}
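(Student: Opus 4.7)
The plan divides into (a) showing that $X=M^I$ is a closed interior p-submanifold, and (b) producing the equivariant resolution structure on $[M;X]$ under the transversality hypothesis. For (a), the p-submanifold assertion is Proposition~\ref{Eqcores.229}, and closedness follows from minimality: if $\zeta_n\to\zeta$ with $\zeta_n\in M^I,$ then the Tube Theorem~\ref{TubeThm} at $\zeta$ shows that, for large $n,$ $G_{\zeta_n}$ is $G$-conjugate to a subgroup of $G_\zeta,$ so the conjugacy class of $G_\zeta$ satisfies $I\preccurlyeq [G_\zeta];$ minimality forces $[G_\zeta]=I$ and $\zeta\in M^I.$

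The main content of (a) is the interior property, which I anticipate to be the most delicate step. Suppose, for contradiction, that a component $C$ of $M^I$ is contained in $\pa M;$ it must then lie in some hypersurface $H\in\cM_1(M).$ Fix $\zeta\in C$ and invoke Proposition~\ref{Eqcores.109} to obtain a $G$-invariant boundary product structure; in particular the inward normal vector field $V_H$ is $G$-invariant, so $G_\zeta$ fixes the inward normal direction to $H$ at $\zeta.$ By the Tube Theorem a neighborhood of $\zeta$ is modeled on $G\times_{G_\zeta}V^+$ with $V^+=\bbR^{\ell,k}$ and, by the boundary-invariance just noted, the summand of $V$ normal to $H$ lies in the fixed subspace $V^{G_\zeta}.$ Points whose representative in $V^+$ lies in $V^+\cap V^{G_\zeta}$ have isotropy conjugate to $G_\zeta;$ taking such a point with strictly positive normal coordinate produces points of $M^I$ near $\zeta$ that are not in $H,$ and by local connectedness these belong to $C,$ contradicting $C\subseteq H.$ The key leverage is the $G$-invariance of the collar from Proposition~\ref{Eqcores.109}, without which the normal direction need not lie in $V^{G_\zeta}.$

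For (b), Proposition~\ref{EquivBlowup} lifts the $G$-action to $[M;X]$ with the blow-down map equivariant and $\cJ([M;X])=\cJ(M\setminus X)=\cJ(M)\setminus\{I\}$ by disjointness of isotropy types. By (a) together with the transversality hypothesis, $X$ is transverse to the resolution structure in the sense preceding Proposition~\ref{Eqcores.126}, which therefore endows $[M;X]$ with a resolution structure: lifts of old hypersurfaces carry pull-back fibrations, while the front face $\ff=\beta^{-1}(X)$ fibers over $X$ as the inward-pointing normal sphere bundle. The lifted fibrations are $G$-equivariant because both the original fibrations and the blow-down are; the front-face fibration is equivariant because $X$ is $G$-invariant. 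The lifted bases retain their unique isotropy types, and the new base $X$ has unique isotropy type $I.$ Finally, any isotropy type appearing in an old base is absent from $\cJ(M)$ by hypothesis, hence absent from $\cJ([M;X])\subset\cJ(M),$ while $I$ is absent from $\cJ([M;X])$ precisely because all points of isotropy type $I$ were removed. These verifications yield the required equivariant resolution structure on $[M;X].$
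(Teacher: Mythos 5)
Your proposal is correct and follows essentially the same route as the paper: closedness of $X$ from minimality of the isotropy type, the p-submanifold property from Proposition~\ref{Eqcores.229}, the lift of the $G$-action from Proposition~\ref{EquivBlowup}, and the lifted resolution structure from Proposition~\ref{Eqcores.126} under the transversality hypothesis, followed by the equivariance and isotropy checks. The only difference is that where the paper disposes of the \emph{interior} property by citing Proposition~\ref{Eqcores.229} (ultimately Corollary~\ref{FixedPtSet}), you give a direct argument via the $G$-invariant inward normal being fixed by $G_\zeta$ — this is sound, though the phrase ``it must then lie in some hypersurface $H$'' should strictly be localized (a component in $\pa M$ need only lie in some hypersurface near each of its points, which is all your local argument uses).
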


\begin{proof} As for a boundaryless manifold the minimal isotropy type is 
  closed in $M$ since its closure can only contain points with larger
  isotropy group. It is an interior p-submanifold by
  Proposition~\ref{Eqcores.229}, thus the blow up $[M;X]$ is
  well-defined. The $G$-action lifts smoothly to $[M;X]$ by
  Proposition~\ref{EquivBlowup} and the defining isotropy type $I$ is not
  present in the resolved action. The assumed transversality allows
  Proposition~\ref{Eqcores.126} i) to be applied to conclude that the
  resolution structure lifts to $[M;X]$ and so gives an equivariant
  resolution structure.
\end{proof}

\begin{theorem}\label{thm:ResStructure} A compact manifold (with corners),
  $M,$ with a smooth, boundary intersection free, action by a compact Lie
  group, $G,$ has a canonical full resolution, $Y(M),$ obtained by
  iterative blow-up of minimal isotropy types.
\end{theorem}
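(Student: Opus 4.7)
The plan is induction on $|\cI(M)|$, which is finite by Proposition~\ref{Eqcores.229}. The base case $|\cI|=1$ is immediate: $M$ already has unique isotropy, so $Y(M)=M$ equipped with the trivial equivariant resolution structure (every boundary hypersurface fibered over itself by the identity, codimension $0$) is the canonical full resolution and no blow-ups are needed.

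For the inductive step, select a minimal $I\in\cI$ with respect to $\preccurlyeq$. The stratum $M^I$ is $G$-invariant since $G_{g\zeta}=gG_\zeta g^{-1}$, it is a p-submanifold with finitely many components by Proposition~\ref{Eqcores.229}, and it is closed because, by minimality, $\overline{M^I}=M_I\subset\bigcup_{I'\preccurlyeq I}M^{I'}=M^I$. I then verify the two hypotheses of Proposition~\ref{Eqcores.228}. For the interior-p-submanifold condition, Bochner's theorem (Proposition~\ref{Bochner}) linearizes the $G_\zeta$-action locally on $\bbR^{m,k}$; the boundary-intersection-free hypothesis forces $G_\zeta$ to preserve each coordinate half-line $\{x_i\geq 0\}$ individually, and compactness of $G_\zeta$ combined with preservation of the positive ray forces the action on each $\pa/\pa x_i$ to be trivial. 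Hence the $G_\zeta$-fixed subspace, which is the local model of $M^I,$ contains every $x_i$-direction and extends into the interior of $M$, showing that no component of $M^I$ lies in $\pa M$. For the transversality condition, Definition~\ref{Eqcores.227} guarantees that each base $Y_H$ of the inductively built equivariant resolution structure has a unique isotropy type $I_H$ which is not present in the total space, so in particular $I_H\neq I$; since each $\phi_H$ arises inductively as the spherical normal fibration of an earlier blow-up of a stratum $X_H$ of type $I_H$, the intersection $M^I\cap H$ is a $G$-equivariant piece of the spherical normal bundle over $X_H$, and $G$-equivariance combined with the slice model shows that this piece projects submersively onto $X_H$, yielding the required transversality.

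With both conditions in hand, Proposition~\ref{Eqcores.228} produces $[M;M^I]$ equipped with a smoothly lifted $G$-action (Proposition~\ref{EquivBlowup}) and an induced equivariant resolution structure, the new front face fibering over $M^I$ with the spherical normal bundle as fibers. By \eqref{Eqcores.223}, $\cJ([M;M^I])=\cJ(M\setminus M^I)$, so the type $I$ is removed without introducing any new types and $|\cI|$ strictly decreases. The inductive hypothesis applied to $[M;M^I]$ then yields a canonical full resolution, which composed with $\beta:[M;M^I]\to M$ gives the desired $Y(M)$. Canonicity follows because distinct minimal types give disjoint strata (distinct conjugacy classes), so all minimal strata at each stage may be blown up simultaneously in any order, producing a construction independent of any intermediate choice.

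The main anticipated obstacle is the transversality verification. It is intuitively clear from the $G$-equivariant subbundle structure of the isotropy strata inside spherical normal bundles, but making it rigorous requires propagating the slice-theoretic local model through the iterated construction, and in particular showing that when a new minimal stratum meets a front face created by an earlier blow-up, its intersection with that face projects submersively onto the base of the fibration. One must also control the interaction of different front faces created in succession by checking inductively that the compatibility diagrams of Definition~\ref{Eqcores.121} are preserved. Once this transversality is established, the rest of the argument is bookkeeping via Propositions~\ref{Eqcores.228} and~\ref{EquivBlowup}.
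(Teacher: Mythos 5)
Your proposal is correct and follows essentially the same route as the paper: induct, blow up minimal isotropy types (closed, interior p-submanifolds), and reduce everything to the transversality of a new minimal stratum to the fibers of earlier front faces, which you, like the paper, resolve via the slice/tube-theorem local model $G\times_{H}W^{+}\times B$ exhibiting neighboring isotropy types as equivariant sub-bundles of the normal sphere bundle. The only point the paper makes more explicit is the reduction, at a corner, to the single ``most recent'' blow-up using the nesting of fibers forced by the compatibility diagrams \eqref{Eqcores.122}, which is exactly the step you flag as needing care.
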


\begin{proof} In view of Proposition~\ref{Eqcores.228} it only remains to
  show, iteratively, that at each stage of the resolution any minimal
  isotropy type is transversal to the fibers of the earlier blow ups.

At the first step the transversality condition is trivial, since there is
no boundary, and so the first blow-up can be carried out and leads to an
equivariant resolution structure. Thus we can assume, inductively, that the
equivariant resolution structure exists at some level and then we simply need
to check that any minimal isotropy type for the lifted action is
transversal to the fibers of each of the fibrations. Transversality is a
local condition and at a point of boundary codimension greater than one the
compatibility condition for a resolution structure ensures that
the fibration of one of the boundary hypersurfaces through that point has
smallest leaves and it is necessarily the `most recent' blow up. Thus we
need only consider the case of a point of intersection of the minimal
isotropy type and the front face produced by the blow up of an earlier
minimal isotropy type in which there are (locally) no intermediate blow
ups. Working locally, in the manifold before the earlier of the two blow
ups, we simply have a manifold with a $G$-action and two intersecting
isotropy types, one of which is locally minimal.

Now, by Proposition \ref{TubeThm}, if $\zeta$ is such a point of
intersection, with isotropy group $H,$ it has a neighborhood, $U,$ with a
$G$-equivariant diffeomorphism to $L= G \times_{H} V^+$ with $V^+$ the
inward-pointing unit ball in a representation space $V$ for $H.$ The points
in $V$ with isotropy group $H$ form a linear subspace and $H$ acts on the
quotient. Thus the action is locally equivariantly diffeomorphic to
$G\times_H W^+\times B$ where the action is trivial on $B$ and $W^+\subset
W$ is a ball around the origin in a vector space $W$ with linear $H$-action
such that $W^H=\{0\}.$ Thus any isotropy type meeting $M^H$ at $\zeta$ is
represented as a twisted product by $G\times_{H} (V^+)^{I}\times B$ where $I$
is an isotropy class in $H.$ In particular such a neighboring isotropy type
is a bundle over the minimal isotropy type and meets the fibers of a normal
sphere bundle of small radius transversally. Thus, on blow-up it meets the
fibers of the front face, which are these spheres, transversally.

Thus in fact the successive blow-ups are always transversal to the fibers
of the early ones and hence the successive partial resolution structures
lift and finally give a full resolution.

The uniqueness of this full resolution follows from the fact that at each
stage the alternative is to blow up one of a finite set of
minimal isotropy types. Since these are disjoint the order at this stage
does not matter and hence, inductively, any such order produces a
canonically diffeomorphic full resolution.
\end{proof}

\begin{remark} \label{resolution.2}
As mentioned in the introduction, there is a one-to-one correspondence 
between the isotropy types of the $G$-action on $M$ and the $G$-invariant 
collective boundary hypersurfaces in the resolution structure of $Y(M).$
The base of the boundary fibration corresponding to the isotropy type 
$M^{[K]}$ is the canonical resolution of $M_{[K]},$ the closure of $M^{[K]}$ 
in $M,$ i.e., $Y_{[K]}(M) = Y(M_{[K]}).$ 
If $M$ is connected, the isotropy type of $Y(M)$ is the unique open, or principal,
isotropy type of $M,$ so $M$ and $Y(M)$ can be thought of as different
compactifications of the same open set. 
\end{remark}

Consider the action of $\bbS^1$ on $\bbS^2$ by rotation around the $z$-axis.
There are two isotropy types: one consisting of the `north pole' and `south 
pole', $\{ N, S \},$ has isotropy group $\bbS^1,$ while the complement has 
isotropy group $\{\Id\}.$ The resolution is obtained by blowing-up 
the former isotropy type and keeping the blow-down maps as the boundary 
fibration,
\begin{equation*}
	Y(\bbS^2) = [\bbS^2; \{ N, S \} ].
\end{equation*}
\begin{figure}[htpb]
\centering
\includegraphics{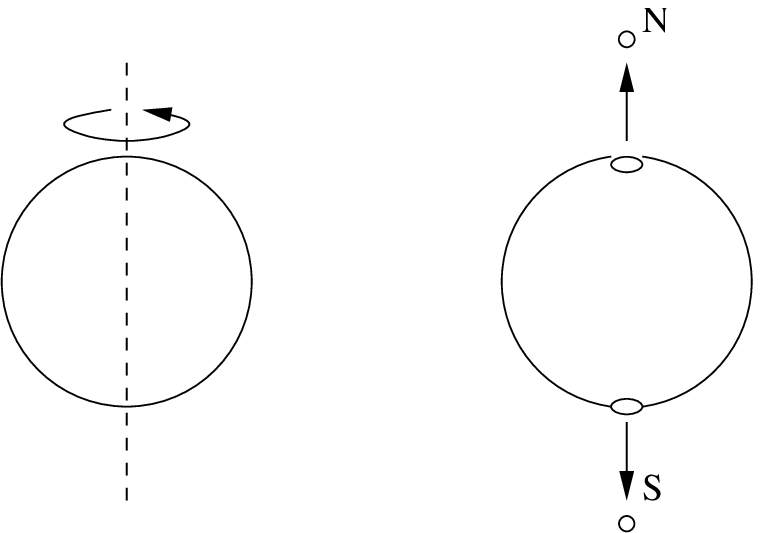}
\caption{The resolution of the $\bbS^1$ action on $\bbS^2.$}
\end{figure}

In non-trivial cases, the resolution of a product of $G$-actions is not
equal to the product of the resolutions. For instance, consider the
$\bbZ_2$-action on $[-1,1]$ given by reflecting across the origin and the
product action of $\bbZ_2 \times \bbZ_2$ on $[-1,1] \times [-1,1].$ The
resolution of $[-1,1]$ is
\begin{equation*}
	Y([-1,1]) = \big[ [-1,1]; \{ 0\} \big] = [-1,0] \sqcup [0,1]
\end{equation*}
while the resolution of $\bbR^2$ is
\begin{equation*}
	Y(\bbR^2) = \big[ [-1,1]^2; \{ (0,0) \}; [-1,1] \times \{ 0 \}; \{0 \} \times [-1,1] \big]
\end{equation*}
which in particular is not equal to $Y([-1,1])^2.$ 
\begin{figure}[htpb]
\centering
\label{ProductFigure}
\includegraphics{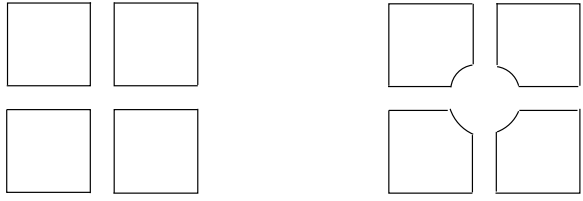}
\caption{With these actions, $Y([-1,1])^2$ is the disjoint union of four
  quadrants and $Y([-1,1]^2)$ is the disjoint union of four quadrants with a
  corner blown-up.} 
\end{figure}

%%%%%%%
\section{Resolution of orbit spaces}\label{Quot}
%%%%%%%

Having constructed a resolution of the group action, we now view the consequences
for the orbit space. For boundaryless manifolds with a unique isotropy type Borel 
showed that the orbit space is a smooth 
manifold, and the natural projection onto it is a smooth fibration, though in the 
non-free case not a principal bundle. It is straight-forward to extend this to manifolds 
with corners.

\begin{proposition}[Borel] \label{lem:Borel} Let $M$ be a manifold with a
  (boundary intersection free) $G$-action with a unique isotropy type, if
  $N(K)$ is the normalizer of an isotropy group $K$ then $M$ is
  $G$-equivariantly diffeomorphic to $G \times_{N(K)} M^K$ and the
  inclusion $M^K \hookrightarrow M$ induces a diffeomorphism
\begin{equation*}
(N(K)/K)\backslash M^K = N(K)\backslash M^K \cong G\backslash M.
\end{equation*}
\end{proposition}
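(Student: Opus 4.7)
The plan is to reduce the corners version to the boundaryless case by local linearization via the Tube Theorem (Proposition~\ref{TubeThm}), using that with a unique isotropy type the local models are particularly simple.

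First I would establish that $M^{K}$ is a closed interior p-submanifold. By Corollary~\ref{FixedPtSet} applied to the action of the compact subgroup $K$, the fixed set $M^{K}$ (which for a unique isotropy type coincides with the set of points having isotropy exactly $K$) is an interior p-submanifold of $M$; that it is closed follows because the isotropy type is unique. The normalizer $N(K)$ acts on $M^{K}$ since for $n\in N(K)$ and $\zeta\in M^{K}$ one has $G_{n\zeta}=nG_{\zeta}n^{-1}=nKn^{-1}=K$; moreover $K$ acts trivially on $M^{K}$ by definition, and the induced action of $N(K)/K$ is free because if $n\zeta=\zeta$ then $n\in G_{\zeta}=K$.

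Next I would analyze the natural map
\begin{equation*}
\Psi:G\times_{N(K)}M^{K}\longrightarrow M,\qquad [g,\zeta]\longmapsto g\cdot\zeta,
\end{equation*}
which is well-defined because $K$ acts trivially on $M^{K}$ and $N(K)$ preserves $M^{K}$, and is manifestly $G$-equivariant. Surjectivity is immediate: any $\zeta'\in M$ has $G_{\zeta'}=gKg^{-1}$ for some $g\in G$, and then $g^{-1}\zeta'\in M^{K}$. Injectivity follows from the fact that $g\zeta=g'\zeta'$ with $\zeta,\zeta'\in M^{K}$ forces $(g'^{-1}g)K(g'^{-1}g)^{-1}=K$, hence $g'^{-1}g\in N(K)$, which is precisely the equivalence relation defining the twisted product.

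The main obstacle is verifying that $\Psi$ is a diffeomorphism of manifolds with corners; I would argue this locally using the Tube Theorem. Near any $\zeta\in M^{K}$ the theorem gives a $G$-equivariant identification of a neighborhood of the orbit $G\zeta$ with $G\times_{K}V^{+}$ for a $K$-representation space $V$. Since all isotropy groups are conjugate to $K$ and isotropy groups at points of $V^{+}$ are subgroups of $K$, while conjugate compact subgroups of the same dimension and component count must coincide, every such subgroup equals $K$; thus $K$ acts trivially on $V^{+}$, and locally the model is $(G/K)\times V^{+}$. Within this chart, $M^{K}$ corresponds to $(N(K)/K)\times V^{+}$, and the map $\Psi$ becomes $[g,([n],v)]\mapsto[gn,v]$, which is a diffeomorphism of the local models. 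Piecing these charts together gives that $\Psi$ is a global $G$-equivariant diffeomorphism.

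Finally, for the quotient statement, quotienting $G\times_{N(K)}M^{K}$ on the left by $G$ leaves exactly $N(K)\backslash M^{K}$. Since $K$ acts trivially on $M^{K}$, this equals $(N(K)/K)\backslash M^{K}$, and because the $N(K)/K$-action on $M^{K}$ is free and proper (with $N(K)/K$ compact), the quotient inherits the structure of a smooth manifold with corners. Composing with $\Psi^{-1}$ gives the desired identification $G\backslash M\cong(N(K)/K)\backslash M^{K}$ induced by the inclusion $M^{K}\hookrightarrow M$.
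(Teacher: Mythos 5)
Your proof is correct and follows essentially the same route as the paper's: both construct the natural $G$-equivariant map $G\times_{N(K)}M^K\longrightarrow M$ and establish that it is a diffeomorphism by local linearization, the paper citing the Slice Theorem for smoothness of the inverse while you unwind the equivalent Tube Theorem model $(G/K)\times V^+$ explicitly. The only (harmless) omission is that you do not spell out why $G\times_{N(K)}M^K$ is itself a smooth manifold with corners, which the paper does by noting that the diagonal $N(K)$-action on $G\times M^K$ is free.
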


\begin{proof} We follow the proof of \cite[Theorem 2.6.7]{Duistermaat-Kolk}
in the boundaryless case. It is shown in Corollary~\ref{FixedPtSet} that
for a fixed isotropy group $M^K$ is a smooth interior
p-submanifold. The normalizer $N(K)$ acts on $M^K$ with isotropy group $K$ 
so the quotient group $W(K)=N(K)/K$ acts freely on $M^K.$ Thus the quotient
$W(K)\backslash M^K$ is smooth. The diagonal action of $N(K)$ on the product 
\begin{equation}
N(K)\times(G\times M^K)\ni (n,(g,m))\longmapsto (gn^{-1},nm) \in G \times M^K
\label{Eqcores.208}\end{equation}
is free, so the quotient $G\times_{N(K)}M^K$ is also smooth. Moreover the
action of $G$ on $M$ factors through the quotient, $gm=gn^{-1}\cdot nm,$ so
defines the desired smooth map
\begin{equation}
G\times_{N(K)}M^K\longrightarrow M.
\label{Eqcores.209}\end{equation}
This is clearly $G$-equivariant for the left action of $G$ on
$G\times_{N(K)}M^K$ and is the identity on the image of $\{\Id\}\times M^K$
to $M^K.$ The Slice Theorem shows that the inverse map, $m\longmapsto
[(g,m')]$ if $m'\in M^K$ and $gm'=m$ is also smooth, so \eqref{Eqcores.209}
is a $G$-equivariant diffeomorphism.

The quotient $G \backslash(G\times_{N(K)}M^K)=N(K)\backslash M^K$
is smooth and the smooth structure induced on $G \backslash M$ is
independent of the choice of $K.$
\end{proof}

Thus if $Y(M)$ is the canonical resolution of the $G$-action on $M,$ the orbit
space
\begin{equation*}
	Z(M) = G \backslash Y(M)
\end{equation*}
is a smooth manifold with corners, as is the orbit space, $Z_I,$ of each $Y_I.$
Moreover, the boundary hypersurfaces of $Z(M)$ may be identified with the 
equivalence classes under the action of $G$ of the boundary hypersurfaces of 
$Y(M)$ and the boundary fibrations of $Y(M),$ being $G$-equivariant, descend 
to give a resolution structure on $Z(M).$

%%%%%%%
\section{Equivariant maps and resolution} \label{sec:Maps}
%%%%%%%

Given two manifolds with $G$-actions and an equivariant map between them, there
need not be a corresponding map between their canonical resolutions. Any map can
be factored into the composition of an embedding followed by a fibration, and in this 
section we describe the relation between these maps and resolution. In particular we
will discuss the resolution of a space with respect to an equivariant fibration.

The behavior of resolution with respect to embeddings is particularly simple.

\begin{theorem}
Let $X$ and $M$ be manifolds with $G$-actions, and let $i: X \hookrightarrow M$ 
be an equivariant embedding of $X$ as a p-submanifold of $M.$ Let $[K]$ be the 
open isotropy type of $X,$ so that $i(X) \subseteq M_{[K]},$ and let
\begin{equation*}
	Y_{[K]}(M) \xrightarrow{\text{ }\beta_{[K]}\text{ }} M_{[K]}
\end{equation*}
be the resolution of $M_{[K]}.$ Then 
\begin{equation*}
	Y(X) = \overline{ \beta_{[K]}^{-1}( X_{[K]} ) },
\end{equation*}
where the closure is taken in $Y_{[K]}(M),$ and so we have a commutative diagram
\begin{equation*}
\xymatrix{Y(X) \ar@{^(->}[r] \ar[d]^{\beta_X} & Y_{[K]}(M) \ar[d]^{\beta_{[K]}} \\ 
		X \ar@{^(->}[r]^{i} &  Y }
\end{equation*}
\end{theorem}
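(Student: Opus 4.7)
The plan is to follow the iterative blow-up construction of $Y_{[K]}(M)$ from Theorem~\ref{thm:ResStructure} and track the strict transform of $X$ at every step, showing that it realizes the canonical resolution $Y(X)$. Factor $\beta_{[K]}$ as a composition
\begin{equation*}
Y_{[K]}(M) = M_N \xrightarrow{\beta_N} M_{N-1} \to \dots \to M_1 \xrightarrow{\beta_1} M_0 = M_{[K]},
\end{equation*}
with $M_{j+1} = [M_j; Z_j]$ and $Z_j \subseteq M_j$ a minimal isotropy type. Set $X_0 = X$ and define inductively the strict transform $X_{j+1} = \overline{\beta_{j+1}^{-1}(X_j \setminus Z_j)} \subseteq M_{j+1}.$ The central inductive claim is: for every $j,$ (a) $X_j$ is a closed $G$-invariant p-submanifold of $M_j$ whose open isotropy type is $[K];$ (b) $\cI(X_j) \subseteq \cI(M_j);$ and (c) whenever $X_j \cap Z_j$ is non-empty, this intersection coincides with a minimal isotropy type of $X_j$ and is transverse, in the sense of \S\ref{Ifs}, to every boundary fibration of $X_j$ inherited from $M_j.$

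The minimality assertion in (c) is automatic: since the class $[K'_j]$ labelling $Z_j = M_j^{[K'_j]}$ is minimal in $\cI(M_j),$ the group $K'_j$ is conjugation-maximal among isotropy groups in $M_j,$ and \emph{a fortiori} in the subcollection $\cI(X_j) \subseteq \cI(M_j).$ For the p-submanifold and transversality parts of (c), I will work locally at a point $\zeta \in X_j \cap Z_j$ via the Tube Theorem (Proposition~\ref{TubeThm}), which produces a $G$-equivariant model
\begin{equation*}
G \times_{K'_j}(V^+ \times B)
\end{equation*}
for a neighborhood of $\zeta$ in $M_j,$ with $V$ a representation of $K'_j$ satisfying $V^{K'_j} = \{0\}$ and $B$ a neighborhood in $Z_j.$ By equivariance, $X_j$ is correspondingly modeled on $G \times_{K'_j}(W^+ \times B_X)$ for a $K'_j$-subrepresentation $W \subseteq V$ and a p-submanifold $B_X \subseteq B.$ Blowing up $\{0\} \times B$ in the model then exhibits $X_{j+1}$ locally as $G \times_{K'_j}[W^+ \times B_X; \{0\} \times B_X],$ plainly a p-submanifold of $M_{j+1}$ that is transverse to the new front-face fibration; transversality to the older fibrations is preserved through blow-up by Proposition~\ref{Eqcores.126}.

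Granted the inductive claim, the sequence $X_0 \leftarrow \dots \leftarrow X_N$ consists of successive blow-ups of minimal isotropy types of $X,$ interspersed with do-nothing steps corresponding to isotropy types of $M_{[K]}$ absent from $X.$ Since every isotropy type of $X$ belongs to $\cI(M_{[K]})$ each is blown up at some stage, so $X_N$ has unique isotropy type $[K].$ The uniqueness clause of Theorem~\ref{thm:ResStructure} then identifies $X_N$ canonically and $G$-equivariantly with $Y(X),$ the restriction of $\beta_{[K]}$ becoming $\beta_X$ and yielding the commutative diagram. By construction $X_N$ is the iterated strict transform of $X$ in $Y_{[K]}(M),$ which is the closure $\overline{\beta_{[K]}^{-1}(X_{[K]})}.$ The main obstacle is the p-submanifold and transversality step in (c); once the Tube Theorem normal form is in hand the remainder of the argument reduces to compatibility bookkeeping.
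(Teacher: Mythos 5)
Your proposal is correct and follows essentially the same route as the paper: the paper's (much terser) proof likewise rests on the observation that the strict transform of a p-submanifold $X$ under blow-up of a center $S$ with a common local product description is $[X;X\cap S]$, together with the identity $X_{[L]}=M_{[L]}\cap X$, so that the iterated blow-up resolving $M_{[K]}$ simultaneously performs the canonical resolution of $X$. Your inductive bookkeeping (minimality of $Z_j\cap X_j$ in $\cI(X_j)\subseteq\cI(M_j)$, the Tube Theorem normal form, and transversality to the accumulated boundary fibrations) just makes explicit what the paper leaves implicit.
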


\begin{proof}
It suffices to note that, if $S,$ $X\subset M$ are closed p-submanifolds
with a common local product description, so $S\cap X\subset X$ is a
p-submanifold and if $\gamma:[M;S]\longrightarrow M$ is the blow-down map, then
\begin{equation*}
\overline{ \gamma^{-1}( X\setminus S ) } = [X; X \cap S].
\end{equation*}
Thus, since $X_{[L]} = M_{[L]} \cap X$ for every subgroup $L$ of $G,$ resolving
$M_{[K]}$ simultaneously resolves $X = X^{[K]}.$ 
\end{proof}

We next consider the resolution of the total space of a fibration, first
without group actions.

Suppose that $X$ and $M$ carry resolution structures and $f:X
\longrightarrow M$ is a fibration with the property that, for each $H \in
\cM_1(X)$ such that $f(H) \in \cM_1(X),$ $f$ maps the fibers of $\phi_H$
to the fibers of $\phi_{f(H)}.$ Thus $f$ induces a fibration $\bar f_H: Y_H
\longrightarrow Y_{f(H)}$ covered by $f\rest{H}$. In this case we say that
$f$ is a {\em resolution fibration}. Note that for $H\in\cM_1(X),$ $f(H)$
is either a boundary hypersurface of $M$ or a component of $M.$

\begin{lemma} \label{BlowUpLemmaItFib} Suppose $X$ and $M$ carry resolution
  structures and $f:X \longrightarrow M$ is a resolution fibration.
\begin{itemize}
\item [i)] If $S \subseteq X$ is a closed p-submanifold transverse to the
  fibers of $f$ and to the  fibers of $\phi_H$ for each $H \in \cM_1(X),$
  then the composition of $f$ with the blow-down map $[X;S] \longrightarrow
  X \longrightarrow M$ is a resolution fibration.
\item [ii)] If  $T \subseteq M$ is a closed interior p-submanifold
  transverse to the fibers of each $\phi_K$ for $K \in \cM_1(M),$ then $f$
  lifts from $X \setminus f^{-1}(T)$ to a resolution fibration $[X;
    f^{-1}(T)] \longrightarrow [M;T].$
\item [iii)] If $L \subseteq Y_H$ is an interior p-submanifold (with $\dim
  L < \dim Y_H$) for some $H \in \cM_1(X)$ and $\phi_H^{-1}(L)$ is
  transverse to the fibers of $f$ then the composition, $[X;\phi_H^{-1}(L)]
  \longrightarrow X \longrightarrow M,$ of $f$ with the blow-down map is a
  resolution fibration.
\end{itemize}
\end{lemma}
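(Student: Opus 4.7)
The proof of all three parts follows a common pattern: use Lemma \ref{BlowUpLemmaFib} to verify that the relevant composed or lifted map is a fibration, use Proposition \ref{Eqcores.126} to lift the resolution structure to the blown-up total space, and then check that on each boundary hypersurface of the new total space the resolution-fibration condition holds. The new boundary hypersurfaces of $[X;S]$, $[X;f^{-1}(T)]$, and $[X;\phi_H^{-1}(L)]$ fall into two classes: lifts of the original boundary hypersurfaces of $X$, whose boundary fibrations are obtained by pull-back under the blow-down, and the front face of the blow up, whose boundary fibration is either the blow-down restricted to the front face (as in case i) of Proposition \ref{Eqcores.126}) or a lift of a fibration into some $Y_H$ (as in case ii)). On the lifted hypersurfaces the resolution-fibration property is inherited from $f$ by functoriality, so the substantive check is only on the front face.

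For i), transversality of $S$ to the fibers of $f$ gives, via Lemma \ref{BlowUpLemmaFib} i), that $f\circ\beta_S:[X;S]\longrightarrow M$ is a fibration, and transversality of $S$ to the fibers of each $\phi_H$ gives, via Proposition \ref{Eqcores.126} i), a resolution structure on $[X;S]$. The new front face fibration $\beta_S\big|_{\ff}:\ff([X;S])\longrightarrow S$ has inward-pointing normal sphere fibers over $S$, and under $f\circ\beta_S$ each such fiber is collapsed to a point of $f(S)\subseteq M$ which lies either in an open component of $M$ or in a single fiber of some $\phi_K$; this is precisely the condition required. Part iii) is handled identically, using Proposition \ref{Eqcores.126} ii) to obtain the resolution structure on $[X;\phi_H^{-1}(L)]$ and exploiting the assumed transversality of $\phi_H^{-1}(L)$ to the fibers of $f$ to invoke Lemma \ref{BlowUpLemmaFib} i).

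For ii), $f^{-1}(T)$ is a closed interior p-submanifold of $X$ by local triviality of $f$. Lemma \ref{BlowUpLemmaFib} ii) lifts $f$ to a fibration $\wt f:[X;f^{-1}(T)]\longrightarrow[M;T]$. The transversality of $T$ to each $\phi_K$, combined with the resolution-fibration property of $f$, forces $f^{-1}(T)$ to be transverse to each $\phi_H$, so Proposition \ref{Eqcores.126} i) yields a resolution structure on $[X;f^{-1}(T)]$. Using the identification of $\wt f$ as the pull-back fibration described in Remark \ref{resolution.1}, the front face of $[X;f^{-1}(T)]$ fibers over the front face of $[M;T]$ with fibers equal to the fibers of $f$, so the resolution-fibration compatibility on the front face is immediate.

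The principal obstacle is the verification in ii) that transversality of $T$ to every $\phi_K$, $K\in\cM_1(M)$, propagates through $f$ to transversality of $f^{-1}(T)$ to every $\phi_H$, $H\in\cM_1(X)$. When $f(H)\in\cM_1(M)$ the resolution-fibration property means that the fibers of $\phi_H$ are sent by $f$ into the fibers of $\phi_{f(H)}$, so transversality to the latter pulls back to transversality to the former; when $f(H)$ is a component of $M$, $f^{-1}(T)\cap H$ is a pull-back along a fibration of the transverse intersection of $T$ with that component, which again yields the required transversality. The remaining bookkeeping of matching each new boundary hypersurface of the blown-up total space with the correct boundary hypersurface of the blown-up base is then routine.
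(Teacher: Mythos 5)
Your proposal is correct and follows essentially the same route as the paper: invoke Lemma~\ref{BlowUpLemmaFib} for the fibration property, Proposition~\ref{Eqcores.126} for the lifted resolution structure, and then verify the fibers-to-fibers condition hypersurface by hypersurface, with the transversality of $f^{-1}(T)$ to the fibers of the $\phi_H$ in part ii) deduced from the resolution-fibration property of $f$ exactly as the paper asserts. The only difference is one of emphasis: the paper locates the substantive check on the \emph{lifted} hypersurfaces, where the fiber $Z$ of $\phi_H$ is replaced by $[Z;Z\cap S]$ and blows down onto a fiber of $\phi_H$, whereas you dispose of those by ``functoriality'' and dwell on the front face, whose condition is in fact vacuous since transversality of $S$ to the fibers of $f$ forces $f(S)$ to be a union of components of $M$ rather than a boundary hypersurface.
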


\begin{proof} For i), it only remains to establish that $[X;S]
  \longrightarrow X \longrightarrow M$ is a resolution fibration. If $H \in
  \cM_1(X)$ intersects $S$ then $\phi_H$ restricts to $S \cap H$ to a
  fibration $\xymatrix@1{Z_S\ar@{-}[r]&S \cap H\ar[r]& Y_H}$ and on passing
  from $X$ to $[X;S]$ the boundary fibration $\xymatrix@1{Z\ar@{-}[r]&
    H\ar[r]& Y_H}$ is replaced with $\xymatrix@1{[Z;Z_S]\ar@{-}[r]& [H; H
      \cap S] \ar[r]& Y}.$  Thus the blow-down map sends the fibers of 
  each boundary fibration of $[X;S]$ to a fiber of a boundary
  fibration of $X$, and so the composition with $f$ is a resolution fibration.

For ii), first note that $f^{-1}(T)$ is transversal to the fibers of each
$\phi_H$ for $H \in \cM_1(X)$ because $f$ is a resolution fibration, and
hence $[X; f^{-1}(T)]$ has a resolution structure. That the
lift of $f$ is a resolution fibration follows as in i).

Finally, for iii), since $\phi_H^{-1}(Z)$ is transverse to the fibers of
$f,$ the composition $[X;\phi_H^{-1}(Z)] \xrightarrow{\beta} X
\xrightarrow{f} M$ is a fibration by Lemma \ref{BlowUpLemmaFib} i) and a
resolution fibration by the same argument as in i).
\end{proof}

Even an equivariant fibration between two manifolds with smooth actions by
the same group does not in general lift to a smooth map between their
canonical resolutions. However there is a natural resolution of the total
space relative to the fibration to which it lifts to fibration to the
canonical resolution of the base.

\begin{theorem} If $f:X\longrightarrow M$ is an equivariant fibration
  between compact manifolds with smooth actions by a compact Lie group $G,$
then there is a natural full resolution of the action on $X,$ denoted $Y(X,f),$
such that $f$ lifts to a fibration giving a commutative diagram
\begin{equation*}
\xymatrix{ Y(X,f) \ar[r]^{Y(f)} \ar[d]  & Y(M) \ar[d] \\ X \ar[r]^{f} & M.}
\end{equation*}
\end{theorem}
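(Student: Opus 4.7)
The plan is to construct $Y(X,f)$ in two successive stages: first resolve
the base $M,$ lifting each blow-up to $X$ by taking preimages under $f,$
and then resolve the residual fiberwise isotropy in $X.$

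In Stage~1, I would run the iterative construction of $Y(M)$ from
Theorem~\ref{thm:ResStructure}. At each step a minimal isotropy type
$M^I$ of the current base is a closed $G$-invariant interior
p-submanifold by Proposition~\ref{Eqcores.228}, and since $f$ is an
equivariant fibration, $f^{-1}(M^I)$ is likewise a closed $G$-invariant
p-submanifold of the current total space. I would blow up $M^I$ below
and $f^{-1}(M^I)$ above. By Proposition~\ref{EquivBlowup} the $G$-action
lifts smoothly, and by Lemma~\ref{BlowUpLemmaItFib}~ii) the map $f$ lifts
to a resolution fibration between the blow-ups. Iterating through the
full resolution of $M$ produces a $G$-equivariant resolution fibration
$\tilde f\colon X_1\longrightarrow Y(M).$

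In Stage~2, I would apply Theorem~\ref{thm:ResStructure} directly to
$X_1,$ iteratively blowing up minimal isotropy types $X_1^J.$ The
argument of that theorem (via Proposition~\ref{Eqcores.228}) shows
$X_1^J$ is transverse to the boundary fibrations already present; granted
in addition that $X_1^J$ is transverse to the fibers of $\tilde f,$
Lemma~\ref{BlowUpLemmaItFib}~i) lifts the resolution fibration to each
blow-up. After finitely many steps one reaches a full resolution
$Y(X,f)$ equipped with a smooth fibration
$Y(f)\colon Y(X,f)\longrightarrow Y(M).$ Naturality then follows as in
Theorem~\ref{thm:ResStructure} since within each stage the only choice
is an ordering of disjoint minimal isotropy types and such blow-ups
commute.

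The main obstacle is the transversality of $X_1^J$ to the fibers of
$\tilde f.$ By equivariance $\tilde f(X_1^J)$ is $G$-invariant in $Y(M),$
which has the unique isotropy class $[K],$ so $\tilde f(X_1^J)$ is
either empty or all of $Y(M).$ Applying the Tube Theorem
(Proposition~\ref{TubeThm}) at $\zeta\in X_1^J,$ $X_1$ is locally
$G\times_{G_\zeta}V^+$ while $Y(M)$ is locally $G\times_K W^+,$ and
uniqueness of the isotropy $[K]$ forces $K,$ and hence
$G_\zeta\subseteq K,$ to act trivially on $W.$ The local model of
$\tilde f$ is then a $G_\zeta$-invariant submersion
$\phi\colon V\to W,$ while $X_1^J$ is locally
$G\times_{G_\zeta}V^{G_\zeta,+}.$ Since $G_\zeta$ is compact and acts
trivially on $W,$ averaging over $G_\zeta$ shows that $d\phi$ annihilates
any $G_\zeta$-invariant complement to $V^{G_\zeta},$ so
$d\phi|_{V^{G_\zeta}}$ remains surjective. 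Hence $\tilde f|_{X_1^J}$ is
a submersion, which yields the required transversality and completes
the plan.
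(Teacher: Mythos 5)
Your Stage~1 matches the paper's construction: resolve $M$ canonically, blow up the $f$-preimage of each center upstairs, and use Lemma~\ref{BlowUpLemmaItFib}~ii) to keep $f$ an equivariant resolution fibration, arriving at a total space (your $X_1$, the paper's $X_0$) fibering over $Y(M)$. The gap is in Stage~2. The theorem asserts a \emph{full} resolution, which by Definition~\ref{Eqcores.227} requires the bases of all boundary fibrations to have unique isotropy type, and the paper explicitly warns that after Stage~1 ``the base spaces of its boundary fibrations need not have unique isotropy group.'' Your Stage~2 blows up only minimal isotropy types of the total space $X_1$ itself, and such interior blow-ups never change those bases: by Lemma~\ref{BlowUpLemmaFib}~i) the boundary fibration on the lift of a hypersurface $H$ is the pull-back $\beta^*\phi_H$ over the \emph{same} base $Y_H$. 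Concretely, take $M=\bbS^2$ with the rotation action, $X=\bbS^2\times F$ with the diagonal action and $f$ the projection, where $F$ is an $\bbS^1$-manifold with several nontrivial isotropy types; then $X_1$ already has unique (trivial) isotropy type, so your Stage~2 does nothing, yet the base $\{N,S\}\times F$ of the boundary fibration is unresolved. The paper's Stage~2 instead selects minimal isotropy types from among those occurring in $X_0$ \emph{or in the bases of its boundary fibrations}, and in the latter case blows up the preimage $\phi_H^{-1}(L)\subset H$ of the center $L$ in the base (case~ii) of ``transverse to the resolution structure''), invoking Proposition~\ref{Eqcores.126} and Lemma~\ref{BlowUpLemmaItFib}~iii) --- which is precisely why part~iii) of that lemma was proved. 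Your proposal never uses it, and without these boundary blow-ups the output satisfies Definition~\ref{Eqcores.226} at best, not Definition~\ref{Eqcores.227}.

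On the positive side, your transversality argument --- averaging over $G_\zeta$, which acts trivially on the slice $W$ of the resolved base, to conclude that $d\phi$ annihilates the nontrivial isotypic part of $V$ and hence that $\tilde f$ restricted to an isotropy type is still a submersion --- is a sound infinitesimal version of the paper's Lemma~\ref{SimpleObs} and does justify the interior blow-ups you perform. To complete the proof you would need to interleave (or follow with) the resolution of the bases $Q_{[K]}$ of the boundary fibrations, checking that the corresponding preimages $\phi_H^{-1}(L)$ are transverse to the fibers of $\tilde f$ so that Lemma~\ref{BlowUpLemmaItFib}~iii) applies, and then rerun your naturality argument for this enlarged collection of centers.
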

Thus relative resolution of the total space \emph{does} depend, in general, on the
fibration. It reduces to the canonical resolution when the fibration is
trivial in the sense that it is either the map to a point or the identity map.

\begin{proof} The first step in the construction of $Y(X,f)$ is to carry out the
canonical resolution of $M$ to $Y(M).$ We proceed by induction, assuming
both $X$ and $M$ carry resolution structures and that $f$ is an equivariant
resolution fibration, as discussed above. The inductive step is to blow up
a minimal isotropy type in $M.$ The inverse image under the fibration $f$
  is a closed p-submanifold of $X$ and Lemma~\ref{BlowUpLemmaItFib} ii) shows
  that after blow of this submanifold $f$ lifts to an equivariant resolution
  fibration between the resolution structures on the blow-ups of $X$ and $M.$
  Thus the construction proceeds to give the canonical resolution of $M$ and
  a total space $X_0,$ with resolution structure and an equivariant
  resolution fibration
\begin{equation*}
	f_0: X_0 \longrightarrow Y(M).
\end{equation*}
In general $X_0$ is not a partial resolution of $X$ since the base spaces
of its boundary fibrations need not have unique isotropy group.

The second part of the procedure is to resolve $X_0,$ with its resolution
structure, without further change to $Y(M)$ and its resolution structure.
As in Remark~\ref{resolution.1}, $X_0 \longrightarrow Y(M)$ can also be
identified with the pull-back of $X \longrightarrow M$ along $Y(M) \longrightarrow
M.$ The boundary hypersurfaces of $X_0$ are in one-to-one correspondence
with the isotropy groups of $M.$ Thus for each isotropy type $M_{[K]}$ of 
$M,$ there are boundary hypersurfaces $P_{[K]} \subseteq X_0$ and $H_{[K]}
\subseteq Y(M)$ with fibrations $P_{[K]} \longrightarrow Q_{[K]}$ and
$H_{[K]} \longrightarrow Y_{[K]}$ forming a diagram
\begin{equation*}
\xymatrix{ P_{[K]} \ar[r]^{f_0} \ar[d] & H_{[K]} \ar[d]
\\
Q_{[K]} \ar[r]^{\bar f_{[K]}} & Y_{[K]}}
\end{equation*}
where all arrows are equivariant fibrations.  Moreover we may identify
$Y_{[K]}$ with $Y(M_{[K]})$ as in Remark~\ref{resolution.2} and $\bar
f_{[K]}: Q_{[K]} \longrightarrow Y_{[K]}$ with the pull-back of $f:
f^{-1}(M_{[K]}) \longrightarrow M_{[K]}$ along the map $Y(M_{[K]})
\longrightarrow M_{[K]}.$

\begin{lemma} \label{SimpleObs} Suppose $X$ is a manifold with a smooth
  $G$-action, $Y$ a manifold with a resolved $G$-action and $h:X
  \longrightarrow Y$ an equivariant fibration then $h\rest{S}:S
  \longrightarrow Y$ is surjective for any isotropy type $S \subseteq X.$
\end{lemma}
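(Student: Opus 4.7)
My plan is to show that $h(S)$ is both open and closed in $Y$, and hence---by connectedness---equals $Y$. The engine is a strong equivariant local triviality of $h$ derived from the unique isotropy hypothesis on $Y$.

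First I would fix $y_0 \in Y$ and apply the Tube Theorem (Proposition~\ref{TubeThm}) to obtain a $G$-invariant neighborhood of the orbit $G \cdot y_0$ of the form $G \times_{G_{y_0}} W^+$ for some $G_{y_0}$-representation $W^+$. Because $Y$ has unique isotropy type, every $w \in W^+$ has $G$-stabilizer $G$-conjugate to $G_{y_0}$, while by \eqref{IsotropyShrink} it is also contained in $G_{y_0}$; a closed subgroup of a compact Lie group $G_{y_0}$ that is ambient-conjugate to $G_{y_0}$ itself must equal $G_{y_0}$ (same dimension, same number of components), so the $G_{y_0}$-action on $W^+$ is trivial. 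Thus the tube simplifies to $U \cong (G/G_{y_0}) \times W^+$, with $G$ acting only on the first factor.

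Next, the slice $\{[e]\} \times W^+$ has trivial $G_{y_0}$-action, so Proposition~\ref{Eqcores.F.1} applied to the $G_{y_0}$-equivariant fibration $h^{-1}(\{[e]\} \times W^+) \to W^+$ yields a $G_{y_0}$-equivariant identification with $F_{y_0} \times W^+$ (with trivial $G_{y_0}$-action on $W^+$), where $F_{y_0} = h^{-1}(y_0)$. Extending by the $G$-action produces a $G$-equivariant diffeomorphism
\[
h^{-1}(U) \cong (G \times_{G_{y_0}} F_{y_0}) \times W^+,
\]
in which $G$ acts trivially on $W^+$. Since the $W^+$-factor contributes no isotropy, any isotropy type $S \subseteq X$ meets $h^{-1}(U)$ in a product $S'' \times W^+$; when $S''$ is nonempty, its $G$-equivariant projection to the transitive $G$-space $G/G_{y_0}$ is surjective, and so $h(S) \cap U$ is either empty or all of $U$. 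This makes $h(S)$ both open and closed; since $S$ is nonempty by hypothesis, $h(S)$ is a nonempty union of connected components of $Y$, and $h(S) = Y$ follows once one knows $Y$ is connected (which is the case in the applications of this lemma).

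The main obstacle is producing the genuinely $G$-equivariant local trivialization in the second step. Without the unique-isotropy hypothesis, $G_{y_0}$ could act nontrivially on $W^+$, coupling the action on the slice with that on $F_{y_0}$ and obstructing a clean product decomposition. The unique-isotropy hypothesis is used precisely to force the $G_{y_0}$-action on $W^+$ to be trivial, which is what permits Proposition~\ref{Eqcores.F.1} to be applied directly on the slice and the resulting trivialization to propagate equivariantly over the whole tube.
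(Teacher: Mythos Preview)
Your argument is correct and takes a genuinely different route from the paper's. The paper argues globally: writing $K=G_{h(\zeta)}$, it first treats the case where $K$ is normal in $G$, so that $K$ acts trivially on all of $Y$ and Proposition~\ref{Eqcores.F.1} makes the fibers of $h$ $K$-equivariantly diffeomorphic, whence any isotropy class occurring in one fiber occurs in every fiber; it then reduces the general case to this one by decomposing $Y=\bigsqcup_{K'\in[K]}Y^{K'}$ and restricting $h$ over each piece. Your approach instead works locally: you use the Tube Theorem together with the unique-isotropy hypothesis to force the slice representation to be trivial, obtaining a genuine $G$-equivariant product structure $h^{-1}(U)\cong (G\times_{G_{y_0}}F_{y_0})\times W^+$ over each tube, from which the clopen property of $h(S)$ is immediate. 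Your route avoids the normal/non-normal case split entirely and makes very transparent exactly where the unique-isotropy hypothesis enters (trivializing the slice action so that Proposition~\ref{Eqcores.F.1} applies on the slice). One small point: Proposition~\ref{Eqcores.F.1} as stated gives only that fibers are equivariantly diffeomorphic, not an equivariant trivialization; you are implicitly using that its proof (exponentiation in an invariant submersion metric) yields an equivariant product over a neighborhood, which suffices after shrinking $W^+$. Your explicit flag on connectedness of $Y$ is appropriate---the paper's proof carries the same tacit restriction, since Proposition~\ref{Eqcores.F.1} compares fibers only within a component.
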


\begin{proof} If $\zeta \in S$ then $G_\zeta$ is necessarily a
  subgroup of $K = G_{f(\zeta)}.$ If $K$ is a normal subgroup of $G,$ then
  it acts trivially on $Y,$ and acts on each fibre of $h.$
  Proposition~\ref{Eqcores.F.1}) shows that the fibers of $h$ are
  $G$-equivariantly diffeomorphic, so an isotropy group that occurs in one
  fiber occurs in every fiber, and hence $h\rest{S}$ is surjective.

If $K$ is not a normal subgroup, then the fibration $h$ decomposes into fibrations
\begin{equation*}
	h^{K'}: h^{-1}(Y^{K'}) \longrightarrow Y^{K'}, \quad K' \in [K]
\end{equation*}
and, as each of these is surjective when restricted to $S \cap
h^{-1}(Y^{K'}),$ the result follows.
\end{proof}

Now, assuming that $X_0$ is not already a full resolution of $X,$ we
proceed to resolve it. Consider all the isotropy types of the action of $G$
on $X_0$ and the bases of its boundary fibrations and select one which is
minimal (and occurs in an unresolved component of $X_0$ or the base of its 
boundary fibrations). By the discussion in \S\ref{sec:Resolution} it is 
transverse to the resoution structure and Proposition~\ref{Eqcores.126} 
shows that $X_1,$ obtained by blowing it and all of its preimages in the 
resolution structure up, has a natural resolution structure. From 
Lemma~\ref{BlowUpLemmaItFib} i) and iii) the composition $X_1 
\longrightarrow X_0 \longrightarrow Y(M)$ is an equivariant resolution
fibration. In case the minimal istropy type occurs in $X_0$ itself, it is
an interior p-submanifold and the situation is even simpler. In all cases
the fibration $f$ lifts to a resolution fibration. Thus, after a finite
number of steps the total space and its resolution structure are also resolved.

As for the canonical resolution, the fact that the blow-ups are ordered
consistently with the partial order given by inclusion of isotropy groups,
and that minimal isotropy types are necessarily disjoint, ensures that this
construction of a relative resolution is also natural.
\end{proof}

\providecommand{\bysame}{\leavevmode\hbox to3em{\hrulefill}\thinspace}
\providecommand{\MR}{\relax\ifhmode\unskip\space\fi MR }
% \MRhref is called by the amsart/book/proc definition of \MR.
\providecommand{\MRhref}[2]{%
  \href{http://www.ams.org/mathscinet-getitem?mr=#1}{#2}
}
\providecommand{\href}[2]{#2}

%\bibliography{Eqcores}
%\bibliographystyle{amsplain}

\end{document}